\newtheorem{theorem}{Theorem}[section]{\bf }{\it }
\newtheorem{lemma}[theorem]{Lemma}{\bf }{\it }
\newtheorem{corollary}[theorem]{Corollary}{\bf }{\it }
\newtheorem{remark}[theorem]{Remark}{\it }{\rm }
\newtheorem{prop}[theorem]{Proposition}{\bf }{\it }
\newtheorem{example}[theorem]{Example}{\it}{\rm }
\newtheorem{conjecture} {Conjecture}{\it}{\rm }
\newtheorem{question}[conjecture]{Question}{\it}{\rm }
\newtheorem{problem}[conjecture]{Problem}{\it}{\rm }
\newcommand{\Z}{\mathbb{Z}}
\newcommand{\F}{\mathbb{F}}
\newcommand{\N}{\mathbb{N}}
\newcommand{\CC}{\mathbb{C}}
\newcommand{\Q}{\mathbb{Q}}
\newcommand{\SL}{\rm {SL}}
\newcommand{\G}{\Gamma}
\DeclareMathOperator{\Hom}{{Hom}}
\DeclareMathOperator{\df}{def} \DeclareMathOperator{\im}{Im}
\DeclareMathOperator{\rk}{\mathrm{rk}}
\DeclareMathOperator{\cd}{\mathrm{cd}}\DeclareMathOperator{\vcd}{\mathrm{vcd}}
\newcommand{\ochi}{\overline{\chi}}
\numberwithin{equation}{section}
\begin{document}

\pagestyle{headings}
\title{Normal Subgroups of Profinite Groups of Non-negative
Deficiency
}

\author{\frame{Fritz Grunewald}, Andrei Jaikin-Zapirain,\\
Aline G.S. Pinto
~\&~ Pavel A. Zalesski
}

\maketitle

\begin{abstract}
 The principal focus of the paper is to show that the
existence of a finitely generated normal subgroup of infinite
index in a profinite group $G$ of non-negative deficiency gives
rather strong consequences for the structure of $G$. To make this
precise we introduce the notion of $p$-deficiency ($p$ a prime)
for a profinite group $G$. This concept is more useful in the
study of profinite groups then the notion of deficiency. We prove
that if the $p$-deficiency of $G$ is positive and $N$ is a
finitely generated normal subgroup such that the $p$-Sylow
subgroup of $G/N$ is  infinite and $p$ divides the order of $N$
then we have $\cd_p(G)=2$, $\cd_p(N)=1$ and $\vcd_p(G/N)=1$ for
the cohomological $p$-dimensions; moreover either the $p$-Sylow
subgroup of $G/N$ is virtually cyclic or the $p$-Sylow subgroup of
$N$ is cyclic. If $G$ is a profinite Poincar\'e duality group  of
dimension $3$ at a prime $p$ ($PD^3$-group) we show that for $N$
and $p$ as above
 either $N$ is $PD^1$ at $p$ and $G/N$ is virtually $PD^2$ at $p$
or $N$ is $PD^2$ at $p$ and $G/N$ is virtually $PD^1$ at $p$. In
particular if $G$ is pro-$p$ then either $N$ is infinite cyclic
and $G/N$ is virtually Demushkin  or $N$ is Demushkin and $G/N$ is
virtually  infinite cyclic. We apply this results to deduce
structural information on the profinite completions of ascending
HNN-extensions of free groups. We also give some implications
of our theory to the congruence kernels of certain arithmetic groups.
\\
2000. \emph{Mathematics Subject Classification}
Primary: 20E18.
\end{abstract}

\pagebreak

\tableofcontents

%%%%%%%%%%%%%%%%%%%%%%%%%%%%%%%%%%%%
\section{Introduction}
%%%%%%%%%%%%%%%%%%%%%%%%%%%%%%%%%%%%

If a connected compact manifold $M$ admits a fibration over a
compact base manifold $B$ with a compact manifold $F$ as a fiber
its fundamental group $\G=\pi_1(M)$ satisfies an exact sequence
\begin{equation}\label{eqa1}
\langle 1\rangle \to N\to \G=\pi_1(M) \to \G/N\to \langle 1\rangle
\end{equation}
where $N$, being an image of $\pi_1(F)$, is a normal subgroup
which is finitely generated as a group. Moreover there are many
interesting situations where the quotient $\G/N$ is the infinite
cyclic group $\Z$. This for example happens when $M$ fibers over
the circle.
%Our general intuition tells us that only somewhat
%special groups should have a finitely generated normal subgroup of
%infinite index.
 In fact J. Hempel and W. Jaco have proved

\begin{theorem}\label{theohemp} ({\bf Hempel, Jaco \cite{HW}})
Let $\G$ be the fundamental group of  compact 3-manifold $M^3$
(possibly with boundary)
and $N$ a finitely generated normal subgroup of $\G$ of infinite index.
Then $N$ is isomorphic to the fundamental group of a compact, possibly bounded
$2$-manifold. In case $N$ is not infinite cyclic then the quotient $\G/N$
has an infinite cyclic group of finite index.
\end{theorem}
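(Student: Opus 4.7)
The plan is to study the covering space $\pi\colon \tilde M \to M$ corresponding to $N$, so that $\pi_1(\tilde M)\cong N$ is finitely generated and the deck transformation group is $Q=G/N$, which is infinite. Because $N$ is finitely generated, the first step is to invoke Scott's compact core theorem to obtain a compact 3-submanifold $C\subset \tilde M$ such that the inclusion $C\hookrightarrow\tilde M$ is a homotopy equivalence. The translates $\{qC:q\in Q\}$ cover $\tilde M$, and the frontier $\mathrm{Fr}(C)$ separates $\tilde M$ into pieces permuted by $Q$.

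Next I would normalize the frontier. Using the loop theorem and Dehn's lemma, together with the fact that $\pi_1(C)\to\pi_1(\tilde M)$ is an isomorphism, compress and discard inessential components so that every component of $\mathrm{Fr}(C)$ is either parallel into $\partial\tilde M$ or is a two-sided incompressible surface in the interior of $\tilde M$. The sphere theorem handles $S^2$-components (after passing to the irreducible piece or using Kneser--Milnor decomposition, and observing that a non-trivial $S^2$ in $\tilde M$ would contradict $\pi_1(C)\xrightarrow{\cong}\pi_1(\tilde M)$), and disc components are removed by the loop theorem.

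The main step, and the hardest one, is to show that $\tilde M$ has a product structure: writing $F$ for a frontier component, one wants $\tilde M\cong F\times\mathbb R$ (or a line bundle over $F$ in the non-orientable case). The idea is that if $F$ is incompressible and $\pi_1(F)\to\pi_1(\tilde M)$ is surjective, then the complementary components of $F$ in $\tilde M$ are $I$-bundles over $F$; iterating over the $Q$-translates and using cocompactness of the $Q$-action on $\tilde M/\!\!\sim$ forces the $I$-bundles to fit together into a genuine $\mathbb R$-bundle over $F$. This is the classical step where one uses Waldhausen-type arguments on Haken manifolds, and the case analysis for $\partial M\ne\emptyset$ and for non-orientable frontiers is where essentially all the work sits. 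Once such a product structure is established, $N=\pi_1(\tilde M)=\pi_1(F)$ is the fundamental group of a compact $2$-manifold.

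For the final assertion, suppose $N$ is not infinite cyclic, so $F$ is not $S^1$ or an interval. Then the product structure $\tilde M\cong F\times\mathbb R$ is unique up to isotopy (here one uses that for such $F$ the manifold $F\times\mathbb R$ has essentially one fibration by incompressible surfaces, a consequence of Waldhausen's theorem on homotopy equivalences of Haken manifolds). The action of $Q$ by deck transformations must preserve this fibration up to isotopy, and so descends to a properly discontinuous cocompact action of $Q$ on $\mathbb R$, yielding a homomorphism $Q\to\mathrm{Isom}(\mathbb R)$ whose image is a discrete cocompact subgroup and hence contains $\mathbb Z$ with index at most $2$. The kernel acts trivially on the $\mathbb R$-factor and hence, by cocompactness of $C$, must be finite; combining these gives an infinite cyclic subgroup of finite index in $Q=G/N$, as required.
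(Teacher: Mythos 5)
You should first note that the paper itself offers no proof of this statement: Theorem \ref{theohemp} is quoted verbatim from Hempel--Jaco \cite{HW} as background, so the only meaningful comparison is with the classical 3-manifold argument, and your outline follows that general line (compact core, incompressible frontier surfaces, a product structure on the cover, then an induced action on $\mathbb{R}$).

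The difficulty is that the step you label as ``the main step, and the hardest one'' is precisely where the theorem lives, and your sketch does not supply it. You assume that some frontier component $F$ of the core is incompressible \emph{and} that $\pi_1(F)\to\pi_1(\tilde M)=N$ is surjective; nothing in the preceding compression argument gives surjectivity, and establishing that $N$ is carried by a single properly embedded surface (equivalently, that $\tilde M$ deformation retracts to such an $F$, or in the closed-surface case that $\tilde M\cong F\times\mathbb{R}$) is exactly the content of Hempel--Jaco's work; citing ``Waldhausen-type arguments'' does not close this. Two of your auxiliary claims are also wrong or unjustified as stated: an essential $2$-sphere in $\tilde M$ does \emph{not} contradict $\pi_1(C)\xrightarrow{\cong}\pi_1(\tilde M)$ (it only shows $\pi_2\neq 0$), so the reduction of the reducible case needs the Kneser--Milnor decomposition together with a Kurosh-type analysis of the finitely generated normal subgroup $N$ of a free product --- and in that case $N$ may well be free, so no product structure $F\times\mathbb{R}$ exists, yet the second assertion (that $G/N$ is virtually $\mathbb{Z}$ when $N$ is not infinite cyclic) must still be proved for such $N$, which your argument does not address. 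Likewise the translates $qC$, $q\in Q$, of a Scott core need not cover $\tilde M$ (the core need not project onto $M$), and in the final step the finiteness of the kernel of $Q\to\mathrm{Isom}(\mathbb{R})$ requires an argument from properness of the deck action, not merely ``cocompactness of $C$''; a cleaner route there, once $\tilde M\cong F\times\mathbb{R}$ with $F$ compact is in hand, is Milnor--\v{S}varc: $Q$ acts properly and cocompactly on a space quasi-isometric to $\mathbb{R}$, hence is virtually $\mathbb{Z}$. As it stands, then, the proposal is a reasonable roadmap but has genuine gaps at the surjectivity/product-structure step and in the reducible and free-$N$ cases.
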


Furthermore, Hempel and Jaco add some results on the geometric
structure of the manifold $M^3$. The group theoretic result in
Theorem \ref{theohemp} obtained a group theoretic proof by J.
Hillman in \cite{Hilman} which also provided an important
generalization. In fact J.  Hillman proved Theorem \ref{theohemp}
for Poincar\'e duality groups $\G$ of dimension $3$. This
generalizes the case of a compact $3$-manifold $M^3$ without
boundary. Hillman \cite{Hilman} was also able to give the cases
where $M^3$ is a compact $3$-manifold with boundary an appropriate
generalization.

In this paper we
\begin{itemize}
\item prove results analogous to Theorem \ref{theohemp} for
profinite groups, \item study applications of our profinite
results to discrete groups.
\end{itemize}

We for example establish in Section \ref{PDdrei} the following
profinite version of the result of Hillman:

\begin{theorem}\label{thpd3}
Let $G$ be a profinite $PD^3$-group at a prime $p$  and $N$ be a
finitely generated normal  subgroup of $G$ such that the $p$-Sylow $(G/N)_p$ is
infinite and $p$ divides $|N|$. Then either $N$ is $PD^1$ at $p$
and $G/N$ is virtually $PD^2$ at $p$ or $N$ is $PD^2$ at $p$ and
$G/N$ is virtually  $PD^1$ at $p$.
\end{theorem}

The pro-$p$ version of this theorem   reads as follows:

\begin{corollary} Let $G$ be a pro-$p$  $PD^3$-group and $N$ be a
non-trivial finitely generated normal  subgroup of $G$ of infinite index. Then
either $N$ is infinite cyclic  and $G/N$ is virtually Demushkin
or $N$ is Demushkin and $G/N$ is virtually  infinite cyclic.
\end{corollary}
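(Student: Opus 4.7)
The plan is to derive this corollary as a direct specialization of the preceding theorem to the pro-$p$ setting. First I would record the trivial hypotheses implicit in the statement: if $N$ is trivial or of finite index in $G$, then neither alternative can hold (in the finite-index case $N$ would itself be $PD^3$ at $p$, not $PD^1$ or $PD^2$), so we may assume $N$ is nontrivial and has infinite index. Since $G$ is pro-$p$, so is every quotient and every closed subgroup, hence $(G/N)_p = G/N$ and $|N|$ is a (nontrivial) power of $p$. Therefore the hypotheses of the preceding theorem --- namely that $(G/N)_p$ is infinite and that $p$ divides $|N|$ --- reduce here to $N$ being nontrivial and of infinite index, which we have just arranged.

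Next I would apply the theorem, which hands us the dichotomy: either $N$ is $PD^1$ at $p$ with $G/N$ virtually $PD^2$ at $p$, or $N$ is $PD^2$ at $p$ with $G/N$ virtually $PD^1$ at $p$. The remaining work is purely a matter of translating these profinite $PD^n$-at-$p$ conditions into the pro-$p$ vocabulary of the corollary. A pro-$p$ group which is $PD^1$ at $p$ is well known to be $\Z_p$, i.e.\ infinite cyclic (as a pro-$p$ group), and a pro-$p$ group which is $PD^2$ at $p$ is by definition a Demushkin group. This handles $N$ in both branches.

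For the quotient, I need to see that ``virtually $PD^n$ at $p$'' for a pro-$p$ group $G/N$ is equivalent to ``virtually $\Z_p$'' (when $n=1$) or ``virtually Demushkin'' (when $n=2$). For this I would pick an open subgroup $H \le G/N$ of finite index which is $PD^n$ at $p$; because $G/N$ is pro-$p$, $H$ is automatically pro-$p$, so by the translation above $H$ is either infinite cyclic or Demushkin. This produces the two conclusions of the corollary. The only nontrivial ingredient is the identification of pro-$p$ $PD^1$-groups with $\Z_p$ and of pro-$p$ $PD^2$-groups with Demushkin groups, both of which are standard; the rest is simply inserting $(G/N)_p = G/N$ into the hypotheses of the theorem. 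I do not expect any serious obstacle beyond making sure these definitional translations are invoked correctly and that the degenerate cases ($N$ trivial or of finite index) are acknowledged up front.
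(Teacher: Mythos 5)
Your proposal is correct and follows exactly the route the paper intends: the corollary is the specialization of the profinite $PD^3$ theorem to the pro-$p$ case, where $(G/N)_p=G/N$ and $p\mid|N|$ amount to $N$ being nontrivial of infinite index, combined with the standard identifications (quoted in Section \ref{PDdrei} of the paper) that $\Z_p$ is the only pro-$p$ $PD^1$-group and that pro-$p$ $PD^2$-groups are precisely the Demushkin groups. Your explicit handling of the degenerate cases and of open subgroups of the pro-$p$ quotient matches the implicit reading of the statement, so there is nothing to correct.
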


To explain the content of this paper in more detail we have to introduce
some concepts from group theory.
The {\it deficiency}  $\df(G)$ of a group $G$
is the largest integer $k$ such that there exist a (finite)
presentation of $G$ with the number
of generators minus the number of relations equal to $k$.
The groups of non-negative deficiency form an
important class of finitely presented groups.
It contains many important families of
examples coming from geometry: fundamental groups
of compact 3-manifolds, knot groups, arithmetic groups of rank 1, etc.

In this paper we  study   profinite groups of
non-negative deficiency. The deficiency for profinite groups is
defined in the same way as for discrete groups.
We consider presentations
in the category of profinite groups, i.e., saying
generators we mean topological generators. We note that any
presentation of a group is a presentation of its profinite
completion and so the profinite completion of a group of
non-negative deficiency  is a profinite group of non-negative
deficiency. Therefore, the profinite completions of groups
mentioned above are in the range of our study.  Moreover, as was shown by
Lubotzky \cite{Lu}, Corollary 1.2, all projective groups (i.e.
profinite groups of cohomological dimension 1)   have non-negative
deficiency as well.

The principal focus of our study here is to show that the existence of
a finitely generated normal subgroups of infinite index in a
profinite group $G$ of non-negative deficiency gives
rather strong consequences for the structure of $G$.
We are ready to state the principal results for groups of positive deficiency.

\begin{theorem}\label{tmaintro}
Let $G$ be a finitely generated profinite group with positive deficiency
and $N$ a finitely generated normal subgroup such that the $p$-Sylow
subgroup $(G/N)_p$ is  infinite and $p$ divides the order of
$N$. Then   either the $p$-Sylow subgroup of $G/N$ is
virtually cyclic or the $p$-Sylow subgroup of $N$ is cyclic.
Moreover, $\cd_p(G)=2$, $\cd_p(N)=1$ and $\vcd_p(G/N)=1$, where
$\cd_p$ and $\vcd_p$ stand for cohomological $p$-dimension and virtual
 cohomological $p$-dimension respectively.
\end{theorem}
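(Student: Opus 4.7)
My plan is to combine the Lyndon--Hochschild--Serre spectral sequence at the prime $p$ with the cohomological constraints coming from the $p$-deficiency theory that the paper advertises in its abstract. Positive deficiency of $G$ forces positive $p$-deficiency for every $p$, and it is the latter notion that is actually adapted to profinite cohomology. As a first step I would establish $\cd_p(G)=2$. The hypotheses $p\mid |N|$ and $|(G/N)_p|=\infty$ give $\cd_p(N)\ge 1$ and $\cd_p(G/N)\ge 1$ immediately. Since $N$ is finitely generated, $H^1(N,\F_p)$ is a finitely generated $G/N$-module, so either $E_2^{1,1}=H^1(G/N,H^1(N,\F_p))$ or $E_2^{2,0}=H^2(G/N,\F_p)$ is nontrivial, forcing $\cd_p(G)\ge 2$. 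The reverse bound $\cd_p(G)\le 2$ is exactly what positive $p$-deficiency is designed to deliver (killing $H^i(G,\F_p)$ for $i\ge 3$), and I would appeal to the $p$-deficiency machinery developed earlier in the paper for this.

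Next I analyse the whole spectral sequence $E_2^{a,b}=H^a(G/N,H^b(N,\F_p))\Rightarrow H^{a+b}(G,\F_p)$. Because $\cd_p(G)=2$, every term with $a+b\ge 3$ must be killed in the abutment. The positive $p$-deficiency of $G$ gives the quantitative inequality $\dim_{\F_p}H^2(G,\F_p)<\dim_{\F_p}H^1(G,\F_p)$, and, more importantly, applying the same estimate to every open subgroup $U$ of $G$ containing $N$ (using a Schreier-type additivity of $p$-deficiency on finite-index subgroups), one gets a growth bound along a tower of such $U$. Reading this simultaneously against the $b\ge 2$ rows and the $a\ge 2$ columns of the $E_2$-page pins down $\cd_p(N)\le 1$ and $\vcd_p(G/N)\le 1$; combined with the lower bounds of the previous step, all three cohomological dimensions are as stated. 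It follows that the Sylow pro-$p$ of $N$ is free pro-$p$ (Tate--Serre), and that $(G/N)_p$ is virtually free pro-$p$.

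For the cyclicity dichotomy I return to the term $E_2^{1,1}=H^1(G/N,H^1(N,\F_p))$. Suppose toward a contradiction that the rank of $N_p$ is $r\ge 2$ and $(G/N)_p$ is not virtually cyclic, so its virtual rank is $s\ge 2$. Passing to a cofinal tower of open normal subgroups $U\triangleleft G$ containing $N$ and replacing $G/N$ by a finite-index normal subgroup that is genuinely free pro-$p$ on $s$ generators, the dimension of the $(1,1)$-term grows like a product of the two ranks and hence at least linearly in $[G:U]$, whereas the positive $p$-deficiency bound shows that the contribution of degree~$2$ cohomology to $H^2(U,\F_p)$ grows only sublinearly along the tower. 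This incompatibility forces $r=1$ or $s=1$, yielding the dichotomy in the theorem.

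The main obstacle, as I see it, is the last step: matching the growth of $E_2^{1,1}$ along the tower against the $p$-deficiency bound. One has to control the differentials into and out of $E_2^{1,1}$ to guarantee that a definite portion of it survives to $E_\infty^{1,1}$, verify the finiteness hypotheses required for multiplicativity of the mod-$p$ Euler characteristic in the extensions $1\to N\to U\to U/N\to 1$, and pass cleanly from the virtual freeness of $(G/N)_p$ to an honest K\"unneth-type calculation via a finite-index subgroup. All of the earlier steps, by contrast, are bookkeeping on the spectral sequence given the $p$-deficiency estimates.
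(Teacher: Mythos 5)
There is a genuine gap, and it sits at the hardest point of the theorem. You assert that the bound $\cd_p(G)\le 2$ ``is exactly what positive $p$-deficiency is designed to deliver''. That is false: $\df_p(G)\ge 1$ by itself imposes no bound on $\cd_p(G)$. For instance, the free profinite product of $\Z_p^3$ with a free profinite group of large rank has $\ochi_2(G,M)\ge 0$ for every finite $p$-primary module $M$ (so positive $p$-deficiency) but $\cd_p=3$. In the paper the inequality $\cd_p(G)\le 2$ is obtained only by using the normal subgroup $N$ itself: Proposition \ref{fgns} (whose engine is the splitting trick of Corollary \ref{consLHS}(6), available because $N$ is finitely generated, so $N^p[N,N]$ is open in $N$ and one may pass to $V=JN$ with $J\cap N=N^p[N,N]$) produces inside every open subgroup an open $U$ with $\ochi_2(U,\F_p)\le 0$, hence $=0$ by the deficiency bound; then Proposition \ref{critcd2} feeds this into a dimension count on the partial free resolution coming from a deficiency-one presentation and concludes that the third syzygy vanishes. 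Neither ingredient appears in your sketch. Likewise, $\cd_p(N)\le 1$ and $\vcd_p(G/N)\le 1$ are not ``bookkeeping on the $E_2$-page given $\cd_p(G)=2$'': the paper proves $H^2(W\cap N,\F_p)=0$ for all open $W$ by the quantitative argument of Theorem \ref{def1} and Proposition \ref{restr}, showing that a class of $H^2(N,\F_p)$ surviving restriction from an open subgroup forces $d_p(H,N)-1$ to grow proportionally to $[G:H]_p$, which contradicts the boundedness of $d_p(H,N)$ coming from finite generation of $N$ together with the virtual cyclicity of $(G/N)_p$ already established.

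For the dichotomy you do identify the right quantity, namely $H^1(U/N,H^1(N,\F_p))$ measured against the deficiency constraint, and this is indeed the idea behind Proposition \ref{fgns}. But the obstacle you flag yourself --- controlling the differentials into and out of $E_2^{1,1}$ along a tower --- is exactly the missing idea, and the paper removes it without any tower: after passing to $V=JN$ as above, the comparison extension $1\to N/N^p[N,N]\to V/N^p[N,N]\to V/N\to 1$ is a direct product, and naturality of the spectral sequence forces $d_2^{0,1}=d_2^{1,1}=0$, so $H^2(U/N,\F_p)\oplus H^1(U/N,H^1(N,\F_p))$ embeds into $H^2(U,\F_p)$ for a single $U$; combined with $\ochi_2(U,\F_p)\ge \df_p(U)-1\ge 0$ (Lemma \ref{cof}) this gives $\ochi_1(U/N,\F_p)\le 0$ at once. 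Your asymptotic variant also leans on a wrong heuristic: positive $p$-deficiency only gives $\dim H^2(U,\F_p)\le \dim H^1(U,\F_p)-1$, and $\dim H^1(U,\F_p)$ itself may grow linearly in $[G:U]$, so there is no ``sublinear'' growth of degree-two cohomology to play against the linear growth of $E_2^{1,1}$; the contradiction has to come from the exact inequality of Proposition \ref{fgns}, not from growth rates.
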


To prove this theorem we introduce in Section \ref{defi}
the concept of $p$-deficiency
$\df_p(G)$ for a prime $p$ and a profinite group $G$. These new invariants
are more suitable to the study of profinite groups then just the deficiency.
The example of pro-$p$ groups (which all have non-positive deficiency as
profinite groups) already makes it clear that our result requires a more
subtle approach.
Section \ref{defi} contains also results interconnecting the deficiency
of a profinite group with its various $p$-deficiencies.

Theorem \ref{tmaintro} has the following immediate consequence.

\begin{corollary}\label{profiniteintro}
Let $G$ be a finitely generated profinite group of positive
deficiency and $N$ a finitely generated normal subgroup of $G$
such that the $p$-Sylow subgroup $(G/N)_p$ is infinite whenever
the prime $p$ divides $|N|$. Then $N$ is projective.
\end{corollary}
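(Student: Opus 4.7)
The plan is to reduce the claim to a prime-by-prime verification of cohomological dimension and then invoke Theorem \ref{tmaintro} directly.

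First, I would recall the standard characterization that a profinite group $N$ is projective if and only if $\cd(N)\le 1$, and that $\cd(N)=\sup_p \cd_p(N)$ where the supremum runs over all primes $p$. Thus it suffices to establish $\cd_p(N)\le 1$ for every prime $p$.

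Next, I split into two cases according to whether $p$ divides the order of $N$. If $p\nmid |N|$, then the $p$-Sylow subgroup of $N$ is trivial, so $\cd_p(N)=0$ with nothing further to prove. If $p$ divides $|N|$, then by hypothesis the $p$-Sylow $(G/N)_p$ is infinite, and since $N$ is a finitely generated normal subgroup of the finitely generated profinite group $G$ of positive deficiency, the hypotheses of Theorem \ref{tmaintro} are satisfied. Applying that theorem yields $\cd_p(N)=1$.

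Combining the two cases, $\cd_p(N)\le 1$ for every prime $p$, hence $\cd(N)\le 1$ and $N$ is projective. The work is thus entirely contained in Theorem \ref{tmaintro}; the only nontrivial point of this corollary is recognizing that the hypothesis ``$(G/N)_p$ is infinite whenever $p\mid |N|$'' is exactly what is needed to apply the theorem for each prime dividing $|N|$, while the remaining primes are handled trivially. There is no real obstacle here beyond correctly assembling these pieces.
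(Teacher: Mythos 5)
Your proof is correct and follows exactly the route the paper intends: the paper's own proof of this corollary is simply ``by the previous theorem'' (Theorem \ref{tmaintro}/\ref{tma}), with the implicit prime-by-prime reduction that you spell out — $\cd_p(N)=0$ trivially when $p\nmid|N|$, and $\cd_p(N)=1$ from the theorem when $p\mid|N|$, so $\cd(N)\le 1$ and $N$ is projective. Your write-up just makes explicit the standard facts ($N$ projective iff $\cd(N)\le 1$, and $\cd=\sup_p\cd_p$) that the paper leaves unstated.
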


The  pro-$p$ version of of Theorem \ref{tmaintro} is proved in the paper \cite{HS} of Hillmann
and Schmidt and its abstract version for a discrete group $\Gamma$ is known only under
further restrictions either on $N$ or on $\G/N$ (see \cite{Bieri}).

A group $\G$ is called knot-like if $\G/[\G,\G]$ is infinite
cyclic and the deficiency satisfies ${\rm def}(\G)=1$. These two
properties are possessed by any knot group, i.e., the fundamental
group of the complement  of a knot in the $3$-sphere $S^3$. It was
conjectured by E. Rapaport-Strasser in \cite{rapap} that if the
commutator group $\G'=[\G,\G]$ of a knot-like group $\G$ is
finitely generated then $\G'$ should be free. This conjecture is
true as it was proved by D.H. Kochloukova in \cite{K2}.

The next corollary shows that the profinite version of the
Rapaport-Strasser conjecture is also true. In fact our result is
stronger then just the profinite version of the conjecture since
we  do not assume $G/[G,G]$ to be cyclic and assume positive deficiency
rather than for deficiency one.

\begin{corollary}\label{coraintro}  Let $G$ be a finitely generated profinite
group of positive deficiency whose commutator subgroup $[G,G]$ is
 finitely generated. Then $\df(G)=1$ and $[G,G]$ is projective. Moreover,
 $\cd(G)=2$
unless $G=\widehat{\mathbb{Z}}$. \end{corollary}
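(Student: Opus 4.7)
The strategy is to apply Corollary \ref{profiniteintro} and Theorem \ref{tmaintro} to the finitely generated normal subgroup $N:=[G,G]$, whose quotient $G/N$ is abelian. First I verify the infinite-Sylow hypothesis of Corollary \ref{profiniteintro}. From any finite presentation of $G$ with $d$ generators and $r$ relations, $G/N$ is a quotient of $\widehat{\mathbb{Z}}^d$ by an $r$-generated $\widehat{\mathbb{Z}}$-submodule; in particular $(G/N)_p$ is a quotient of $\mathbb{Z}_p^d$ by at most $r$ relations, so $\dim_{\mathbb{Q}_p}\bigl((G/N)_p\otimes_{\mathbb{Z}_p}\mathbb{Q}_p\bigr)\geq d-r\geq \df(G)\geq 1$ for every prime $p$. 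Thus $(G/N)_p$ is infinite at every prime, Corollary \ref{profiniteintro} applies, and $[G,G]$ is projective.

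Next I deduce $\df(G)=1$. If $N$ is nontrivial, pick a prime $p$ dividing $|N|$ and apply Theorem \ref{tmaintro}: it yields $\cd_p(G)=2$ and $\vcd_p(G/N)=1$. Since $G/N$ is abelian, the second conclusion forces $(G/N)_p$ to be virtually $\mathbb{Z}_p$, so $\dim_{\mathbb{Q}_p}((G/N)_p\otimes\mathbb{Q}_p)=1$; combined with the lower bound above this gives $\df(G)\leq 1$, hence $\df(G)=1$. If instead $N=1$ then $G$ is abelian, and the inequality $\df(G)\leq \df_p(G)$ from Section \ref{defi}, combined with a direct cohomology computation for abelian pro-$p$ groups, forces each Sylow $G_p\in\{\mathbb{Z}_p,\mathbb{Z}_p^2\}$; each such $G_p$ has $\df_p=1$, so again $\df(G)=1$.

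For $\cd(G)$, I would invoke the interconnection results of Section \ref{defi}: positive total deficiency implies positive $q$-deficiency at every prime $q$, which in turn yields $\cd_q(G)\leq 2$ at every $q$. Together with $\cd_p(G)=2$ (from Theorem \ref{tmaintro} when $N\neq 1$, or from the presence of some $G_p=\mathbb{Z}_p^2$ in the abelian case), this gives $\cd(G)=2$, except when every Sylow $G_p$ equals $\mathbb{Z}_p$ and $N$ is trivial, i.e.\ when $G=\widehat{\mathbb{Z}}$ and $\cd(G)=1$.

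The main obstacle I anticipate is securing the upper bound $\cd_q(G)\leq 2$ at primes $q$ not dividing $|N|$: Theorem \ref{tmaintro} only controls $\cd_p$ at primes where the $p$-Sylow of $N$ is nontrivial, so this bound must come from the $p$-deficiency framework of Section \ref{defi} rather than from the main theorem directly. The remaining steps are direct assemblies of Theorem \ref{tmaintro} and Corollary \ref{profiniteintro}.
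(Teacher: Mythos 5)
Your outline is sound up to its last step, and in one place cleaner than the paper's: the rank bound $\dim_{\Q_p}\bigl((G/N)_p\otimes_{\Z_p}\Q_p\bigr)\ge \df(G)\ge 1$ is a neat substitute for the paper's route through the maximal pro-$p$ quotient and the lemma from \cite{LS}, and your deduction of $\df(G)\le 1$ from $\vcd_p(G/N)=1$, as well as the abelian case, are correct. The genuine gap is exactly the step you flagged: you close the upper bound $\cd_q(G)\le 2$ at primes $q$ not dividing $|[G,G]|$ by asserting that positive $q$-deficiency yields $\cd_q(G)\le 2$. Section \ref{defi} contains no such statement, and the implication is false in general: the profinite completion of $\Z/q\ast\Z$ admits a presentation with two generators and one relation, hence has $\df\ge 1$ and, by (\ref{p_def}), $\df_q\ge 1$, yet it contains an element of order $q$ and therefore has $\cd_q=\infty$. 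The only criterion of this kind in the paper is Proposition \ref{critcd2}, which needs the extra hypothesis that every open subgroup contains an open $U$ with $\ochi_2(U,\F_q)=0$; in the proof of Theorem \ref{tma} that hypothesis is verified using that both $|N|_q$ and $|G/N|_q$ are infinite, which is precisely what is unavailable at the primes you are worried about.

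The step can be repaired with tools already in the paper, and this is in effect what the paper's own (terse) proof does. For $q\nmid |N|$, where $N=[G,G]$, one has $\cd_q(N)=0$, so as in the proof of Corollary \ref{corwz} (via Corollary \ref{consLHS}(2)) $\cd_q(G)=\cd_q(G/N)$, and this equals $\cd_q$ of the $q$-Sylow subgroup $(G/N)_q$. Since $N$ has no nontrivial pro-$q$ quotient, $(G/N)_q$ is the maximal pro-$q$ quotient $G_{[q]}$ of $G$; Lemma \ref{propquot} together with $\df_q(G)\ge\df(G)\ge 1$ gives $\ochi_2(G_{[q]},\F_q)\ge \ochi_2(G,\F_q)\ge 0$, and your ``direct cohomology computation for abelian pro-$q$ groups'' (the same one you use when $G$ is abelian) then forces $(G/N)_q\cong\Z_q$ or $\Z_q\times\Z_q$, hence $\cd_q(G)\le 2$. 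With that substitution your argument assembles correctly and is essentially the paper's: the paper likewise splits into the abelian and non-abelian cases, shows $(G/N)_p$ is infinite for every $p$, and applies Theorem \ref{tma} to $N=[G,G]$; your presentation-rank argument for the infinitude of $(G/N)_p$ and your use of $\vcd_p(G/N)=1$ to bound the deficiency are valid minor variants.
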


%We observe that any pro-$p$ group has non-positive deficiency  as
%a profinite group (see \cite{Lu}). However it is worthwhile to
%note that a knot-like pro-$p$ group, i.e., a pro-$p$ group with
%infinite cyclic abelianization and deficiency equal to one as a
%pro-$p$ group, is generated by one element modulo its Frattini subgroup and
%therefore is cyclic.

The next class of groups where results apply are ascending
HNN-extensions of free groups (also known as mapping tori of free
group endomorphisms). Groups of this type often appear in group
theory and topology and were extensively studied (see \cite{FH},
\cite{BS} for example). In particular, many one-relator groups are
ascending HNN-extensions of free groups and many of such  groups
are hyperbolic. Corollary \ref{profiniteintro} allows to establish
the structure of the profinite completion of this important class
of groups.

\begin{theorem} Let $F=F(x_1,\ldots x_n)$ be a free group of finite rank
$n$ and $f:F\longrightarrow F$ an endomorphism. Let
 $\G=\langle F,t\mid x_i^t=f(x_i)\rangle$ be the HNN-extension.
Then the
profinite completion of $\G$ is $\widehat \G=P\rtimes
\widehat{\mathbb{Z}}$, where $P$ is projective.
$P$ is free profinite of rank $n$ if and only if $f$ is an automorphism.
\end{theorem}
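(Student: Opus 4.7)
My plan is to reduce the projectivity of $P$ to Corollary \ref{profiniteintro} applied to $\widehat{G}$, and then to extract the rank-$n$ freeness criterion from the $t$-action on the closure of $F$ inside $\widehat{G}$. First I observe that the presentation $\langle x_1,\ldots,x_n,t\mid x_i^t=f(x_i),\ 1\le i\le n\rangle$ uses $n+1$ generators and $n$ relators, so $\df(\widehat{G})\ge 1>0$. The abelianising surjection $G\twoheadrightarrow\mathbb{Z}$ sending each $x_i\mapsto 0$ and $t\mapsto 1$ extends by functoriality to $\pi\colon\widehat{G}\twoheadrightarrow\widehat{\mathbb{Z}}$; the closed subgroup $\overline{\langle t\rangle}\cong\widehat{\mathbb{Z}}$ inside $\widehat{G}$ is a continuous section, yielding the semidirect decomposition $\widehat{G}\cong P\rtimes\widehat{\mathbb{Z}}$ with $P:=\ker\pi$.

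The crux---and the place where I expect the real work to lie---is verifying that $P$ is topologically finitely generated so that Corollary \ref{profiniteintro} applies. Here the following observation sidesteps any need to control the ascending union $\bigcup_{k\ge 0}t^kFt^{-k}$: in any finite quotient $\bar G$ of $\widehat{G}$, the relation $\bar t^{-1}\bar F\bar t\subseteq\bar F$ (a direct consequence of $t^{-1}x_it=f(x_i)\in F$) must be an equality by finiteness of $\bar F$, so $\bar F$ is $\bar t$-invariant in $\bar G$ and hence normal there; consequently the normal closure of $\bar F$ in $\bar G$ equals $\bar F$ itself. Passing to the inverse limit shows that the closure $\overline{F}$ of $F$ in $\widehat{G}$ already coincides with $P$, so $P=\overline{\langle x_1,\ldots,x_n\rangle}$ is topologically $n$-generated (in particular $d(P)\le n$). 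Since $(\widehat{G}/P)_p=\mathbb{Z}_p$ is infinite for every prime $p$, Corollary \ref{profiniteintro} now applies to $(\widehat{G},P)$ and delivers that $P$ is projective.

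For the rank criterion, if $f\in\operatorname{Aut}(F)$ then $G=F\rtimes_f\mathbb{Z}$; every finite-index subgroup of a finitely generated free group contains a characteristic (hence $f$-invariant) finite-index subgroup, so the profinite topology on $F$ induced from $G$ coincides with the intrinsic one, giving $\widehat{G}\cong\widehat{F}\rtimes\widehat{\mathbb{Z}}$ and $P=\widehat{F}$ free profinite of rank $n$. Conversely, assume $P$ is free profinite of rank $n$. Since $P=\overline{F}$ is a continuous quotient of the free profinite group $\widehat{F}$ of the same rank $n$, the canonical surjection $\widehat{F}\twoheadrightarrow P$ is an isomorphism by Hopficity of finitely generated profinite groups; the $t^{-1}$-conjugation on $P=\widehat{F}$ then realises the canonical continuous extension $\widehat{f}\colon\widehat{F}\to\widehat{F}$ of $f$ and must be an automorphism. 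Hall's subgroup-separability of $F$ then forces $f(F)$ to have finite index in $F$; Schreier's formula (for $n\ge 2$) or a direct check (for $n=1$) gives $f(F)=F$, and Hopficity of $F_n$ promotes $f$ to an automorphism.
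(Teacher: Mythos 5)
Your proposal is correct and follows essentially the same route as the paper: you show the closure of $F$ is normal (hence equals the kernel of the map onto $\widehat{\mathbb{Z}}$) by the same finite-quotient order argument, and you obtain projectivity of $P$ from positive deficiency via Corollary \ref{profiniteintro}, exactly as in the paper. For the rank criterion you use the same two ingredients as the paper --- M.~Hall's theorem that finitely generated subgroups of $F$ are closed in the profinite topology, together with Hopficity --- merely packaged in the direct rather than the contrapositive direction (from freeness of $P$ you deduce that $\widehat f$ is an automorphism and descend to $f$ via Schreier's formula and Hopficity of $F$, whereas the paper shows that when $f(F)\neq F$ the induced profinite topology on $F$ is strictly coarser, so $P$ cannot be free of rank $n$).
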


As a corollary we obtain
the following surprising consequence.

\begin{theorem} An ascending HNN-extension $\G$ of a
free group is good.\end{theorem}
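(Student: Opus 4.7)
The plan is to exhibit a natural short exact sequence $1\to N\to G\to \Z\to 1$, verify that both $N$ and $\Z$ are good in Serre's sense, and then invoke the goodness criterion \cite[Corollary 4.16]{L} for extensions, feeding in the structural information about $\widehat G$ supplied by the preceding theorem. First, the map $\varphi\colon G\to \Z$ defined by $\varphi(t)=1$ and $\varphi(x_i)=0$ is well defined, since each defining relator $t^{-1}x_i t\,f(x_i)^{-1}$ lies in its kernel. Its kernel is
\[
N\;=\;\bigcup_{n\ge 0} t^{-n}F t^{n},
\]
an ascending union of finitely generated free groups, with transition maps being the embeddings of $F$ into itself induced by $f$. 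Thus $N$ is countable and locally free, hence good: finitely generated free groups are good with $\cd\le 1$, and goodness passes to countable directed limits of good groups of cohomological dimension at most one.

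With $N$ and $\Z$ both good, the criterion \cite[Corollary 4.16]{L} applied to the sequence $1\to N\to G\to \Z\to 1$ will deliver goodness of $G$ as soon as we know that the induced sequence of profinite completions
\[
1\longrightarrow \widehat N\longrightarrow \widehat G\longrightarrow \widehat{\Z}\longrightarrow 1
\]
is exact. This is precisely what the previous theorem should provide: it asserts $\widehat G=P\rtimes \widehat{\Z}$ with $P$ projective, so $P=\ker(\widehat G\to \widehat{\Z})$, and the image of $N$ is dense in $P$. Consequently there is a canonical surjection $\widehat N\twoheadrightarrow P$, and since $\cd(P)\le 1$ by projectivity, one expects this surjection to be an isomorphism by a cohomological-dimension comparison with the locally free (hence $\cd\le 1$) group $N$.

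The hard part will be this final identification $\widehat N\cong P$, i.e.\ showing that the profinite topology on $N$ coincides with the topology induced from $G$ --- equivalently, a suitable residual finiteness of $N$ inside $G$. This is not automatic when $f$ is merely an endomorphism rather than an automorphism, and ultimately has to be read off from the semidirect-product decomposition $\widehat G=P\rtimes\widehat{\Z}$ together with the projectivity of $P$ recorded in the previous theorem. Once that identification is in place, the extension criterion applies verbatim and $G$ is good.
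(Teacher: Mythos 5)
Your plan stalls exactly where you say it does, and that gap is not fillable from the previous theorem. The exactness of $1\to\widehat N\to\widehat G\to\widehat\Z\to 1$, i.e.\ that $G$ induces the full profinite topology on $N=\bigcup_{k\ge0}t^kFt^{-k}$, is nowhere provided by Theorem \ref{HNN}: that theorem only identifies the closure $P$ of $N$ in $\widehat G$ and proves it is projective; it gives no surjection-is-injection statement for $\widehat N\to P$, and its own proof shows (via M.~Hall's theorem) that in the strictly ascending case the induced topology on $F$ is \emph{strictly weaker} than the full profinite topology, so precisely this kind of claim is the delicate point and cannot be ``read off'' from the decomposition $\widehat G=P\rtimes\widehat\Z$. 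Moreover, even if that exactness were granted, your remaining steps do not go through as stated. When $f$ is not an automorphism, $N$ is infinitely generated, and any extension criterion for goodness needs $H^q(N,M)$ to be finite for all finite $G$-modules $M$ in order to compare the two Lyndon--Hochschild--Serre spectral sequences; this is not established. Your justification that $N$ is good also rests on a false assertion: a countable locally free group need not have $\cd\le1$. For the basic example $G=\langle x,t\mid x^t=x^2\rangle$ treated in the paper, the kernel is $N\cong\Z[1/2]$, which is locally free but not free, hence $\cd(N)=2$ by Stallings--Swan; and the claim that goodness passes to countable ascending unions of good groups is itself unproved. Finally, \cite[Corollary 4.16]{L} is not an extension criterion of the kind you invoke.

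For contrast, the intended argument avoids $N$ and the completion-exactness question entirely: by Theorem \ref{HNN}, $\widehat G=P\rtimes\widehat\Z$ with $P$ projective, so $\cd(P)\le1$ and hence $\cd(\widehat G)\le2$, while $\cd(G)\le2$ by Mayer--Vietoris; the result of \cite{L} quoted in the paper states directly that $H^n(\widehat G,M)\to H^n(G,M)$ is an isomorphism for $n=1,2$ and every finite $G$-module $M$. Since both sides vanish for $n\ge3$ by the dimension bounds and $n=0$ is trivial, goodness follows at once. If you want to salvage your route, you would have to prove the induced-topology statement for $N$ and the finiteness of $H^q(N,M)$ separately, which is substantially harder than the theorem itself.
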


 The group $\G$ is called {\it $p$-good} if the
homomorphism of cohomology groups
$$H^n(\widehat \G,M)\rightarrow H^n( \G,M)$$
induced by the natural homomorphism $\G\rightarrow \widehat \G$ of
$\G$ to its profinite completion $\widehat \G$ is an isomorphism
for every finite $p$-primary $\G$-module $M$.  The group $\G$ is
called {\it good} if is $p$-good for every prime $p$.  This
important concept was introduced by J-P. Serre in \cite[Section
I.2.6]{Serre}. In his book Serre explains the fundamental role
that goodness plays in the comparison of properties of a group and
its profinite completion. In Section 5 we prove that all
arithmetic Kleinian groups are good.

\begin{theorem}  Let $\Gamma$ be an arithmetic Kleinian group in $\SL_2(\CC)$. Then $\Gamma$ is  good.
\end{theorem}

\medskip

\bigskip
{\it Acknowledgements:} We thank Wilhelm Singhof for conversations
on the subject.

The second author  was partially supported by the Spanish Ministry of
  Education, grant MTM2008-06680.
The third and forth authors were partially supported by ``bolsa de
produtividade de pesquisa" from CNPq, Brazil.

%%%%%%%%%%%%%%%%%%%%%%%%%%%%%%%%%%%%%%%%%%%%%%%%
\section{Preliminaries}
%%%%%%%%%%%%%%%%%%%%%%%%%%%%%%%%%%%%%%%%%%%%%%%%

This section contains certain preliminary lemmas which will be of use later.
Also we fix the following standard notations for this paper.

\begin{itemize}
 \item $\F_p$ - stands for the field of $p$ elements;
 \item $\Q_p$ - is the field of $p$-adic numbers;
 \item $\Z_p$ - is the ring of $p$-adic integers;
 \item $\widehat \G$ - is the the profinite completion of a group $\G$;
 \item $\G_{\hat p}$ - is the pro-$p$ completion of a group $\G$;
 \item $G_p$ - is the $p$-Sylow subgroup of a profinite group $G$;
 \item $G_{[p]}$ - is the maximal pro-$p$ quotient of a profinite group $G$;
 \item $\Z_p[[G]]$ - is the completed group ring, i.e.,
$\Z_p[[G]]= \lim\limits_{\displaystyle\longleftarrow}{}_{} \Z_p[G_j]$
which is the inverse
limit of ordinary group rings with $G_j$ ranging over all the finite
quotients  of $G$;
 \item $[G:H]_p$ - is the largest power of $p$ dividing the index
 $[G:H]$ of $H$ in $G$;
 \item  $\dim M$ - is the length of $M$ as $\Z$-module.
\item $d(G)$ - the minimal number of topological generators of a
profinite group $G$.

\end{itemize}

%%%%%%%%%%%%%%%%%%%%%%%%%%%%%%%%%%%%%%%%%%%%
\subsection{Homology and cohomology of profinite groups}
%%%%%%%%%%%%%%%%%%%%%%%%%%%%%%%%%%%%%%%%%%%%

In this section we collect some notation and well known facts
concerning the homology and cohomology of profinite groups. If we
do not say the contrary module means left module.

Let $G$ be a profinite group and $B$ a profinite
$\Z_p[[G]]$-module. The $i$th homology group $H_i(G,B)$ of $G$
with coefficients in $B$ is defined by
$$
H_i(G,B)=\mathrm{Tor}_i^{\Z_p[[G]]}(\Z_p,B),
$$
where $\mathrm{Tor}_i^{\Z_p[[G]]}(\Z_p, - )$ is the $i$th derived
functor of the right exact covariant functor of
$\Z_p\widehat{\otimes}_{\Z_p[[G]]}-$ from the category of left
profinite $\Z_p[[G]]$-modules to profinite $\Z_p$-modules.
% By \cite[Corollary
%4.3]{Brumer}, homology commutes with inverse limits, that is, if
%$G=\lim\limits_{\displaystyle\longleftarrow}{}_{} G_j$ is the
%inverse limit of profinite groups $G_j$, then $
%H_i(G,B)=\lim\limits_{\displaystyle\longleftarrow}{}_{}
%H_i(G_j,B). $

Similarly, given a discrete $\Z_p[[G]]$-module $A$, the $i$th
cohomology group $H^i(G,A)$ of $G$ with coefficients in $A$ is
defined by
$$
H^i(G,A)=\mathrm{Ext}^i_{\Z_p[[G]]}(\Z_p,A),
$$
where $\mathrm{Ext}^i_{\Z_p[[G]]}(\Z_p, - )$  is the $i$th derived
functor of the left exact covariant functor
$\mathrm{Hom}_{\Z_p[[G]]}(\Z_p, - )$ from the category of left
discrete $\Z_p[[G]]$-modules to discrete $\Z_p$-modules. It can be
calculated by using either projective resolutions of the trivial
module $\Z_p$ in the category of profinite $\Z_p[[G]]$-modules or
injective resolutions of the discrete $\Z_p[[G]]$-module $A$.
%Moreover, if $G=\lim\limits_{\displaystyle\longleftarrow}{}_{}
%G_j$, we have
%$$
%H^i(G,A)=\displaystyle\lim_{\displaystyle\longrightarrow} H^i(G_j,
%A).
%$$

The categories of profinite and torsion discrete
$\Z_p[[G]]$-modules are dual via the Pontryagin duality
(\cite[5.1]{RZ}) and so are $H_i(G,-)$ and $H^i(G,-^*)$, where
$^*$ stands for $\mathrm{Hom}(-,\mathbb{Q}/\Z)$. Hence we have
that $H_i(G,\mathbb{F}_p)$ and $H^i(G,\mathbb{F}_p)$ are vector
spaces over $\mathbb{F}_p$ of the same dimension.

By definition, a profinite group $H$ is of type $p$-$FP_{m}$ if
the trivial profinite $\Z_p[[H]]$-module $\Z_p$ has a profinite
projective resolution over $\Z_p[[H]]$ with all projective modules
in dimensions $\leq m$ finitely generated.  We say that $H$
is of type $p$-$FP_{\infty}$ if $H$ is of type $p$-$FP_m$ for
every $m$. If $G$ is a profinite group of type $p$-$FP_m$ and $M$
is a $G$-module of $p$-power order, then the cohomology groups
$H^i(G,A)$ (and therefore $H_i(G,A)$) are finite for all $i\leq
m$. If $G$ is a pro-$p$ group, the fact that $\Z_p[[G]]$ is a
local ring implies that $G$ is of type $FP_m$ if and only if
$H^i(G, \F_p)$ are finite for $i \leq m$ (see Theorem \ref{critfp} for an analogous result for an arbitrary profinite group).

% and
%$\mathrm{Ext}^i_{\Z_p}(\Z_p, - )$ is the $i$th continuous derived
%functor of $\mathrm{Hom}_{\Z_p[[G]]}(\Z_p, - )$.

The cohomological $p$-dimension of a profinite group $G$ is the
lower bound of the integers $n$ such that for every discrete
torsion $G$-module $A$, and for every $i>n$, the $p$-primary
component of $H^i(G,A)$ is null. We shall use the standard
notation ${\rm cd}_p(G)$ for cohomological $p$-dimension of the
profinite group $G$. The cohomological dimension ${\rm cd}(G)$ of
$G$ is defined as the supremum ${\rm cd}(G)={\rm sup}_p({\rm
cd}_p(G))$ where $p$ varies over all primes $p$.

The next proposition gives a well-known characterization  for
${\rm cd}_p$ (see {\cite[Prop. I.$\S$3.1.11 and
I.$\S$4.1.21]{Serre}} and  \cite[Proposition 7.1.4]{RZ}).

\begin{prop}\label{projdim} Let $G$ be a profinite group, $p$ a prime and $n$
an integer. The following properties are equivalent:
\begin{itemize}
 \item[{\rm 1.}] ${\rm cd}_p(G)\leq n$;
 \item[{\rm 2.}] $H^i(G,A)=0$ for all $i>n$ and every discrete $G$-module $A$ which
 is a $p$-primary torsion module;
 \item[{\rm 3.}] $H^{n+1}(G,A)=0$ when $A$ is simple discrete $G$-module annihilated by
 $p$;
 \item[{\rm 4.}] $H^{n+1}(H,\F_p)=0$ for any open subgroup $H$ of
 $G$.
 \item[{\rm 5.}] The projective dimension of the trivial $\F_p[[G]]$-module $\F_p$ is $\le n+1$.
\end{itemize}
\end{prop}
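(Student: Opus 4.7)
The plan is to go around the circle $(1) \Rightarrow (2) \Rightarrow (3) \Rightarrow (4) \Rightarrow (1)$, where most arrows are trivial specializations and all the real content is concentrated in propagating a restricted vanishing statement to full vanishing by dévissage and dimension shifting.

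The equivalence $(1) \iff (2)$ is essentially tautological: every discrete torsion $G$-module $A$ decomposes canonically as a direct sum $A = \bigoplus_q A_{(q)}$ of its $q$-primary components (with each summand a $G$-submodule), so $H^i(G,A)$ decomposes correspondingly and the $p$-primary component of $H^i(G,A)$ agrees with $H^i(G, A_{(p)})$. Thus the definition $\cd_p(G) \le n$ translates directly into (2). The implication $(2) \Rightarrow (3)$ is immediate since a simple $G$-module annihilated by $p$ is in particular a $p$-primary torsion module.

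For $(3) \Rightarrow (4)$, given an open $H \le G$, set $M = \F_p[G/H]$; Shapiro's lemma gives $H^{n+1}(H, \F_p) \cong H^{n+1}(G, M)$. Since $M$ is a finite $\F_p$-module, it admits a composition series whose subquotients are simple discrete $G$-modules annihilated by $p$, on which $H^{n+1}(G,-)$ vanishes by (3). The middle-term argument in the long exact sequence associated to $0 \to A' \to A \to A'' \to 0$ shows that the class of discrete $p$-torsion modules with $H^{n+1}(G,-) = 0$ is closed under extensions, so induction along the composition series of $M$ gives $H^{n+1}(G,M) = 0$ and hence (4).

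The real work is $(4) \Rightarrow (1)$, carried out in three reductions. First, express an arbitrary discrete $p$-torsion module $A$ as a filtered union of finite submodules and use that continuous cohomology commutes with direct limits to reduce to $A$ finite. Second, dévissage along the filtration $A \supset pA \supset p^2 A \supset \cdots$ via short exact sequences $0 \to pA \to A \to A/pA \to 0$ reduces to the case when $A$ is killed by $p$; a further dévissage along a composition series of $A$ as an $\F_p[G/U]$-module (for $U$ open normal acting trivially) reduces to $A$ simple. Third, every such simple $A$ is a $G$-subquotient of $\F_p[G/U] = \mathrm{Coind}_U^G \F_p$, whose cohomology is $H^{n+1}(U, \F_p) = 0$ by (4) and Shapiro; another extension/subquotient dévissage then yields $H^{n+1}(G, A) = 0$. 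To upgrade vanishing from degree $n{+}1$ to all degrees $i > n$, embed a finite $p$-torsion $A$ into the cofree discrete module $I = \mathrm{Coind}_{\{1\}}^G A$, whose higher cohomology vanishes; the resulting short exact sequence $0 \to A \to I \to Q \to 0$ produces the isomorphisms $H^{i+1}(G, A) \cong H^{i}(G, Q)$ for $i \ge 1$, so induction on $i$ propagates vanishing upward.

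The main obstacle is the dimension-shifting step in the last reduction: one must verify that the cofree/coinduced modules remain within the category of discrete $p$-torsion $G$-modules so that the inductive hypothesis applies to the shifted quotient $Q$, and one must be careful that all the dévissage steps stay inside a class of modules on which the hypothesis is controllable. Once these bookkeeping details are in place, the argument is a standard propagation of vanishing and should be attributed to the profinite analogues of Serre's classical results in \cite{Serre}.
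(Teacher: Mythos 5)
Your overall skeleton is the standard one (the paper itself gives no proof, citing the statement as well known), and most of it is fine: $(1)\Leftrightarrow(2)$ by splitting into primary components, $(2)\Rightarrow(3)$ trivially, $(3)\Rightarrow(4)$ by Shapiro plus extension d\'evissage, and the reduction of $(4)\Rightarrow(1)$ to finite modules, to modules killed by $p$, to simple modules, followed by dimension shifting with $0\to A\to \mathrm{Coind}_{\{1\}}^G A\to Q\to 0$ — all of that is correct. The genuine gap is the key step for simple modules: from $H^{n+1}(G,\F_p[G/U])\cong H^{n+1}(U,\F_p)=0$ you cannot conclude $H^{n+1}(G,A)=0$ for a simple subquotient $A$ of $\F_p[G/U]$ by ``extension/subquotient d\'evissage''. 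Vanishing of $H^{n+1}(G,-)$ is inherited by extensions (middle term from the two ends), but not by submodules or quotients: writing $0\to K\to \F_p[G/U]\to A\to 0$, the long exact sequence only embeds $H^{n+1}(G,A)$ into $H^{n+2}(G,K)$, which is not yet known to vanish; using instead an embedding $A\hookrightarrow\F_p[G/U]$ only exhibits $H^{n+1}(G,A)$ as a quotient of $H^{n}(G,C)$, which certainly need not vanish when $\cd_p(G)=n$. So the one step carrying the real content of $(4)\Rightarrow(3)$ is not established.

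The standard repair uses the prime-to-$p$ index trick rather than the permutation module. Let $U$ be an open normal subgroup acting trivially on the simple module $A$ (killed by $p$), and choose $U\le H\le G$ open with $H/U$ a $p$-Sylow subgroup of $G/U$. Since $[G:H]$ is prime to $p$ and $H^{n+1}(G,A)$ is killed by $p$, the composite $\mathrm{cor}\circ\mathrm{res}=[G:H]$ shows that restriction $H^{n+1}(G,A)\to H^{n+1}(H,A)$ is injective. Now $H/U$ is a finite $p$-group, and the only simple $\F_p$-module of a finite $p$-group is the trivial one, so $A|_H$ has a filtration with all factors isomorphic to $\F_p$; hypothesis (4) applied to the open subgroup $H$ together with your (legitimate, extension-direction) d\'evissage gives $H^{n+1}(H,A)=0$, hence $H^{n+1}(G,A)=0$. (Equivalently, one can reduce to a $p$-Sylow subgroup $G_p$ via $\cd_p(G)=\cd_p(G_p)$ and compute $H^{n+1}(G_p,\F_p)$ as the direct limit of $H^{n+1}(U,\F_p)$ over open $U\supseteq G_p$.) With this replacement the rest of your argument, including the upward dimension shifting, goes through.
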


Note that if $G$ is pro-$p$ then there is only one simple discrete
$G$-module annihilated by $p$, namely the trivial module
$\mathbb{F}_p$.

The Lyndon-Hochschild-Serre spectral sequence will be the  most
important tool in this paper. We will give its brief description
and the most important consequences. For the details see
\cite{RZ}.

\begin{theorem}[{\cite[Thm. 7.2.4]{RZ}}] Let $N$ be a normal
closed subgroup of a profinite group $G$, and let $A$ be a
discrete $G$-module. Then there exists a spectral sequence
$\mathbb{E}=(E_t^{r,s})$ such that $$E^{r,s}_2\cong
H^r(G/N,H^s(N,A))\Rightarrow H^n(G,A).$$
\end{theorem}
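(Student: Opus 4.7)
The plan is to realize this spectral sequence as an instance of Grothendieck's spectral sequence for the composition of two left exact functors. The first observation is that for a discrete $G$-module $A$, the group of $N$-invariants $A^N$ carries a natural discrete $G/N$-module structure, and one has the factorisation
\begin{equation*}
(-)^G \;=\; (-)^{G/N} \circ (-)^N
\end{equation*}
on the appropriate categories of discrete modules. Each of these functors is left exact, and the machinery developed earlier in Section 2 identifies its right derived functors with the cohomology groups one wants: $R^s((-)^N)(A) \cong H^s(N,A)$ (as a discrete $G/N$-module), $R^r((-)^{G/N})(B) \cong H^r(G/N,B)$, and $R^n((-)^G)(A) \cong H^n(G,A)$.

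To apply Grothendieck's theorem I need two further ingredients. First, the categories of discrete $G$- and $G/N$-modules must have enough injectives; this is standard in the profinite setting and recorded in \cite{RZ}. Second, and more seriously, I must verify that for every injective discrete $G$-module $I$ the $N$-invariants $I^N$ form a $(-)^{G/N}$-acyclic discrete $G/N$-module. The cleanest route is to show that an injective discrete $G$-module is a direct summand of a coinduced module of the form $\mathrm{Hom}^{\mathrm{disc}}(G,D)$ for some divisible abelian group $D$, and then to check by an explicit computation that taking $N$-invariants of such a coinduced module yields $\mathrm{Hom}^{\mathrm{disc}}(G/N,D)$, which is again injective and hence in particular $(-)^{G/N}$-acyclic.

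Once these two points are in hand, Grothendieck's theorem delivers a convergent first-quadrant cohomological spectral sequence with
\begin{equation*}
E_2^{r,s} \;\cong\; H^r\bigl(G/N,\,H^s(N,A)\bigr) \;\Longrightarrow\; H^{r+s}(G,A),
\end{equation*}
as required. The main obstacle is the acyclicity step: while in spirit it is the same as the classical discrete Lyndon--Hochschild--Serre argument, one has to keep careful track of the discrete-vs-profinite topology on Hom-groups, and make sure coinduction is performed in the discrete-module sense so that invariants really do compute as asserted. If desired, one can reduce to finite quotients $G/U$ with $U$ open normal contained in $N$, invoke the finite-group LHS spectral sequence there, and pass to the direct limit using the fact that $H^i(G,A) = \varinjlim H^i(G/U, A^U)$ recorded earlier in this section.
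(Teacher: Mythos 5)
The paper does not prove this statement at all — it is quoted directly from \cite[Thm. 7.2.4]{RZ} — and your argument is essentially the standard proof given in that reference: the Grothendieck composite-functor spectral sequence for the factorisation $(-)^G=(-)^{G/N}\circ(-)^N$ on discrete modules, with the key acyclicity step handled exactly as you describe, since an injective discrete $G$-module is a direct summand of the module of locally constant maps from $G$ to a divisible group $D$, whose $N$-invariants are the corresponding coinduced discrete $G/N$-module and hence have vanishing higher cohomology by Shapiro's lemma. Your fallback route of reducing to finite quotients $G/U$ with $U$ open normal in $N$ and passing to the direct limit is also sound, since cohomology of profinite groups commutes with the relevant direct limits.
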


\begin{corollary}\label{consLHS}
Let $N$ be a normal closed subgroup of a profinite group $G$, and
let $A$ be a discrete $G$-module. Then the following holds.
\begin{enumerate}
 \item[{\rm (1)}] There exists always
 a five term exact sequence
 $$
 \begin{array}{l}
 0\to H^1(G/N,A^N)\to H^1(G,A)\to \medskip \\
 H^1(N,A)^{G/N} \to H^2(G/N,A^N) \to H^2(G,A).
 \end{array}
 $$
 \item[{\rm (2)}] If $H^i(N,A)=0$ for all $i\ge
 1$, then $H^i(G,A)\cong H^i(G/N,A^N)$.
 \item[{\rm (3)}] If $H^1(N,A)=0$, then $H^1(G/N,A)\cong H^1(G,A)$ and $H^2(G/N,A)\hookrightarrow
 H^2(G,A)$.
  \item[{\rm (4)}] If $\cd_p(G/N)\le 1$ and $A$ is a $p$-primary torsion, then there exists the following exact sequence
 $$
0\to  H^1(G/N, H^i(N,A))\to H^{i+1}(G,A)\to  H^{i+1}(N,A)^{G/N}\to 0.
 $$
 \item[{\rm (5)}] If $G/N^p[N,N]$ splits as the direct product $G/N\times
 N/N^p[N,N]$ then
 $$
 H^2(G/N,\F_p)\oplus
 H^1(G/N,H^1(N,\F_p))\hookrightarrow H^2(G,\F_p).
 $$
\end{enumerate}
\end{corollary}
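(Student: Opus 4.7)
The plan is to use the Lyndon--Hochschild--Serre spectral sequence of $1\to N\to G\to G/N\to 1$ with trivial coefficients $\F_p$,
$$E_2^{r,s}=H^r(G/N,H^s(N,\F_p))\Rightarrow H^{r+s}(G,\F_p),$$
and to read the claim off the filtration $0\subseteq F^2\subseteq F^1\subseteq F^0=H^2(G,\F_p)$ with $F^i/F^{i+1}\cong E_\infty^{i,2-i}$. For dimensional reasons, the only differentials that can prevent the equalities $E_\infty^{2,0}=H^2(G/N,\F_p)$ and $E_\infty^{1,1}=H^1(G/N,H^1(N,\F_p))$ are the two transgressions
$$d_2^{0,1}\colon H^1(N,\F_p)^{G/N}\to H^2(G/N,\F_p),\qquad d_2^{1,1}\colon H^1(G/N,H^1(N,\F_p))\to H^3(G/N,\F_p).$$
If both vanish, then $F^1$ is a subspace of $H^2(G,\F_p)$ fitting into a short exact sequence of $\F_p$-vector spaces $0\to H^2(G/N,\F_p)\to F^1\to H^1(G/N,H^1(N,\F_p))\to 0$, which splits, producing the required embedding.

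To force the two transgressions to vanish I would exploit the naturality of the LHS spectral sequence under the morphism of extensions induced by the quotient $G\to G/N^p[N,N]$:
$$\begin{array}{ccccccccc}
1 & \to & N & \to & G & \to & G/N & \to & 1 \\
 & & \downarrow & & \downarrow & & \parallel & & \\
1 & \to & N/N^p[N,N] & \to & G/N^p[N,N] & \to & G/N & \to & 1.
\end{array}$$
In the inflation direction this yields a morphism from the spectral sequence of the lower row to that of the upper one, whose effect on $E_2^{r,s}$ is $H^r(G/N,-)$ applied to the inflation $H^s(N/N^p[N,N],\F_p)\to H^s(N,\F_p)$. For $s=0$ this is the identity, and for $s=1$ it is an isomorphism since $H^1(N,\F_p)=\Hom(N,\F_p)$ factors through $N/N^p[N,N]$. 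Consequently $d_2^{0,1}$ and $d_2^{1,1}$ in the upper spectral sequence are identified, via commutative squares, with the corresponding differentials in the lower one.

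By hypothesis the lower extension is the direct product $G/N\times N/N^p[N,N]$ with trivial action, so the K\"unneth formula with field coefficients gives
$$H^n(G/N^p[N,N],\F_p)=\bigoplus_{r+s=n}H^r(G/N,\F_p)\otimes_{\F_p}H^s(N/N^p[N,N],\F_p),$$
whose total $\F_p$-dimension in each degree matches that of the $E_2$-page of the lower LHS spectral sequence. Hence the lower spectral sequence degenerates at $E_2$, all its $d_r$ ($r\geq 2$) vanish, and by the comparison of the previous paragraph so do $d_2^{0,1}$ and $d_2^{1,1}$ in the upper one, completing the argument.

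The only genuinely delicate step is the second paragraph: one has to confirm that the profinite LHS spectral sequence is functorial in morphisms of short exact sequences of profinite groups and that the induced $E_2$-map is indeed $H^r(G/N,-)$ applied to inflation on the fiber cohomology. Once this naturality is set up, the rest of the proof reduces to K\"unneth and the standard five-term analysis of the spectral sequence.
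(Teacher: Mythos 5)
Your proposal treats only item (6); items (1)--(5) of the corollary are never addressed. Most of those are standard (the paper disposes of (1), (2) and (4) by citation and gets (3) from (1)), but (5) does require its own short argument: with $\cd_p(G/N)=1$ the $E_2$-page of the LHS spectral sequence is concentrated in the columns $r=0,1$, so it collapses at $E_2$, and the resulting two-step filtration of $H^{i+1}(G,A)$ with graded pieces $H^1(G/N,H^i(N,A))$ and $H^{i+1}(N,A)^{G/N}$ splits because $pA=0$ makes everything an $\F_p$-vector space. As written, your text proves a fraction of the stated result, so at minimum you must add these cases.

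For item (6) itself your route is essentially the paper's: compare the spectral sequence of $1\to N\to G\to G/N\to 1$ with that of $1\to N/N^p[N,N]\to G/N^p[N,N]\to G/N\to 1$ via the map induced by $G\to G/N^p[N,N]$, note that on the fiber the inflation is the identity in degree $0$ and an isomorphism in degree $1$ (since $H^1(N,\F_p)=\Hom(N,\F_p)$ factors through $N/N^p[N,N]$), and conclude $d_2^{0,1}=0$, $d_2^{1,1}=0$, whence $E_\infty^{2,0}=E_2^{2,0}$, $E_\infty^{1,1}=E_2^{1,1}$ and the dimension count gives the embedding. The one step I would not let stand is your justification of the degeneration of the product spectral sequence by a K\"unneth dimension count: when $H^1(N,\F_p)$ or $H^*(G/N,\F_p)$ is infinite-dimensional, "a subquotient of the $E_2$-page with the same total dimension" does not force all differentials to vanish, and the validity of the K\"unneth formula in the profinite setting would itself need an argument. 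Either cite the standard fact that the LHS spectral sequence of a direct product with trivial action collapses at $E_2$ (as the paper does, \cite[p.~96, Exercise~7]{NSW}), or argue directly that you only need $\bar d_2^{0,1}=0$ and $\bar d_2^{1,1}=0$: the retraction of $G/N\times N/N^p[N,N]$ onto the factor $N/N^p[N,N]$ makes the restriction onto $E_2^{0,1}$ surjective, killing $\bar d_2^{0,1}$, and multiplicativity of the spectral sequence together with $E_2^{1,1}\cong H^1(G/N,\F_p)\otimes H^1(N/N^p[N,N],\F_p)$ then kills $\bar d_2^{1,1}$. With that repair, and with (1)--(5) supplied, the argument is correct and coincides with the paper's.
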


\begin{proof}
Items (1) and (2) can be found in \cite[Corollary 7.2.5]{RZ}; (3)
follows directly from (1).  We sketch the proofs of items (4) and (5).

(4)  Consider the  Lyndon-Hochschild-Serre spectral sequences
$(E^{\bullet,\bullet}_\bullet, d_\bullet)$ for the $G$-module $A$
associated to the extension $1\to N \to G\to G/N\to 1$. From the
hypothesis it follows that $E_2^{t,\bullet}=0$ for $t\ge 2$. Hence
the spectral sequence $( E^{\bullet,\bullet}_\bullet, d_\bullet)$
collapses at the $ E_2$-term.

(5) Consider the  Lyndon-Hochschild-Serre spectral sequences
$(E^{\bullet,\bullet}_\bullet, d_\bullet)$ for the trivial
$G$-module $\F_p$ associated to the extension $1\to N \to G\to
G/N\to 1$, and $(\bar E^{\bullet,\bullet}_\bullet, d_\bullet)$
for the trivial $G/N^p[N,N]$-module $\F_p$ associated to the
extension $1\to N/N^p[N,N] \to G/N^p[N,N]\to G/N\to 1$. Note that
by construction, the second extension is a direct product, and
thus the spectral sequence $(\bar E^{\bullet,\bullet}_\bullet,
d_\bullet)$ collapses at the $\bar E_2$-term, i.e., $\bar d_t = 0$
 for all $t \ge  2$ (\cite[p. 96, Exercise 7]{NSW}). Denote by
 $\pi_2^{n,k}$ the natural map $\bar E_2^{n,k}\to E_2^{n,k}$. Then
 for each pair $n,k$
 we have
 the following commutative diagram
 $$\begin{array}{ccc}
 \bar E_2^{n,k} & \stackrel{0}{\longrightarrow}& \bar E_2^{n+2,k-1}\medskip\\
 \downarrow ^{\pi_2^{n,k}} &&\downarrow ^{\pi_2^{n+2,k-1}}\\
 E_2^{n,k}&\stackrel{d_2^{n,k}}{\longrightarrow}& E_2^{n+2,k-1}\end{array}
 $$
Since $\pi_2^{0,1}$  is an isomorphism, it follows from the
diagram that $d_2^{0,1}=0$ and so $E_3^{2,0}= E_2^{2,0}\cong
H^2(G/N,\F_p)$. In the same way, as $\pi_2^{1,1}$ is an
isomorphism, we get that $d_2^{1,1}=0$ and so
$E_3^{1,1}=E_2^{1,1}\cong H^1(G/N,H^1(N,\F_p))$.

\end{proof}
We will use the following result that relates the cohomological
$p$-dimensions of $G$, $N$ and $G/N$.
\begin{theorem}[{\cite[Thm. 1.1]{WZ}}]\label{comptes}
Let $G$ be a profinite group of finite cohomological $p$-dimension
$cd_p(G)=n$ and let $N$ be a closed normal subgroup of $G$ of
cohomological $p$-dimension $cd_p(N)=k$ such that
$H^k(N,\mathbb{F}_p)$ is nonzero and finite. Then $G/N$ is of
virtual cohomological $p$-dimension $n-k$.
\end{theorem}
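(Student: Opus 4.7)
The plan is to combine the Lyndon--Hochschild--Serre spectral sequence
$$E_2^{r,s}=H^r(G/N,H^s(N,\F_p))\Longrightarrow H^{r+s}(G,\F_p)$$
with the standard subadditivity bound for cohomological $p$-dimension. By $\cd_p(N)=k$ the sequence is concentrated in rows $0\le s\le k$, and the finiteness of $H^k(N,\F_p)$ makes the top row manageable. As a preliminary reduction, since $H^k(N,\F_p)$ is a finite profinite $G/N$-module, its stabilizer has finite index in $G/N$; pulling this back gives an open normal subgroup $U\trianglelefteq G$ containing $N$ on which $U/N$ acts trivially on $H^k(N,\F_p)$. Because $\vcd_p$ is insensitive to finite-index extensions, it suffices to prove $\cd_p(U/N)=n-k$.

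For the lower bound I would invoke the subadditivity $\cd_p(V)\le\cd_p(N)+\cd_p(V/N)$ supplied by the spectral sequence itself. For any open $V\le U$ of index prime to $p$ in $G$ one has $\cd_p(V)=\cd_p(G)=n$ (restriction being injective on $p$-primary cohomology when $[G:V]$ is prime to $p$), so subadditivity forces $\cd_p(V/N)\ge n-k$. Such $V/N$ are cofinal among open subgroups of $U/N$ of prime-to-$p$ index in $G/N$, yielding $\vcd_p(G/N)\ge n-k$.

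For the upper bound I would verify the criterion of Proposition~\ref{projdim}(4), showing $H^{n-k+1}(V/N,\F_p)=0$ for every open $V/N\le U/N$. In the spectral sequence for $V$, the top row $E_2^{r,k}=H^r(V/N,H^k(N,\F_p))$ receives no incoming differentials, since the potential sources sit in rows $s>k$ which vanish; hence $E_\infty^{r,k}$ embeds in $E_2^{r,k}$, and the abutment filtration gives a surjection $H^{r+k}(V,\F_p)\twoheadrightarrow E_\infty^{r,k}$. Applied with $r=n-k+1$, the vanishing $H^{n+1}(V,\F_p)=0$ arising from $\cd_p(V)\le n$ forces $E_\infty^{n-k+1,k}=0$. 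Under the triviality of the $V/N$-action, $E_2^{n-k+1,k}$ is a direct sum of copies of $H^{n-k+1}(V/N,\F_p)$, so the desired vanishing follows once one knows $E_2^{n-k+1,k}=E_\infty^{n-k+1,k}$, i.e.~that the outgoing differentials $d_2,d_3,\ldots$ from the top row vanish. I would establish this by induction on $k$: a non-zero outgoing differential from $(n-k+1,k)$ would land in a strictly lower row, and dimension-shifting against the finite coefficient $H^k(N,\F_p)$ would reduce the hypothesis to a pair with smaller value of $\cd_p(N)$, with base case $k=0$ furnished by Corollary~\ref{consLHS}(2).

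The hardest part is the upper bound --- specifically, controlling the outgoing differentials from the top row of the spectral sequence. This is where the finiteness of $H^k(N,\F_p)$ is truly essential: it underwrites the dimension-shifting that drives the induction and is precisely the technical heart of \cite[Thm.~1.1]{WZ}.
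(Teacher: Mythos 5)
The paper does not prove this statement at all --- it is quoted verbatim from \cite[Thm.\ 1.1]{WZ} --- so your attempt has to stand on its own rather than be compared with an internal argument. The routine parts of your plan are fine: the passage to an open normal $U\supseteq N$ acting trivially on the finite module $H^k(N,\F_p)$, the lower bound via subadditivity (where the prime-to-$p$ index restriction is unnecessary: since $\cd_p(G)=n<\infty$, every open $V\le G$ containing $N$ has $\cd_p(V)=n\le k+\cd_p(V/N)$, so every open subgroup of $G/N$ has $\cd_p\ge n-k$), and the bookkeeping in the top row (no incoming differentials, a surjection $H^{r+k}(V,\F_p)\twoheadrightarrow E_\infty^{r,k}$, hence $E_\infty^{n-k+1,k}=0$ because $\cd_p(V)\le n$).

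The gap is exactly where you place it, and it is not a repairable detail but the substance of the theorem. Vanishing of $E_\infty^{n-k+1,k}$ says nothing about $E_2^{n-k+1,k}\cong H^{n-k+1}(V/N,\F_p)^d$ unless the outgoing differentials $d_t\colon E_t^{n-k+1,k}\to E_t^{n-k+1+t,\,k+1-t}$ vanish, and their targets are subquotients of $H^{n-k+1+t}(V/N,H^{k+1-t}(N,\F_p))$, i.e.\ cohomology of $V/N$ in degrees strictly larger than $n-k+1$ --- precisely the groups whose vanishing the theorem asserts. So ``the outgoing differentials vanish'' is essentially equivalent to the conclusion, and the sketch is circular at this point; note that differentials leaving the top row are genuinely nonzero in other bidegrees under these hypotheses (for the Heisenberg pro-$p$ group with $N$ its centre, the transgression $H^0(G/N,H^1(N,\F_p))\to H^2(G/N,\F_p)$ is nonzero), so no general collapse principle can be invoked. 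The proposed induction on $k$ is not an argument as stated: dimension-shifting alters the coefficient module, never $\cd_p(N)$, and you give no construction producing from $(G,N)$ a pair with a normal subgroup of smaller cohomological $p$-dimension still satisfying the finiteness hypothesis; nor can one run a downward induction on the degree, since a priori $\vcd_p(G/N)$ might be infinite. This missing step is where the finiteness of $H^k(N,\F_p)$ must do real work, and it is the actual content of the cited result of Weigel and Zalesskii; as written, your argument establishes only the inequality $\vcd_p(G/N)\ge n-k$.
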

In the case when $N$ is of cohomological $p$-dimension 0 or 1 we
have the following corollary.
\begin{corollary} Let $G$ be a profinite group of finite cohomological $p$-dimension
$cd_p(G)$ and let $N$ be a finitely generated closed normal
subgroup of $G$ of cohomological $p$-dimension $cd_p(N)\leq 1$.
Then $G/N$ is of virtual cohomological $p$-dimension
$cd_p(G)-cd_p(N)$. \label{corwz}
\end{corollary}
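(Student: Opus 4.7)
The plan is to deduce the corollary from Theorem \ref{comptes} applied with $k = \cd_p(N) \in \{0,1\}$. In both cases the finiteness clause on $H^k(N, \F_p)$ is automatic since $N$ is finitely generated, so the main task is to verify non-vanishing. If $\cd_p(N) = 0$, the trivial-module computation $H^0(N, \F_p) = \F_p$ immediately satisfies the hypothesis, and Theorem \ref{comptes} yields $\vcd_p(G/N) = \cd_p(G) - \cd_p(N)$.

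If $\cd_p(N) = 1$, the group $H^1(N, \F_p) = \Hom(N, \F_p)$ can vanish even though $\cd_p(N) > 0$ (for instance when $N = \Z_p \rtimes (\Z/(p-1))$ with faithful action), so I would pass to an open subgroup. Proposition \ref{projdim}(4), applied contrapositively, supplies an open subgroup $U \leq N$ with $H^1(U, \F_p) \neq 0$. I then form the $G$-core $U^G := \bigcap_{g \in G} g U g^{-1}$: finite generation of $N$ forces only finitely many open subgroups of index $[N:U]$, so $U$ has finitely many $G$-conjugates and $U^G$ is open in $N$ and normal in $G$. The continuous action of $G$ on the finite set $N/U^G$ then produces an open subgroup $K \leq G$ with $K \cap N = U^G$.

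The remaining step, and the main obstacle, is to ensure $H^1(U^G, \F_p) \neq 0$. Choosing $U$ so that $[U:U^G]$ is coprime to $p$, which can be arranged using profinite Sylow theory (since $\cd_p(N) = 1$ forces the pro-$p$ Sylow of $N$ to be a nontrivial free pro-$p$ group, and one may take $U$ to be a $p'$-overgroup of such a Sylow), the standard transfer-corestriction argument yields the injection $H^1(U, \F_p) \hookrightarrow H^1(U^G, \F_p)$. Theorem \ref{comptes} applied to the pair $(K, U^G)$, noting $\cd_p(K) = \cd_p(G)$ and $\cd_p(U^G) = \cd_p(N) = 1$ (cohomological $p$-dimension is invariant under open subgroups), then gives $\vcd_p(K/U^G) = \cd_p(G) - 1$; since $K/U^G \cong KN/N$ is an open subgroup of $G/N$, the virtual cohomological $p$-dimensions agree, yielding $\vcd_p(G/N) = \cd_p(G) - \cd_p(N)$.
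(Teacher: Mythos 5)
Your treatment of the case $\cd_p(N)=0$ (apply Theorem \ref{comptes} directly with $k=0$, since $H^0(N,\F_p)=\F_p$ is nonzero and finite) is fine, and in the case $\cd_p(N)=1$ you correctly locate the real issue: $H^1(N,\F_p)$ may vanish, so one must pass to an open subgroup $U\le N$ with $H^1(U,\F_p)\neq 0$, which Proposition \ref{projdim} provides. The gap is in what you do next. Replacing $U$ by its $G$-core $U^G$ creates a new problem, namely $H^1(U^G,\F_p)\neq 0$, and your proposed solution does not work as stated. Taking $U$ to contain a $p$-Sylow $S$ of $N$ does not make $[U:U^G]$ prime to $p$: for $U^G=\bigcap_{g}U^g$ to contain a $p$-Sylow of $U$ you would need $U$ to contain every $G$-conjugate of $S$, and since all $p$-Sylows of the normal subgroup $N$ are already $N$-conjugate, this forces $U$ to contain the closed subgroup $T$ generated by \emph{all} $p$-Sylow subgroups of $N$. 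For such $U$ one has $H^1(U,\F_p)=H^1(T,\F_p)^{U/T}$ because $U/T$ is pro-$p'$, so the nonvanishing you need reduces to $H^1(T,\F_p)\neq 0$, i.e.\ to $T$ not being topologically perfect --- which is exactly the same kind of unproved nonvanishing you started from, and nothing in the hypotheses rules out the bad case (your own example $\Z_p\rtimes C_{p-1}$ shows $\cd_p=1$ does not force $H^1(\cdot,\F_p)\neq 0$). A smaller slip in the same step: the kernel of the conjugation action of $G$ on $N/U^G$ meets $N$ in the preimage of the centre of $N/U^G$, which may be strictly larger than $U^G$, so that argument alone does not produce $K$ with $K\cap N=U^G$.

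The detour through the core is in fact unnecessary, and this is how the paper proceeds: for an \emph{arbitrary} open subgroup $V$ of $N$ (no normality in $G$ required) there is an open subgroup $K$ of $G$ with $K\cap N=V$ exactly. Choose an open normal subgroup $W$ of $G$ with $W\cap N\le V$ (possible since $N\setminus V$ is closed and does not contain $1$) and put $K=VW$; this is a subgroup because $W$ is normal, it is open because it contains $W$, and the modular law gives $K\cap N=V(W\cap N)=V$ since $V\le N$. Taking $V=U$ with $H^1(U,\F_p)\neq 0$ (finite because $U$ is open in the finitely generated group $N$), Theorem \ref{comptes} applied to the pair $(K,U)$ yields $\vcd_p(K/U)=\cd_p(G)-1$, and $K/U\cong KN/N$ is open in $G/N$, which finishes the proof exactly as in your last step. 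So the skeleton of your argument is right, but the core-plus-coprimality step is a genuine gap, not a routine verification left to the reader.
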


\begin{proof} If $p$ does
not divide $|N|$,  then Corollary \ref{consLHS}(2) implies the
isomorphisms $H^{k}(U/N,\mathbb{F}_p)\cong H^{k}(U,\mathbb{F}_p)$
for all open subgroups $U$ of $G$ which contain $N$ and all $k$.
Now from Proposition \ref{projdim} it follows that
$cd_p(G/N)=cd_p(G)$.

If $p$ divides $|N|$ then, by Proposition \ref{projdim},
$cd_p(N)=1$ and so there exists an open subgroup $V$ of $N$ such
that $H^1(V,\mathbb{F}_p)\neq 0$. We can find an open subgroup $U$ of $G$
such that $U\cap N= V$ and  apply Theorem \ref{comptes} to $U$ and
$V$.  \end{proof}

%%%%%%%%%%%%%%%%%%%%%%%%%
\subsection{The deficiency}\label{defi}
%%%%%%%%%%%%%%%%%%%%%%%%%%%

If $G$ is a finitely generated group (profinite group), then we
say that $G$ is of {deficiency} $k$ if there exist a presentation
$\pi:F\to G$ of $G$ of deficiency $k$, i.e. such that the rank of
the free group (profinite group) $F$ minus the number of
generators of $\ker\pi$ as a (closed) normal subgroup of $F$ is
$k$. If $r>0$, it is possible to add further relations that are
consequences of the original ones, so a presentation of smaller
deficiency is obtained. Thus by our definition a group of
deficiency $k+1$ is also of deficiency $k$. We denote by $\df(G)$
the greatest $k$ such that $G$ is of deficiency $k$. Note that for
any group $G$ one has $\df(G)\le \df(\widehat G)$.

Below we shall introduce other invariants  that help to describe
more precisely properties of profinite groups; some of them are
taken from \cite{Lu}. The necessity of this already can be seen
from the fact that any pro-$p$ group (including free pro-$p$
groups) as a profinite group is of non-positive deficiency.

Let $G$ be a finitely generated profinite group. Denote by $d(G)$
its minimal number of generators. If $M$ is a non zero finite
$G$-module we denote by $\dim M$ the length of $M$ as $\Z$-module
and put
$$
\ochi_2(G,M)=\frac { - \dim  H^2(G,M) + \dim H^1(G,M) - \dim
H^0(G,M) } { \dim(M) }.
$$
Also we introduce a cohomological variation of the minimal number
of generators. We put
$$
\ochi_1(G,M)=\frac {\dim H^1(G,M)-\dim H^0(G,M) } { \dim(M) }.
$$
If $M=\{0\}$, then we agree that $\ochi_2(G,M)=+\infty $ and
$\ochi_1(G,M)= -\infty$. In fact,
$\ochi_k(G,M)=-\chi_k(G,M)/\dim(M)$, where $\chi_k(G,M)$ is the
partial Euler characteristic of the $G$-module $M$.

\begin{example}\label{xi(F)}
\em{ a) If $G$ is a finitely generated profinite group and $d(G)$
denotes the minimal number of generators of $G$, we have
 $$
 \dim H^0(G,M)=\dim M^G \textrm{\  and\ }
 \ochi_1(G,M)\leq
 d(G)-1;
 $$

 \medskip \noindent b) If $F$ be a free profinite group of rank $\rk(F)$,
 $$
 \ochi_2(F,M)=\ochi_1(F,M)=\rk(F)-1.
 $$

Indeed, for any finitely generated profinite group $G$, the
augmentation ideal of $\Z_p[[G]]$ can be generated as
$\Z_p[[G]]$-module by $d(G)$ elements. So we write a partial free
$\Z_p[[G]]$-resolution
$$
\mathcal{F}: ~~ F_2 \to \Z_p[[G]]^{d(G)}\stackrel{\delta}{\to}
\Z_p[[G]] \to \Z_p \to 0
$$
and apply $Hom_{\Z_p[[G]]}(-,M)$ to the complex
$\mathcal{F}_{del}$ obtained by suppressing $\Z_p$. The cohomology
of $G$ (in small dimensions) is the cohomology of the complex
$$
Hom_{\Z_p[[G]]}(\mathcal{F}_{del},M): ~~~~ 0 \to M
\stackrel{\phi}\to M^{d(G)}\stackrel{\psi}\to
Hom_{\Z_p[[G]]}(F_2,M).
$$
Then
$$\dim H^0(G,M)=\dim \ker(\phi) =\dim M^G$$
and
$$
\begin{array}{rl}
\dim(M) \ochi_1(G,M)&=\dim H^1(G,M)-\dim H^0(G,M) \medskip \\
& = \dim \ker(\psi)- \dim ~\mathrm{Im}(\phi)-\dim ~\ker(\phi)
\medskip \\
& = \dim \ker(\psi) - \dim M\medskip \\
& \leq d(G) \dim(M)- \dim(M) = \dim(M)(d(G)-1).
\end{array}
$$
This proves item a). Now, if $G=F$ is free we have $d(G)=\rk(F)$
and the augmentation ideal of $\Z_p[[F]]$ is free as
$\Z_p[[F]]$-module of rank $\rk(F)$. So we can take $F_2=0$ in
$\mathcal{F}$ and the above inequality is an equality. The first
equality in b) we obtain from $H^2(F,M)=0$. }
\end{example}

\begin{lemma}\label{short}
Let $G$ be a finitely generated profinite group. Let $0\to M'\to
M\to M''\to 0$ be an exact sequence of finite $G$-modules. Then
$$
\ochi_2(G,M)\ge \min\{ \ochi_2(G,M'),\ochi_2(G,M'')\}
$$
and
$$
\ochi_1(G,M)\le \max\{ \ochi_1(G,M'),\ochi_1(G,M'')\}.
$$
\end{lemma}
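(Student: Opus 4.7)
The natural approach is to feed the short exact sequence into the long exact cohomology sequence and read off a relation between alternating dimensions. Write $h^i_X = \dim H^i(G,X)$ for a finite $G$-module $X$, so that
\[
 \dim(X)\,\ochi_1(G,X) = h^1_X - h^0_X, \qquad \dim(X)\,\ochi_2(G,X) = -h^2_X + h^1_X - h^0_X.
\]
Since we only need these inequalities when at least one of $M', M''$ is nonzero (the vanishing cases being handled by the $\pm\infty$ conventions), I will assume nonvanishing; the idea is to show the stronger weighted-average statements
\[
 \dim(M)\,\ochi_1(G,M) \le \dim(M')\ochi_1(G,M') + \dim(M'')\ochi_1(G,M''),
\]
\[
 \dim(M)\,\ochi_2(G,M) \ge \dim(M')\ochi_2(G,M') + \dim(M'')\ochi_2(G,M''),
\]
from which the min/max assertions follow by dividing by $\dim(M) = \dim(M') + \dim(M'')$ and noting that a weighted average lies between the two values.

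For $\ochi_1$, I would truncate the long exact sequence at $H^1(G,M'')$, replacing that term by $K := \operatorname{image}\bigl(H^1(G,M) \to H^1(G,M'')\bigr)$, yielding a bounded exact sequence
\[
 0 \to H^0(M') \to H^0(M) \to H^0(M'') \to H^1(M') \to H^1(M) \to K \to 0.
\]
The alternating sum of dimensions vanishes; solving for $h^1_M - h^0_M$ gives
\[
 h^1_M - h^0_M = (h^1_{M'} - h^0_{M'}) + (h^1_{M''} - h^0_{M''}) + (\dim K - h^1_{M''}),
\]
and $\dim K \le h^1_{M''}$ gives the desired upper bound on $\dim(M)\ochi_1(G,M)$.

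For $\ochi_2$ the argument is the mirror image: truncate at $H^2(G,M'')$, replacing it by $L := \operatorname{image}\bigl(H^2(G,M) \to H^2(G,M'')\bigr)$, so that
\[
 0 \to H^0(M') \to \cdots \to H^1(M'') \to H^2(M') \to H^2(M) \to L \to 0
\]
is exact. The vanishing alternating sum, after regrouping, gives
\[
 -h^2_M + h^1_M - h^0_M = \dim(M')\ochi_2(G,M') + \dim(M'')\ochi_2(G,M'') + (h^2_{M''} - \dim L),
\]
and $\dim L \le h^2_{M''}$ supplies the required lower bound. There is essentially no obstacle here beyond correct bookkeeping: the single input is that for an exact sequence of finite abelian groups the alternating sum of $\Z$-lengths vanishes, and the ``error'' term in each case sits naturally on the correct side of the inequality because the truncated LES term is a subspace of the next cohomology group. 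The conventions for $M'=0$ or $M''=0$ are immediate, since then $\dim(M')=0$ or $\dim(M'')=0$ kills the corresponding term and one factor collapses to $M \cong M'' $ (respectively $M'$).
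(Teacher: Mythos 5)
Your proposal is correct and follows essentially the same route as the paper: both feed the short exact sequence into the long exact cohomology sequence, truncate at the $H^1(G,M'')$ (resp.\ $H^2(G,M'')$) term — your image module $K$ (resp.\ $L$) plays exactly the role of the paper's correction term $\dim\im\delta_3$ — and deduce the weighted-average inequality, from which the min/max bounds follow. No substantive difference; your write-up just makes explicit the $\ochi_1$ case, which the paper dismisses as ``the same method.''
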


\begin{proof}
We will prove the first inequality. The second one
is proved using the same method. The short exact sequence
$$
0\to  M'\to M\to M''\to 0
$$
gives a long exact sequence in cohomology
$$
0\to H^0(G,M')\to H^0(G, M)\to \ldots\to H^2(G,M)\to H^2(G,
M'')\stackrel{\delta_3}{\longrightarrow} \ldots
$$
Counting dimension we have
 $$
 \begin{array}{ll}
 \dim H^0(G,M) = & \dim H^0(G,M') + \dim H^0(G,M'') - \dim H^1(G,M')
 \medskip \\ & + \dim H^1(G,M) - \dim H^1(G,M'') + \dim H^2(G,M') \medskip \\ &
 - \dim H^2(G,M) + \dim H^2(G,M'') - \dim \im \delta_3
 \end{array}
 $$
from where we obtain
$$
\dim(M')\ochi_2(G,M')-\dim(M) \ochi_2(G,M)+
\dim(M'')\ochi_2(G,M'')\le 0.
$$
Hence,
$$
\begin{array}{c}
\ochi_2(G,M)  \ge \frac{\dim(M')\ochi_2(G,M') + \dim(M'')
\ochi_2(G,M'')}{\dim(M)} \medskip  \ge \min\{ \ochi_2(G,M'),
\ochi_2(G,M'')\}.
\end{array}
$$
\end{proof}

Let $G$ be a finitely generated profinite group and let
$$1\to R\to F\to G\to 1$$ be a presentation of $G$, where $F$ is a
$d(G)$-generated free profinite group. Then $\bar R :=
R/[R,R]\cong H_1(R, \hat{\mathbb{Z}})$ as $G$-modules and it is
called the relation module of $G$. By Shapiro's lemma $\bar R\cong
H_1(F,\hat \Z[[G]])$ and, as it is shown in \cite{Lu}, it does not
depend on the presentation.

If we decompose $\bar R$ as the product of its $p$-primary
components $\bar R= \prod_p \bar R_p\,,$ one gets $\bar R_p\cong
H_1(F,\Z_p[[G]])$. Since the augmentation ideal of $\Z_p[[F]]$ is
a free profinite $\Z_p[[F]]$-module of rank $d(G)$, we have the
exact sequence
$$
0\to \Z_p[[F]]^{d(G)}\to \Z_p[[F]] \to \Z_p \to 0
$$
 of free $\Z_p[[F]]$-modules. Applying the functor
$\Z_p[[F/R]]\widehat\otimes_{\Z_p[[F]]}-$, we obtain the following
exact sequence
\begin{equation} \label{relmod}
0\to \bar R_p\to \Z_p[[G]]^{d(G)}\stackrel{\varphi}{\to}
\Z_p[[G]]\to \Z_p\to 0,
\end{equation}
since $\ker(\varphi)=H_1(F,\Z_p[[G]])$. If $M$ is a $ \Z_p[[
G]]$-module, we denote by $d_G(M)$ its minimal number of
generators. Thus, the last exact sequence may be rewritten as
follows.
\begin{equation}\label{presentation} \Z_p[[G]]^{d_G(\bar R_p)}\to \Z_p[[G]]^{d(G)}\to \Z_p[[G]]\to \Z_p\to
0.\end{equation}

The following result is proved in \cite{Lu}.
\begin{theorem}\label{profinitedeficiency}
Let $G$ be a finitely generated profinite group. Then, for a fixed
prime $p$,
%$$\begin{array}{l}
%d_G(\bar R_p)= \min\{d(G)-1-\ochi_2(G,M)|M \textrm{\ is an
%irreducible $
%\Z_p[[G]]$-module}\} \\
%  \\
%\mbox{and} ~~ d_G(\bar R) = \max_p d_G(\bar R_p).\end{array}$$
$$
d_G(\bar R_p)= \min_M\{d(G)-1-\ochi_2(G,M)\}
$$
 where $M$ runs over all irreducible $\Z_p[[G]]$-modules, and
 $$
 d_G(\bar R) = \max_p d_G(\bar R_p).
 $$
Moreover, $\df (G)= d(G)-d_G(\bar R)$ unless $\bar R=0$   and $G$
is not free, in which case $\df(G)=d(G)-1$.
\end{theorem}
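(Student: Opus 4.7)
My plan is to convert the $4$-term exact sequence $(\ref{relmod})$ into a cohomological identity. Splitting it as $0\to I\to \Z_p[[G]]\to \Z_p\to 0$ and $0\to \bar R_p\to \Z_p[[G]]^{d(G)}\to I\to 0$ (with $I$ the augmentation ideal), applying $\Hom_{\Z_p[[G]]}(-,M)$, and using the dimension shift $\mathrm{Ext}^i_{\Z_p[[G]]}(I,M)\cong H^{i+1}(G,M)$ for $i\ge 1$ together with $\mathrm{Ext}^1(\Z_p[[G]]^{d(G)},-)=0$, I obtain the two four-term exact sequences
$$0\to H^0(G,M)\to M\to \Hom_G(I,M)\to H^1(G,M)\to 0,$$
$$0\to \Hom_G(I,M)\to M^{d(G)}\to \Hom_G(\bar R_p,M)\to H^2(G,M)\to 0.$$
Taking alternating $\F_p$-dimensions and dividing by $\dim_{\F_p}M$ collapses everything to the key identity
$$d(G)-1-\ochi_2(G,M)=\frac{\dim_{\F_p}\Hom_G(\bar R_p,M)}{\dim_{\F_p}M},$$
valid for any nonzero finite $G$-module $M$.

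Next I would relate $d_G(\bar R_p)$ to the same quantity via topological Nakayama. Let $J$ denote the Jacobson radical of $\Z_p[[G]]$; then $d_G(\bar R_p)=d(\bar R_p/J\bar R_p)$, and the quotient is a semisimple $\Z_p[[G]]/J$-module decomposing as $\bigoplus_S S^{n_S}$ over the simple $\Z_p[[G]]$-modules. A density/Chinese-remainder argument over the semisimple quotient yields $d(\bigoplus_S S^{n_S})=\max_S n_S$, while the multiplicity can be read cohomologically as the dimension over $\End_G(S)$ of $\Hom_G(\bar R_p/J\bar R_p,S)=\Hom_G(\bar R_p,S)$, matching the ratio $\dim_{\F_p}\Hom_G(\bar R_p,S)/\dim_{\F_p}S$ for simple $S$ under the conventions of the theorem. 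Combining with the preceding identity produces the asserted formula for $d_G(\bar R_p)$ (the extremum runs over simple modules and is attained). The global equality $d_G(\bar R)=\max_p d_G(\bar R_p)$ then follows from the ring decomposition $\widehat\Z[[G]]\cong \prod_p\Z_p[[G]]$ and the induced module decomposition $\bar R\cong \prod_p \bar R_p$.

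For the deficiency clause, any presentation of $G$ on $d$ generators with $s$ relations realises $\bar R$ as a $G$-module on $\le s$ generators, so $\df(G)\le d(G)-d_G(\bar R)$ is immediate. The reverse inequality is the delicate direction and where I expect the main obstacle: one must show that a minimal $G$-generating set of $\bar R$ lifts to a normal generating set of $R$ inside a free profinite cover $F\twoheadrightarrow G$ on $d(G)$ letters. My plan here is to pick lifts $r_1,\ldots,r_k\in R$, set $N:=\langle r_1,\ldots,r_k\rangle^F$, observe that the image of $N$ in $\bar R$ contains a $G$-generating set and hence $R=N\cdot\overline{[R,R]}$, then iterate down the lower central series of $R$ to get $R\subseteq N\cdot\gamma_n(R)$ for every $n$, and conclude $R=N$ by invoking residual nilpotence of closed subgroups of the free profinite $F$ (so that $\bigcap_n\gamma_n(R)=1$ and $\bigcap_n N\cdot\gamma_n(R)=N$ since $N$ is closed). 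The pathological clause ``$\bar R=0$, $G$ not free'' does not genuinely arise for finitely generated profinite $G$ (the same argument forces $R=1$, hence $G=F$); the value $\df(G)=d(G)-1$ there is a uniform convention covering that vacuous case.
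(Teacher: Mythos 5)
The first half of your proposal is essentially correct and, in substance, parallels machinery the paper itself develops: your dimension-shift computation giving $\dim\Hom_{\Z_p[[G]]}(\bar R_p,M)=(d(G)-1-\ochi_2(G,M))\dim M$ is right, and feeding it into the counting formula (\ref{numgen}) yields the first displayed equality. Note, however, that what actually comes out is $d_G(\bar R_p)=\max_M\left\lceil d(G)-1-\ochi_2(G,M)\right\rceil$, a maximum with ceilings; the ``min'' in the printed statement is evidently a misprint (compare (\ref{def(G)}) and the pro-$p$ specialization that follows it), and your write-up dodges this by speaking of ``the extremum''. Your intermediate claim $d(\bigoplus_S S^{n_S})=\max_S n_S$ is false whenever $\dim_{\mathrm{End}_{\Z_p[[G]]}(S)}S>1$ (the correct count is $\max_S\lceil n_S/\dim_{\mathrm{End}(S)}S\rceil$, i.e.\ exactly (\ref{numgen})); this error cancels against your mis-identification of the multiplicity with the ratio $\dim\Hom(\bar R_p,S)/\dim S$, so the conclusion survives, but the bookkeeping as written is wrong. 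Also, your ``easy'' inequality $\df(G)\le d(G)-d_G(\bar R)$ is argued only for presentations on $d(G)$ generators; for presentations with more generators you need a Schanuel comparison of relation modules plus $d_G(K\oplus\Z_p[[G]]^m)=d_G(K)+m$. Keep in mind the paper gives no proof at all: it quotes the theorem from Lubotzky \cite{Lu}, and everything above is the routine half of his argument.

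The genuine gap is the reverse inequality, i.e.\ passing from module generators of $\bar R$ to normal generators of $R$, which is precisely the nontrivial content of Lubotzky's theorem. Your plan --- from $R=N\,\overline{[R,R]}$ iterate down the lower central series and finish via ``residual nilpotence of closed subgroups of the free profinite $F$'' --- rests on a false lemma: already $F$ itself of rank $\ge 2$ is not residually nilpotent (its maximal pronilpotent quotient kills every non-nilpotent finite image, so $\bigcap_n\overline{\gamma_n(F)}\neq 1$), and a fortiori $\bigcap_n\overline{\gamma_n(R)}$ need not be trivial. Worse, $R$ can be perfect: take $R$ to be the prosolvable residual of $F$ (the intersection of all open normal $U$ with $F/U$ solvable); then $F/\overline{[R,R]}$ is prosolvable, so $R=\overline{[R,R]}$, hence $\bar R=0$ while $R\neq 1$, and $G=F/R$ is the free prosolvable group of rank $d\ge 2$, which is not free profinite. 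In this example $N=1$ satisfies $R=N\,\overline{[R,R]}$ yet $R\neq N$, so your iteration can never close the gap; the same example refutes your final claim that the exceptional clause ``$\bar R=0$ and $G$ not free'' never arises --- that clause is in the statement exactly because such groups exist, and for them $\df(G)=d(G)-1$, not $d(G)-d_G(\bar R)=d(G)$. Repairing this direction requires Lubotzky's substantially different argument through finite quotients (Gasch\"utz-type lifting of generators and counting), not nilpotent approximation; as it stands the proposal does not prove the deficiency formula.
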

Combining this theorem with Lemma \ref{short}, we obtain that if
$\bar R\ne 0$, then
\begin{equation}\label{def(G)}
\df (G)=\min\{1+\ochi_2(G,M)|M \textrm{\ is a finite
$G$-module}\}.
\end{equation}
If $N$ is a closed subgroup of $G$ and $\mathcal{M}_p(N)$ is the
set of all finite $\Z_p[[G]]$-modules on which $N$ acts trivially,
we introduce the following invariants:
$$
\df_{p}(G,N)=\min_{M \in \mathcal{M}_p(N)}\{1+\ochi_2(G,M)\}
$$
 and
$$
d_{p}(G,N)=\max_{M\in\mathcal{M}_p(N)}\{1+\ochi_1(G,M)\}.
$$
For simplicity, we put
$$
\df_p(G)=\df_{p}(G,\{1\}) ~~\mbox{ and }~~ d_p(G)=d_{p}(G,\{1\}).
$$
The number  $\df_p(G)$ is called the {\bf $p$-deficency} of $G$.  Comparing this invariant with the deficiency of $G$ we observe
that
\begin{equation}\label{p_def}
\df(G) \leq \df_p(G)  \leq \dim H^1(G,\F_p) -
\dim H^2(G,\F_p),
\end{equation}
so
$$
\df(G)\leq \df_p(G) \leq \ochi_2(G,\F_p) +1.
$$
When $G$ is pro-$p$, we obtain from (\ref{def(G)}) that $
\df_p(G)= \ochi_2(G,\F_p) -1$; by \cite[Corollary 5.5]{Lu}
$d_G(\bar R)=\max \{ d(G),\dim H^2(G,\F_p) \}$ from where we
deduce that
 $$\df(G)=\min\{0,\df_p(G)\}.$$

%Note that from Lemma \ref{short} it follows that if $N_1\le N_2$
%are two  normal subgroups of $G$ and $N_2/N_1$ is a pro-$p$ group,
%then $\df_{p}(G,N_1)=\df_{p}(G,N_2)$ and
%$d_{p}(G,N_1)=d_{p}(G,N_2)$.

\begin{lemma} \label{propquot}
Let  $G$ be  a profinite group and $G_{[p]}$ its maximal pro-$p$
quotient, then  $\ochi_2(G,\F_p)\le \ochi_2(G_{[p]},\F_p)$.
\end{lemma}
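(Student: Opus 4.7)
The plan is to compare both sides after unpacking the definition of $\ochi_2$ for the trivial module $M=\F_p$, which has $\Z$-length $1$:
$$
\ochi_2(H,\F_p)=-\dim H^2(H,\F_p)+\dim H^1(H,\F_p)-1
$$
for any finitely generated profinite group $H$. The first observation is that $H^1(G,\F_p)$ consists of continuous homomorphisms $G\to\F_p$, and since $\F_p$ is a pro-$p$ group, every such homomorphism factors through the maximal pro-$p$ quotient $G_p$; hence inflation gives an isomorphism $H^1(G_p,\F_p)\cong H^1(G,\F_p)$. Consequently, the desired inequality reduces to showing $\dim H^2(G_p,\F_p)\le \dim H^2(G,\F_p)$.

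To establish this I would apply the five-term exact sequence from Corollary \ref{consLHS}(1) to the extension $1\to N\to G\to G_p\to 1$, where $N$ is the kernel of the projection $G\to G_p$, with coefficients in the trivial module $\F_p$:
$$
0\to H^1(G_p,\F_p)\to H^1(G,\F_p)\to H^1(N,\F_p)^{G_p}\to H^2(G_p,\F_p)\to H^2(G,\F_p).
$$
Since the first arrow is already an isomorphism, exactness forces the connecting map $H^1(N,\F_p)^{G_p}\to H^2(G_p,\F_p)$ to be injective, and the inflation $H^2(G_p,\F_p)\to H^2(G,\F_p)$ therefore has kernel isomorphic to $H^1(N,\F_p)^{G_p}$. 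The proof thus reduces to showing $H^1(N,\F_p)^{G_p}=0$.

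This vanishing is the only real content of the argument, and it follows directly from the universal property of $G_p$. Given any $G$-invariant continuous homomorphism $\phi\colon N\to\F_p$, the subgroup $\ker\phi$ is normal in $G$ and $N/\ker\phi$ embeds in $\F_p$; hence $G/\ker\phi$ is an extension of the pro-$p$ group $G_p$ by a pro-$p$ group, so it is itself pro-$p$. By maximality of $G_p$, the projection $G\to G/\ker\phi$ factors through $G_p$, forcing $N\subseteq\ker\phi$ and $\phi=0$. No serious obstacle is anticipated; the whole argument is a clean combination of the five-term exact sequence with the defining property of the maximal pro-$p$ quotient.
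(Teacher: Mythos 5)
Your argument is correct and essentially the paper's: the paper takes $N=\ker(G\to G_p)$, observes $H^1(N,\F_p)=0$, and applies Corollary \ref{consLHS}(3) (a consequence of the same five-term exact sequence you use) to obtain $H^1(G_p,\F_p)\cong H^1(G,\F_p)$ and $H^2(G_p,\F_p)\hookrightarrow H^2(G,\F_p)$. Your only deviation is to prove the slightly weaker vanishing $H^1(N,\F_p)^{G_p}=0$ via the universal property of $G_p$, which indeed suffices for the inequality.
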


\begin{proof} Let $N$ be the kernel of the natural map $G\to G_{[p]}$. Then $H^1(N,\F_p)=0$. Thus,
the lemma follows from Corollary \ref{consLHS}(3).
\end{proof}

\begin{lemma}\label{propSylow}
Let $G$ be a profinite group. If the Sylow pro-$p$ subgroup $G_p$
of $G$ is not cyclic, we can find an open subgroup $U$ of $G$ such
that $\ochi_1(U,\F_p)\geq 1$.
\end{lemma}

\begin{proof}
Let $G=U_1>U_2>\ldots$ be a chain of open normal subgroups such
that $\cap_i U_i=G_p$. Thus we have that
$$G_p=\displaystyle \underleftarrow{\lim}\  U_i ~ \mbox{ and } ~H^1(G_p,\F_p) = \underrightarrow{\lim}\ H^1(U_i,\F_p).$$
Since $G_p$ is not cyclic, we have $\dim H^1(G_p,\F_p)
>1 $ and so $\dim  H^1(U_i,\F_p) >1$ for some $i$. Therefore
$\ochi_1(U_i,\F_p)=  \dim H^1(U_i,\F_p) -1 \geq 1 $.
\end{proof}

\begin{lemma} \label{cof}
Let $G$ be a finitely presented profinite group, $N$ a closed
subgroup and $H$ an open subgroup containing $N$.   Then
$$
\df_{p}(H,N)-1\ge [G:H] (\df_{p}(G,N)-1)
$$
and
$$
d_{p}(H,N)-1\le [G:H] (d_{p}(G,N)-1).
$$
\end{lemma}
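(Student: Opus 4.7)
My plan is to transport test modules between $H$ and $G$ via induction from the open subgroup, using Shapiro's lemma together with the dimension count that induction multiplies $\F_p$-length by $[G:H]$. Given any finite $\Z_p[H]$-module $M' \in \mathcal{M}_p(N)$, I would form the induced $\Z_p[G]$-module $M = \mathrm{Ind}_H^G M' = \Z_p[G] \otimes_{\Z_p[H]} M'$. Because $H$ is open in $G$, induction and coinduction agree, so Shapiro's lemma for profinite groups yields $H^i(G, M) \cong H^i(H, M')$ for all $i \ge 0$; combined with $\dim_{\F_p} M = [G:H] \cdot \dim_{\F_p} M'$, this gives the key identity
$$
\ochi_k(H, M') \;=\; [G:H] \cdot \ochi_k(G, M), \qquad k = 1, 2.
$$

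Next I would verify that $M$ still lies in $\mathcal{M}_p(N)$, i.e.\ that $N$ acts trivially on the induced module. Writing $M = \bigoplus_i g_i \otimes M'$ over coset representatives of $H$ in $G$, an element $n \in N$ sends $g_i \otimes m$ to $g_j \otimes (g_j^{-1} n g_i) m$, where $n g_i = g_j h$; when $N$ is normal in $G$ this reduces to $g_i \otimes n' m = g_i \otimes m$, so $M$ qualifies as a test module for both $\df_p(G, N)$ and $d_p(G, N)$. With this in hand the two inequalities fall out of the extremal definitions. Since $\ochi_2(G, M) \ge \df_p(G, N) - 1$, the identity above gives
$$
\ochi_2(H, M') \;=\; [G:H]\,\ochi_2(G, M) \;\ge\; [G:H]\bigl(\df_p(G, N) - 1\bigr),
$$
and minimizing over $M' \in \mathcal{M}_p(N)$ yields the first inequality. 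For the second, $\ochi_1(G, M) \le d_p(G, N) - 1$ propagates through the same identity in the opposite direction, and maximizing over $M'$ produces $d_p(H, N) - 1 \le [G:H]\bigl(d_p(G, N) - 1\bigr)$.

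The step I expect to be the main obstacle is the module-theoretic check that $M \in \mathcal{M}_p(N)$: if $N$ fails to be normal in $G$, the induced module picks up contributions from the conjugates $g_i^{-1} N g_i$, and triviality of the $N$-action on $M'$ need not descend to $M$. So one must either implicitly assume $N \trianglelefteq G$ (which is the situation in every application of the lemma in the paper) or replace $N$ by its normal closure in $G$ before performing the induction, observing that this replacement leaves the collection $\mathcal{M}_p(N)$ of $G$-modules unchanged.
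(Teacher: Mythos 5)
Your argument is essentially the paper's own proof: the paper applies Shapiro's lemma to $\mathrm{Coind}^G_H(M)$ (which, $H$ being open, coincides with your induced module) and uses the same scaling $\ochi_i(H,M)=[G:H]\,\ochi_i\bigl(G,\mathrm{Coind}^G_H(M)\bigr)$ before taking the min over $\mathcal{M}_p(N)$ for $\df_p$ and the max for $d_p$. Your explicit verification that the (co)induced module again lies in $\mathcal{M}_p(N)$ — which does require $N$ normal in $G$, as it is in every application of the lemma — is exactly the point the paper leaves implicit, so the proposal is correct and follows the same route.
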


\begin{proof}  Let  $M$ be a finite $\F_p[H]$-module. By Shapiro's lemma
$ H^i(H,M)=H^i(G,Coind^G_H(M))$ for all $ i$, so we obtain that
$$
\ochi_i(H,M)=[G:H]\ochi_i(G,Coind^G_H(M)) .
$$
Thus,
$$
\begin{array}{rcl}
\df_{p}(H,N)-1 & = & \min_{M \in
\mathcal{M}_p(N)}\{\ochi_2(H,M)\} \medskip \\
 & = & [G:H]\min_{M \in \mathcal{M}_p(N)}\{\ochi_2(G,Coind^G_H(M))\} \medskip \\
 & \ge & [G:H] (\df_{p}(G,N)-1)
 \end{array}
$$
and $ d_{p}(H,N)-1 =
\max_{M\in\mathcal{M}_p(N)}\{\ochi_1(H,M)\}\le [G:H]
(d_{p}(G,N)-1).$
\end{proof}
We shall finish the section with two general
technical results about the numerical invariants introduced here.

\begin{prop}\label{restr}
Let $G_1$ be a finitely generated profinite group, $N$ a normal
subgroup, $ G_2$ an open normal subgroup of index a power of $p$
in $G_1$ containing $N$ and $M$ a non zero finite
$\F_p[[G_1]]$-module. Denote by $\beta_i$ ($i=1,2$) the
restrictions maps $H^2(G_i,M)\to H^2(N,M)$ respectively. Then
$$
\dim(\im \beta_2) \ge  ( \ochi_2(G_1,M)\dim(M) +\dim(\im
\beta_1))[G_1:G_2]-\ochi_2(G_2,M)\dim(M)
$$
and, in particular,
$$
\ochi_1(G_2,M)\ge [G_1:G_2] \left(\frac{\dim(\im
\beta_1)}{\dim(M)}+\ochi_2(G_1,M)\right).
$$
\end{prop}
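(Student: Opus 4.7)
The second (``in particular'') inequality follows from the first by substituting the trivial bound $\dim\im\beta_{2}\le\dim H^{2}(G_{2},M)$ and using the identity $\dim H^{2}(G_{2},M)+\ochi_{2}(G_{2},M)\dim M=\ochi_{1}(G_{2},M)\dim M$. So the real content is the first inequality.

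Put $n=[G_{1}:G_{2}]$. The case $n=1$ is trivial, so assume $p\mid n$. Consider the short exact sequence of $\F_{p}[[G_{1}]]$-modules
\[
0\longrightarrow K\longrightarrow\mathrm{Ind}_{G_{2}}^{G_{1}}M\longrightarrow M\longrightarrow 0,
\]
in which $\mathrm{Ind}_{G_{2}}^{G_{1}}M\cong\F_{p}[G_{1}/G_{2}]\otimes M$ carries the diagonal $G_{1}$-action, the right arrow is augmentation tensored with the identity, and $K\cong I\otimes M$ for $I$ the augmentation ideal of $\F_{p}[G_{1}/G_{2}]$. The key observation is that $N\subseteq G_{2}$ acts trivially on $\F_{p}[G_{1}/G_{2}]$, so over $N$ this becomes $0\to M^{n-1}\to M^{n}\to M\to 0$ with quotient map the summation, which splits. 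Consequently the $N$-cohomology long exact sequence has zero connecting homomorphisms and $H^{j}(N,K)\cong H^{j}(N,M)^{n-1}$.

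Take the $G_{1}$-cohomology long exact sequence of the sequence above. By Shapiro $H^{i}(G_{1},\mathrm{Ind}_{G_{2}}^{G_{1}}M)=H^{i}(G_{2},M)$, with the induced map to $H^{i}(G_{1},M)$ being corestriction; since $n\equiv 0\pmod p$, $\mathrm{cor}\circ\mathrm{res}_{G_{1},G_{2}}=0$ in $\F_{p}$-cohomology. Extracting the signed form of Lemma \ref{short} from its proof and applying it to the above short exact sequence yields
\[
\ochi_{2}(G_{2},M)\dim M=\ochi_{2}(G_{1},M)\dim M+(n-1)\ochi_{2}(G_{1},K)\dim M+\dim\im\delta_{3},
\]
where $\delta_{3}\colon H^{2}(G_{1},M)\to H^{3}(G_{1},K)$ is the connecting map. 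The commutative ladder linking the $G_{1}$- and $N$-long exact sequences now plays the decisive role: the vanishing of the bottom connecting maps forces $\im\delta_{3}\subseteq\ker\bigl(H^{3}(G_{1},K)\to H^{3}(N,K)\bigr)$, and via Shapiro and Mackey one identifies $\beta_{2}$ with the composition of the restriction $H^{2}(G_{2},M)=H^{2}(G_{1},\mathrm{Ind}_{G_{2}}^{G_{1}}M)\to H^{2}(N,\mathrm{Ind}_{G_{2}}^{G_{1}}M)=H^{2}(N,M)^{n}$ with projection to a distinguished coordinate.

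The hard step is the final dimension count in this ladder, which translates the displayed identity into a lower bound for $\dim\im\beta_{2}$: combining $\mathrm{cor}\circ\mathrm{res}_{G_{1},G_{2}}=0$ with the description $H^{j}(N,K)\cong H^{j}(N,M)^{n-1}$ one aims to show
\[
(n-1)\ochi_{2}(G_{1},K)\dim M+\dim\im\delta_{3}\ge (n-1)\ochi_{2}(G_{1},M)\dim M+n\dim\im\beta_{1}-\dim\im\beta_{2},
\]
which, substituted into the identity above, yields exactly the claimed inequality. The chief obstacle is executing this count cleanly: the image of $\beta_{2}$ is hidden inside the Shapiro-transformed restriction to $N$, and must be extracted through the coordinate-by-coordinate action of $G_{1}/G_{2}$ on $H^{2}(N,M)^{n}$, keeping track simultaneously of $\im\delta_{3}$, $\im\beta_{1}$, $\im\beta_{2}$, and the cohomology of $K$ through the vertical restriction maps.
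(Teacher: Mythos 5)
Your reduction is set up correctly as far as it goes: the long exact sequence of $0\to K\to\mathrm{Ind}_{G_2}^{G_1}M\to M\to 0$ together with Shapiro's lemma does give the identity $\ochi_2(G_2,M)\dim M=\ochi_2(G_1,M)\dim M+\ochi_2(G_1,K)\dim K+\dim\im\delta_3$, and the naturality argument showing $\im\delta_3\subseteq\ker\bigl(H^3(G_1,K)\to H^3(N,K)\bigr)$ is sound. But the proof stops exactly where the content of the proposition lies. The inequality you say ``one aims to show,'' namely
$$
(n-1)\ochi_{2}(G_{1},K)\dim M+\dim\im\delta_{3}\ \ge\ (n-1)\ochi_{2}(G_{1},M)\dim M+n\dim\im\beta_{1}-\dim\im\beta_{2},
$$
is never established; you only describe the obstacles to proving it. In particular, nothing in your argument actually uses the hypothesis that $[G_1:G_2]$ is a power of $p$ beyond the remark that $\mathrm{cor}\circ\mathrm{res}=0$, and that observation alone cannot suffice: the statement is false without some genuinely $p$-group-specific input, so a correct completion must bring in more than you have on the page. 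As written, this is a programme for a proof, not a proof.

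For comparison, the paper's argument avoids induced modules entirely. It takes a free presentation $1\to R\to F_1\to G_1\to 1$ with $d(F_1)=d(G_1)$, restricts it to a presentation of $G_2$, and uses the five-term exact sequences to present $H^2(G_i,M)$ as a quotient of $H^1(R,M)^{G_i}$, whose dimension is computed via the Schreier formula and Example \ref{xi(F)}. Setting $\alpha_i$ to be the composite $H^1(R,M)^{G_i}\to H^2(G_i,M)\stackrel{\beta_i}{\to}H^2(N,M)$ and $M_i=\ker\alpha_i$, the place where the $p$-power index is decisively used is the bound $\dim M_2\le[G_1:G_2]\dim\bigl(M_2^{G_1}\bigr)$, valid because $G_1/G_2$ is a finite $p$-group acting on the $\F_p$-space $M_2$, combined with $M_2^{G_1}\le M_1$ from the commutative square relating $\alpha_1$ and $\alpha_2$. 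That fixed-point inequality (or some equivalent of it, e.g. a filtration of your module $K$ by copies of $M$ coming from the augmentation ideal of $\F_p[G_1/G_2]$, tracked compatibly with $\beta_1$, $\beta_2$ and $\delta_3$) is precisely the missing ingredient your count would need; until you supply it, the key step remains a gap.
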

\begin{proof}
 Let
$$1\to R\to F_1 \stackrel{\phi}{\to} G_1\to 1$$ be a   presentation for
$G_1$ with $d(F_1)=d(G_1)$. We have the following presentation for
$G_2$:
$$1\to R\to F_2\stackrel{\phi}{\to} {G_2}\to 1,$$
where $F_2=\phi^{-1}({G_2})$.

From Corollary \ref{consLHS}(1), we obtain the following two exact
sequences ($i=1,2$):
\begin{equation}\label{posl1}
0\to H^1(G_i,M)\to H^1(F_i,M)\to H^1(R,M)^{{G_i}}\to  H^2( G_i,M
)\to 0.
\end{equation}
Note that $\ochi_1(F_i,M)=d(F_i)-1$ (see Example \ref{xi(F)}).
Therefore, using the Schreier formula for $F_i$ and the equality
$H^0(F_i,M)=H^0(G_i,M)$ one has
\begin{equation}\label{h1rm}
\begin{array}{c} \dim H^1(R,M)^{G_i}  =  \dim H^1(F_i,M)-\ochi_2(G_i,M)\dim(M) - \\
  \\
-\dim H^0(G_i,M)=
(\ochi_1(F_1,M)[G_1:G_i]-\ochi_2(G_i,M))\dim(M).\end{array}
 \end{equation}

Let $\alpha_i$ be the composition of the map $H^1(R,M)^{{G_i}}\to
H^2({G_i},M)$ from (\ref{posl1}) and the restriction map
$\beta_i\colon H^2({G_i},M)\to H^2(N,M)$. Put
$$
M_i=\ker \alpha_i.
$$
Since the following diagram is commutative
$$
\begin{array}{ccc}
   H^1(R,M)^{{G_1}} & \stackrel{\alpha_1}{\longrightarrow}
 & H^2({N},M)\\
  \curvearrowleft && \parallel\\
 H^1(R,M)^{{G_2}}&\stackrel{\alpha_{2}}{\longrightarrow}&H^2({N},M),
\end{array}
$$
we obtain that $M_{2}^{G_1}\le M_1$. Recall that
$$
\begin{array}{lll}
\dim(M_1) & = &  \dim H^1(R,M)^{G_1}-\dim(\im \alpha_1)\\
& &\\
&=& \dim H^1(R,M)^{G_1}-\dim(\im \beta_1)\\
&&\\
&=&(\ochi_1(F_1,M) -\ochi_2(G_1,M))\dim(M)-\dim(\im \beta_1).
\end{array}
$$
Since $G_1/G_2$ is a finite $p$-group, we have $\dim(M_{2})  \le
\dim (M_2^{G_1})[G_1:G_2]$ (see page 7 in \cite{Khukhro} for
$G_1/G_2$ cyclic, the general case follows by an obvious induction
on the order of $G_1/G_2$). Hence we get
$$
\begin{array}{c}
\dim(M_{2})  \le \dim (M_2^{G_1})[G_1:G_2]\le \dim(M_1)[G_1:G_2]=\\ \\
= ( (\ochi_1(F_1,M) -\ochi_2(G_1,M))\dim(M)-\dim(\im
\beta_1))[G_1:G_2]).
\end{array}
$$
Thus, combining this with (\ref{h1rm}) we get
$$
\begin{array}{c}
\dim(\im \beta_2)=\dim(\im \alpha_2) = \dim H^1(R,M)^{G_2}-\dim(M_2)\medskip \\
\ge ( \ochi_2(G_1,M)\dim(M)+\dim (\im \beta_1))
[G_1:G_2]-\ochi_2(G_2,M) \dim(M).
\end{array} $$
Since
$$
\noindent\begin{array}{l}
\ochi_1(G_2,M)=\frac{\ochi_2(G_2,M)\dim(M)+\dim
H^2(G_2,M)}{\dim(M)} \mbox{ and } \dim H^2(G_2,M)\geq \dim{\rm
Im}(\beta_2),
\end{array}
$$
we obtain
$$
\begin{array}{rl}
\ochi_1(G_2,M) & \ge \frac{\dim(\im
\beta_2)+\ochi_2(G_2,M)\dim(M)}{\dim(M)}\medskip \\
 & \ge  (\ochi_2(G_1,M)+\frac{\dim(\im \beta_1)}{\dim(M)})[G_1:G_2].
 \end{array}
 $$
\qed \end{proof}

\begin{prop} \label{fgns} Let $G$ be a finitely generated profinite
group and $N$ a normal subgroup such that $\ochi_1(N,\F_p)$ is
non-negative and finite. Then there exists an open subgroup $V$ of
$G$ containing $N$ such that for any open subgroup $U$ of $V$
containing $N$
$$
\begin{array}{lll}
 \ochi_2(U,\F_p)& \le &
-\ochi_1(U/N,\F_p)\ochi_1(N,\F_p)-\dim H^2(U/N,\F_p)\\ &&\\
&=&-\left[\ochi_1(U,\F_p)-\ochi_1(N,\F_p)-1\right]\ochi_1(N,\F_p)-\dim
H^2(U/N,\F_p).
\end{array}
$$
\end{prop}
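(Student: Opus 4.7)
The plan is to produce an open subgroup $V\supseteq N$ for which $V/N^p[N,N]$ decomposes as the direct product $V/N\times N/N^p[N,N]$, since such a splitting is inherited by every open $U$ with $N\le U\le V$ and triggers the injection in Corollary \ref{consLHS}(6). The rest of the argument will combine that injection with the five-term exact sequence to produce the claimed bound.

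For the construction of $V$ I exploit that $\ochi_1(N,\F_p)$ is finite: the $\F_p$-vector space $H^1(N,\F_p)$ is then finite-dimensional, so by Pontryagin duality $N/N^p[N,N]$ is a finite elementary abelian $p$-group of dimension $1+\ochi_1(N,\F_p)$. The conjugation action of $G/N$ on this finite module factors through a finite quotient, yielding an open $V_1\supseteq N$ with $V_1/N$ centralising $N/N^p[N,N]$. The resulting central extension
$$1\to N/N^p[N,N]\to V_1/N^p[N,N]\to V_1/N\to 1$$
is classified by a class $\alpha\in H^2(V_1/N,N/N^p[N,N])$, and I refine $V_1$ to an open $V\supseteq N$ on which the pulled back class vanishes and a continuous complement to $N/N^p[N,N]$ can be chosen, giving $V/N^p[N,N]=V/N\times N/N^p[N,N]$. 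For any open $U$ with $N\le U\le V$, restricting a continuous splitting to $U/N$ lands in $U/N^p[N,N]$ and provides a splitting there as well, so $U/N^p[N,N]=U/N\times N/N^p[N,N]$.

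Given this, Corollary \ref{consLHS}(6) applied to $1\to N\to U\to U/N\to 1$ gives an embedding $H^2(U/N,\F_p)\oplus H^1(U/N,H^1(N,\F_p))\hookrightarrow H^2(U,\F_p)$; since $U/N$ acts trivially on $H^1(N,\F_p)$, the second summand has dimension $(1+\ochi_1(U/N,\F_p))(1+\ochi_1(N,\F_p))$. The same splitting forces the transgression $d_2^{0,1}$ in the Lyndon-Hochschild-Serre spectral sequence to vanish, so the five-term exact sequence of Corollary \ref{consLHS}(1) collapses to $0\to H^1(U/N,\F_p)\to H^1(U,\F_p)\to H^1(N,\F_p)\to 0$, giving $\ochi_1(U,\F_p)=\ochi_1(U/N,\F_p)+\ochi_1(N,\F_p)+1$ and hence the equality asserted in the statement. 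Substituting these dimension formulas into $\ochi_2(U,\F_p)=\dim H^1(U,\F_p)-\dim H^2(U,\F_p)-1$ and expanding the product $(1+\ochi_1(U/N,\F_p))(1+\ochi_1(N,\F_p))$ collapses, after the obvious cancellations, to the advertised bound $-\ochi_1(U/N,\F_p)\ochi_1(N,\F_p)-\dim H^2(U/N,\F_p)$.

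The hard part will be the construction in the second paragraph. Passing to the centraliser of $N/N^p[N,N]$ is the easy half; arranging that the classifying class $\alpha\in H^2(V_1/N,N/N^p[N,N])$ restricts to zero on an appropriate open subgroup and that a continuous complement can be chosen requires exploiting the finiteness of $N/N^p[N,N]$ inside the profinite structure of $V_1/N^p[N,N]$. Once this direct product splitting has been arranged for $V$, the cohomological part of the proof is essentially a mechanical combination of Corollary \ref{consLHS}(6) with the five-term exact sequence.
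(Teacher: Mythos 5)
Your strategy is the same as the paper's---force the splitting $U/N^p[N,N]\cong U/N\times N/N^p[N,N]$ for all open $U$ with $N\le U\le V$, then feed Corollary \ref{consLHS}(6) and the five-term sequence into the Euler-characteristic arithmetic---and your third paragraph is correct: the identity $\ochi_1(U,\F_p)=\ochi_1(U/N,\F_p)+\ochi_1(N,\F_p)+1$, the bound $\dim H^2(U,\F_p)\ge \dim H^2(U/N,\F_p)+(1+\ochi_1(U/N,\F_p))(1+\ochi_1(N,\F_p))$, and the final cancellation all check out, as does the observation that any section over $V/N$ restricts to a section over $U/N$ landing in $U/N^p[N,N]$. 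The gap is precisely the step you yourself label ``the hard part'' and then do not carry out: the existence of an open $V\supseteq N$ over which the central extension $1\to N/N^p[N,N]\to V_1/N^p[N,N]\to V_1/N\to 1$ acquires a closed complement. You reduce everything to the assertions that the class $\alpha\in H^2(V_1/N,N/N^p[N,N])$ dies on restriction to a suitable open subgroup and that its vanishing yields a continuous splitting, but you prove neither, so as written the argument is incomplete at its only substantive constructive point.

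That missing step is in fact short, and no extension classes are needed. Since $\ochi_1(N,\F_p)$ is finite, $H^1(N,\F_p)$ is finite-dimensional, so $N/N^p[N,N]$ is finite and $N^p[N,N]$ is open in $N$; hence there is an open subgroup $J$ of $G$---which you may take inside your centraliser $V_1$---with $J\cap N=N^p[N,N]$. Put $V=JN$, which is the paper's choice. The image of $J$ in $V/N^p[N,N]$ is then a closed complement to $N/N^p[N,N]$ meeting it trivially, and since $J$ centralises $N/N^p[N,N]$ the product is direct; for any open $U$ with $N\le U\le V$ the modular law gives $U=(U\cap J)N$, so the direct decomposition is inherited, which is all that Corollary \ref{consLHS}(6) and the five-term sequence require. (Your cohomological detour can also be completed: because the coefficient module is finite, every class in $H^2(V_1/N,N/N^p[N,N])$ is inflated from a finite quotient $(V_1/N)/W$ and therefore restricts to zero on the open normal subgroup $W$, and a profinite extension with finite abelian kernel and vanishing continuous class admits a closed complement; but this machinery is heavier than the direct choice of $J$.) With that insertion your proof coincides with the paper's.
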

\begin{proof}
Since $N^p[N,N]$ is open in $N$ there exists an open subgroup $J$
of $G$ such that $J\cap N=N^p[N,N]$. Put $V=JN$ and let $U$ be an
open subgroup of $V$. Then
$\ochi_1(U,\F_p)=\ochi_1(U/N,\F_p)+\ochi_1(N,\F_p)+1$. Thus using
Corollary \ref{consLHS}(5), we obtain that
$$\begin{array}{lll}
\ochi_2(U,\F_p)& = & \ochi_1(U,\F_p)-\dim H^2(U,\F_p)\\ & & \\
&\le &
(\ochi_1(U/N,\F_p)+\ochi_1(N,\F_p)+1)- \\
& & \\
& & ~~~ -(\ochi_1(U/N,\F_p)+1)(\ochi_1(N,\F_p)+1) -\dim H^2(U/N,\F_p)\\
& &
\\
& = & -\ochi_1(U/N,\F_p)\ochi_1(N,\F_p)-\dim H^2(U/N,\F_p)\\ & &\\
&=&-[\ochi_1(U,\F_p)-\ochi_1(N,\F_p)-1]\ochi_1(N,\F_p)-\dim
H^2(U/N,\F_p).\end{array}$$
\end{proof}

%%%%%%%%%%%%%%%%%%%%%%%%%%%%%%%%%%%%%%%%%%%%
\subsection{The number of generators of modules over a profinite
group}\label{number generators}
%%%%%%%%%%%%%%%%%%%%%%%%%%%%%%%%%%%%%%%%%%%%

In this subsection we describe a  way to calculate the number
of generators of a profinite $G$-module. This will be used several
times in the paper. As an application we obtain a characterization
of a profinite group to be of type $p$-$FP_m$ similar to
Lubotzky's characterization of a profinite group to have finite
deficiency.

For any irreducible $\Z_p[[G]]$-module $M$, denote by $I_M$ the
annihilator of $M$ in $\Z_p[[G]]$. If $K$ is a $\Z_p[[G]]$-module,
then $K/I_MK\cong  (\Z_p[[G]]/I_M)\widehat{\otimes}_{\Z_p[[G]]}K$
is the maximal quotient of $K$ isomorphic to a direct sum of
copies of $M$. Thus, $\Z_p[[G]]/I_M$ {is the maximal cyclic
$\Z_p[[G]]$-module isomorphic to a direct sum of copies of} $M$.

Note that the Jacobson radical $J(K)$ of a $\Z_p[[G]]$-module $K$ is equal to the
intersection of $I_MK$ and so $$K/J(K)\cong \prod_ {M \text{\ is
irreducible}} K/I_MK.$$ Thus

$$
d_G(K)=d_G(K/J(K))=\max_{M \text{\ is irreducible}}d_G(K/I_MK),
$$
and so we obtain that
$$
d_G(K)=\max_{M \text{\ is irreducible}}\left
\lceil\frac{\dim(K/I_MK)}{\dim (\Z_p[[G]]/I_M)}\right \rceil.
$$
Since $K/I_MK$ and $\Z_p[[G]]/I_M$ are direct sums of copies of
$M$, we conclude that
$$\left
\lceil\frac{\dim(K/I_MK)}{\dim (\Z_p[[G]]/I_M)}\right \rceil=\left
\lceil\frac{\dim \Hom_{\Z_p[[G]]}(K/I_MK, M)}{\dim
\Hom_{\Z_p[[G]]}(\Z_p[[G]]/I_M,M)}\right \rceil.$$  Note that
$$\Hom_{\Z_p[[G]]}(K/I_MK, M)\cong \Hom_{\Z_p[[G]]}(K, M)$$ and
$$\dim \Hom_{\Z_p[[G]]}(\Z_p[[G]]/I_M,M)=\dim
\Hom_{\Z_p[[G]]}(\Z_p[[G]],M)=\dim M.$$ Thus, we conclude that
\begin{equation}\label{numgen}
d_G(K)=\max_{M \text{\ is
irreducible}}\left \lceil\frac{\dim \Hom_{\Z_p[[G]]}(K,M)}{\dim
M}\right \rceil.
\end{equation}

The next theorem is inspired by a theorem of Lubotzky
\cite[Theorem 0.3]{Lu} that says that a finitely generated
profinite group is finitely presented if and only if there exists
$C$ such that $\dim H^2(G,M)\le C\dim M$ for any irreducible $\hat
\Z[[G]]$-module $M$.
\begin{theorem}\label{critfp} Let $G$ be a profinite group. Then
$G$ is of type $p$-$FP_m$ if and only if there exists a constant
$C$ such that $\dim H^i(G,M)\le C \dim M$ for any irreducible $
\Z_p[[G]]$-module $M$ and any $0\le i\le m$.
\end{theorem}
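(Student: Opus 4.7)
The plan is to prove the two implications separately; the forward direction is immediate from the definition, while the reverse direction goes by induction on $m$, combining the formula \eqref{numgen} with a dimension-shifting argument.

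For the forward implication, fix a profinite projective resolution $\cdots \to P_i \to \cdots \to P_0 \to \Z_p \to 0$ with $P_0,\ldots,P_m$ finitely generated; after replacing each such $P_i$ by a finitely generated free $\Z_p[[G]]$-module of which it is a direct summand, one may assume $P_i\cong \Z_p[[G]]^{n_i}$ for $i\le m$. For any finite $\Z_p[[G]]$-module $M$ the group $\Hom_{\Z_p[[G]]}(P_i,M)\cong M^{n_i}$ has dimension $n_i\dim M$, and since $H^i(G,M)$ is a subquotient of $\Hom_{\Z_p[[G]]}(P_i,M)$, the constant $C:=\max_{0\le i\le m}n_i$ does the job for every irreducible (hence finite) $\Z_p[[G]]$-module $M$.

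For the reverse implication, I would build a finitely generated projective resolution one syzygy at a time. The case $m=0$ is trivial since $\Z_p$ is cyclic over $\Z_p[[G]]$. Assume a partial resolution $P_{m-1}\to\cdots\to P_0\to \Z_p\to 0$ with each $P_i$ finitely generated and free has been constructed. Define the syzygies $K^{(i)}:=\ker(P_{i-1}\to P_{i-2})$ with $P_{-1}:=\Z_p$, so $K^{(0)}=\Z_p$. It suffices to show that $K^{(m)}$ is a finitely generated profinite $\Z_p[[G)]]$-module, for then any finitely generated free cover serves as $P_m$. By \eqref{numgen} this reduces to a uniform bound of the form $\dim \Hom_{\Z_p[[G]]}(K^{(m)},M)\le D\dim M$ over all irreducible $M$. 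Applying $\Hom_{\Z_p[[G]]}(-,M)$ to the short exact sequence $0\to K^{(m)}\to P_{m-1}\to K^{(m-1)}\to 0$ and using the projectivity of $P_{m-1}$ gives
\[
\dim \Hom_{\Z_p[[G]]}(K^{(m)},M)\le \dim \Hom_{\Z_p[[G]]}(P_{m-1},M)+\dim \mathrm{Ext}^1_{\Z_p[[G]]}(K^{(m-1)},M).
\]
Iterating dimension shifting along $0\to K^{(i)}\to P_{i-1}\to K^{(i-1)}\to 0$ for $i=m-1,m-2,\ldots,1$, and using again that each $P_{i-1}$ is projective, identifies $\mathrm{Ext}^1_{\Z_p[[G]]}(K^{(m-1)},M)\cong \mathrm{Ext}^m_{\Z_p[[G]]}(\Z_p,M)=H^m(G,M)$. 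The hypothesis then yields $D\le d_G(P_{m-1})+C<\infty$, so $K^{(m)}$ is finitely generated and the induction closes.

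The main technical point to watch is that the derived-functor formalism really behaves as expected in the category of profinite $\Z_p[[G]]$-modules: that $\Hom_{\Z_p[[G]]}(-,M)$ converts the short exact sequences between the (possibly non-finitely generated) syzygies $K^{(i)}$ into long exact $\mathrm{Ext}$ sequences, that the projectives $P_{i-1}$ are acyclic for $\mathrm{Ext}^{\ge 1}$, and that the resulting iterated isomorphism agrees with the continuous $H^m(G,M)$ recalled in the preliminaries. Once these routine compatibilities are in place (they are exactly the ones used throughout Section~2), the rest of the argument is bookkeeping and mirrors the structure of Lubotzky's finite-presentation criterion that inspires the theorem.
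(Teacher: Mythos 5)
Your proposal is correct and follows essentially the same route as the paper: both directions go by induction on $m$, and the converse hinges on the generator formula (\ref{numgen}) applied to the $m$-th syzygy of a finitely generated partial projective resolution, with the required bound $\dim \Hom_{\Z_p[[G]]}(K^{(m)},M)\le D\dim M$ extracted from the cohomological hypothesis. The only differences are cosmetic bookkeeping: you obtain the bound via one short exact sequence plus dimension shifting (so only the bound on $H^m$ enters), while the paper counts dimensions along the whole complex $\Hom_{\Z_p[[G]]}(\mathcal{R},M)$, and your forward direction uses the direct subquotient bound $\dim H^i(G,M)\le d_G(P_i)\dim M$ instead of the paper's inductive alternating-sum estimate.
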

\begin{proof} We prove the theorem by induction on $m$. The case $m=0$ is
trivial. Assume that theorem holds for $m-1$.

Suppose, first, that $G$ is of type $p$-$FP_m$. By induction,
there exists $C^\prime$ such that $\dim H^i(G,M)\le C^\prime \dim
M$ for any irreducible $ \Z_p[[G]]$-module $M$ and any $1\le i\le
m-1$. Since $G$ is of type $p$-$FP_m$, there exists an exact
sequence of finitely generated projective modules
$$
\mathcal{R}:\ \  P_m\to P_{m-1}\to \ldots \to P_0\to \Z_p\to 0.
$$
Let $M$ be  an irreducible $\Z_p[[G]]$-module.  If we apply
$\Hom_{\Z_p[[G]]} (-,M)$ to the complex $\mathcal{R}_{del}$ obtained by suppressing $\Z_p$, we obtain the complex $\Hom_{\Z_p[[G]]} (\mathcal{R}_{del},M):$
$$ 0\to \Hom_{\Z_p[[G]]}(P_0,M)\to \ldots\to  \Hom_{\Z_p[[G]]}(P_{m-1},M)\to^\phi \Hom_{\Z_p[[G]]}(P_m,M).$$
The cohomology groups $H^{i}(G,M)$ for $i\le m-1$ are the cohomology groups of this complex and   $H^m(G,M)$ is a subgroup of  $\Hom_{\Z_p[[G]]}(P_m,M)/\im \phi$. Hence
\begin{equation}\label{hom}
\sum_{i=0}^m (-1)^{m-i}\dim \Hom_{\Z_p[[G]]}(P_i,M)\ge
\sum_{i=0}^m (-1)^{m-i}\dim H^i(G,M),
\end{equation}

Therefore
$$
\begin{array}{lll}
\dim H^m(G,M)&\le& \sum_{i=0}^m \dim \Hom_{\Z_p[[G]]}(P_i,M)+
\sum_{i=0}^{m-1} \dim H^i(G,M)\\&&\\
&\le& (\sum_{i=0}^m d_G(P_i)+mC^\prime)\dim M.
\end{array}
$$
Thus we may put $C=\sum_{i=0}^m d_G(P_i)+mC^\prime$.

Suppose now that there exists a constant $C$ such that $\dim
H^i(G,M)\le C \dim M$ for any irreducible $ \Z_p[[G]]$-module $M$
and any $0\le i\le m$. By inductive assumption, there exists an
exact sequence
$$ \mathcal{R}:\ \  0\to A\to P_{m-1}\to \ldots \to P_0\to \Z_p\to 0$$
with $P_i$ finitely generated projective for $0\le i\le m-1$. We
want to show that $A$ is finitely generated, since then we can
cover it by a finitely generated free module. Let $M$ be  an irreducible $\Z_p[[G]]$-module.  If we apply
$\Hom_{\Z_p[[G]]} (-,M)$ to the complex $\mathcal{R}_{del}$ obtained by suppressing $\Z_p$, we obtain the complex $\Hom_{\Z_p[[G]]} (\mathcal{R}_{del},M):$
$$ 0\to \Hom_{\Z_p[[G]]}(P_0,M)\to \ldots\to  \Hom_{\Z_p[[G]]}(P_{m-1},M)\to^\phi \Hom_{\Z_p[[G]]}(A,M)\to 0.$$
The cohomology groups $H^{i}(G,M)$ for $i\le m-1$ are the cohomology groups of this complex  and  $\Hom_{\Z_p[[G]]}(A,M)/\im \phi$  is a subgroup of  $H^m(G,M)$ .  Hence
$$
\begin{array}{lll}
\dim \Hom_{\Z_p[[G]])}(A,M) & \le & -\sum_{i=0}^{m-1}
(-1)^{m-i}\dim \Hom_{\Z_p[[G]]}(P_i,M)\\&&\\&+& \sum_{i=0}^m \dim
(-1)^{m-i} H^i(G,M)\\&&\\ &\le&(\sum_{i=0}^{m-1}
d_G(P_i)+(m+1)C)\dim M.
\end{array}
$$
Therefore, by (\ref{numgen}), $A$ is finitely
generated.
\end{proof}

 We will need the following application of the previous theorem.
 \begin{corollary}\label{pf2} Let $G$ be a profinite group of type $p$-$FP_2$
 and $N$ a   normal subgroup of $G$. Assume that $N$ is finitely generated as a normal subgroup. Then $G/N$ is of type $p$-$FP_2$.
 \end{corollary}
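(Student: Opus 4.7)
The plan is to apply the criterion of Theorem \ref{critfp}: we must exhibit a constant $C'$ such that $\dim H^i(G/N,M)\le C'\dim M$ for all irreducible $\Z_p[[G/N]]$-modules $M$ and all $0\le i\le 2$. Since $G$ is of type $p$-$FP_2$, Theorem \ref{critfp} supplies a constant $C$ with $\dim H^i(G,M)\le C\dim M$ for $0\le i\le 2$ and all irreducible $\Z_p[[G]]$-modules $M$. An irreducible $\Z_p[[G/N]]$-module $M$ is automatically an irreducible $\Z_p[[G]]$-module (with $N$ acting trivially) and is annihilated by $p$, so $M$ is an $\F_p$-vector space with $M^N=M$.

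I would then feed this into the five-term exact sequence of Corollary \ref{consLHS}(1) applied to $N\triangleleft G$ with coefficients in $M$. Since $M^N=M$, we get $H^0(G/N,M)=H^0(G,M)$ and $H^1(G/N,M)\hookrightarrow H^1(G,M)$, which already yields $\dim H^i(G/N,M)\le C\dim M$ for $i=0,1$. For $i=2$, the same sequence gives the bound
\[
\dim H^2(G/N,M)\;\le\;\dim H^1(N,M)^{G/N}+\dim H^2(G,M)\;\le\;\dim H^1(N,M)^{G/N}+C\dim M,
\]
so the remaining task is to bound $\dim H^1(N,M)^{G/N}$ linearly in $\dim M$.

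The key step, where the hypothesis on $N$ enters, is the following. Since $N$ acts trivially on the $\F_p$-vector space $M$, continuous cohomology gives
\[
H^1(N,M)\;\cong\;\Hom_{\mathrm{cts}}(N,M)\;\cong\;\Hom_{\F_p}(N/N^p[N,N],M),
\]
and the $G/N$-invariants are $\Hom_{\F_p[[G/N]]}(N/N^p[N,N],M)$. Now, if $n_1,\dots,n_k\in N$ generate $N$ as a closed normal subgroup of $G$, their images generate $N/N^p[N,N]$ as a $\Z_p[[G]]$-module, hence (since $N$ acts trivially on this abelian quotient) as a $\F_p[[G/N]]$-module. Any $\F_p[[G/N]]$-equivariant map is determined by its values on these $k$ generators, giving the estimate
\[
\dim H^1(N,M)^{G/N}\;=\;\dim \Hom_{\F_p[[G/N]]}(N/N^p[N,N],M)\;\le\;k\dim M.
\]

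Putting the pieces together yields $\dim H^2(G/N,M)\le(k+C)\dim M$, so the constant $C'=k+C$ works for all $0\le i\le 2$; Theorem \ref{critfp} then concludes that $G/N$ is of type $p$-$FP_2$. No step looks genuinely hard; the only point that needs attention is to verify carefully that being finitely normally generated in the profinite (topological) sense really descends to finite generation of $N/N^p[N,N]$ as a $\F_p[[G/N]]$-module, and that the natural $G/N$-action on $H^1(N,M)$ is the one making the identification with $\Hom_{\F_p}(N/N^p[N,N],M)$ equivariant.
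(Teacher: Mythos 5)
Your proof is correct and follows essentially the same route as the paper: the five-term exact sequence of Corollary \ref{consLHS}(1) bounds $\dim H^1(G/N,M)$ by $\dim H^1(G,M)$ and $\dim H^2(G/N,M)$ by $\dim H^1(N,M)^{G/N}+\dim H^2(G,M)$, the invariants term is bounded by $r\dim M$ with $r$ the number of normal generators of $N$, and Theorem \ref{critfp} concludes. Your explicit identification of $H^1(N,M)^{G/N}$ with $\Hom_{\F_p[[G/N]]}(N/N^p[N,N],M)$ just spells out the step the paper leaves implicit.
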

 \begin{proof} Let $M$ be an irreducible $\Z_p[[G/N]]$-module. From Corollary \ref{consLHS}(1), we
 obtain that
 $$\dim H^1(G/N,M)\le \dim H^1(G,M)$$ and
 $$
 \dim H^2(G/N,M) \le \dim H^1(N,M)^{G/N}+\dim H^2(G,M).$$
Since $M^N=M$ we have that $H^1(N,M)=\Hom (N,M)$ (the set of
continuous homomorphisms from $N$ to $M$).  Recall that the action
of $G$ on $\Hom (N,M)$ is defined in the following way: for $x\in
G$ and $f\in \Hom (N,M)$
$$(xf)(y)=xf(y^x),\ \ y\in N.$$
Thus if $f\in \Hom (N,M)^{G/N}$, $f(y^x)=x^{-1}f(y)$. Let
$n_1,\ldots,n_l$ be generators of $N$ as a normal subgroup. Then
$N$ is generated by $\{n_i^x:x\in G, 1\le i\le   l\}$. Therefore
any element of $\Hom (N,M)^{G/N}$ is completely determined by the
values of $\{f(n_i)\}$. Thus, $\dim \Hom (N,M)^{G/N}\le l\dim M$.
This implies that
 $$
 \dim H^2(G/N,M) \le  l\dim M+\dim H^2(G,M).$$  Since $G$ is of type $p$-$FP_2$, the previous
 theorem implies that $G/N$ is also of type $p$-$FP_2$.
 \end{proof}

%%%%%%%%%%%%%%%%%%%%%%%%%%%%%%%%%%%%%%%%%%%%%%%%%
\section{Profinite groups of positive deficiency}\label{Normal subgroups of profinite groups of positive deficiency}
%%%%%%%%%%%%%%%%%%%%%%%%%%%%%%%%%%%%%%%%%%%%%%%%%

This section consists of main results on profinite groups of
positive deficiency. We  divide our results in subsections by the
reverse order on deficiency.

%%%%%%%%%%%%%%%%%%%%%%%%%%%%%%%%%
\subsection{Groups of deficiency $\geq 2$}
%%%%%%%%%%%%%%%%%%%%%%%%%%%%%%%%%%
\begin{prop}\label{def2}
Let $G$ be a finitely generated profinite  group, $N$ a normal
subgroup  and $H$ and $J$ two open subgroups containing $N$. If
$\df_{p}(H,N)\ge 2 $, then
$$\frac{d_{p}(J,N)-1 }{[G:J]}\ge \frac{1}{[G:H]}.$$
\end{prop}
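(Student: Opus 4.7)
The plan is to pass to the intersection $K=H\cap J$, use Lemma~\ref{cof} to push the $\df_p$-hypothesis from $H$ down to $K$, then use Shapiro's lemma (coinduction) to push the resulting cohomological estimate back up from $K$ to $J$, converting an $\ochi_2$-estimate into an $\ochi_1$-estimate via the trivial inequality $\ochi_1\ge\ochi_2$.

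More precisely, let $K=H\cap J$; this is open in $G$ and contains $N$, and the index identities $[G:K]=[G:H][H:K]=[G:J][J:K]$ give $[H:K]/[J:K]=[G:J]/[G:H]$, which is the right-hand side we need to produce. First I would apply Lemma~\ref{cof} with the ambient group taken to be $H$ and the open subgroup taken to be $K$; since the proof of that lemma is purely a Shapiro-coinduction computation and does not really use finite presentedness, this yields
\[
\df_{p}(K,N)-1\ \ge\ [H:K]\bigl(\df_{p}(H,N)-1\bigr)\ \ge\ [H:K],
\]
so $\ochi_{2}(K,M)\ge [H:K]$ for every $M\in\mathcal{M}_p(N)$ viewed as a $\Z_p[K]$-module.

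Next, I would fix any such $M$ and form $M^{*}=\mathrm{Coind}_{K}^{J}(M)$. Because $N\trianglelefteq J$ acts trivially on $M$, the computation $(n\cdot f)(x)=f(xn)=f(n'x)=n'f(x)=f(x)$ (where $n'=xnx^{-1}\in N\subseteq K$) shows $N$ acts trivially on $M^{*}$, so $M^{*}\in\mathcal{M}_{p}(N)$ as a $\Z_p[J]$-module. Shapiro's lemma gives $H^{i}(J,M^{*})\cong H^{i}(K,M)$ while $\dim M^{*}=[J:K]\dim M$, hence
\[
\ochi_{2}(J,M^{*})\ =\ \tfrac{1}{[J:K]}\,\ochi_{2}(K,M)\ \ge\ \frac{[H:K]}{[J:K]}\ =\ \frac{[G:J]}{[G:H]}.
\]
Since $\ochi_{1}(J,M^{*})\ge \ochi_{2}(J,M^{*})$ always holds (the difference being $\dim H^{2}(J,M^{*})/\dim M^{*}\ge 0$), we conclude
\[
d_{p}(J,N)-1\ \ge\ \ochi_{1}(J,M^{*})\ \ge\ \frac{[G:J]}{[G:H]},
\]
and dividing by $[G:J]$ gives the claim.

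There is no serious obstacle here; the only things to check carefully are (i) that $N$ acts trivially on the coinduced module, which is exactly where normality of $N$ in $G$ is used, and (ii) the bookkeeping of the indices $[G:H],[G:J],[H:K],[J:K]$, which is forced by the identity $[G:H\cap J]=[G:H][H:H\cap J]=[G:J][J:H\cap J]$.
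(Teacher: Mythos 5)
Your proof is correct and takes essentially the same route as the paper: pass to $K=H\cap J$, apply Lemma~\ref{cof} downward from $H$ to get $\df_p(K,N)-1\ge[H:K]$, then transfer back up to $J$ — your explicit Shapiro/coinduction step (including the check that $N$, being normal in $G$, acts trivially on $\mathrm{Coind}_K^J(M)$) is exactly the content of the $d_p$-half of Lemma~\ref{cof}, which the paper simply cites together with $d_p(K,N)\ge\df_p(K,N)$ instead of unwinding. The index bookkeeping $[H:K]/[J:K]=[G:J]/[G:H]$ and the inequality $\ochi_1\ge\ochi_2$ are used in both arguments in the same way.
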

\begin{proof} Using Lemma \ref{cof}, we obtain that
$$\begin{array}{lll}d_{p}((J\cap H),N) & \ge & \df_{p}((J\cap H),N)  \ge     [H:(J\cap H) ](   \df_{p}(H,N)-1)+1\\
&&\\
& \ge  &  [H:(J\cap H)]+1
\end{array}
 $$
Hence, by Lemma \ref{cof},
$$d_{p}(J,N)-1\ge \frac{d_{p}((J\cap
H),N)-1}{[J:J\cap H]}\ge \frac{[H:(J\cap H)]}{[J:J\cap H]}\ge
\frac{[G:J]}{[G:H]}  .$$
\end{proof}

\begin{prop}\label{largenorm} Let $G$ be a finitely generated profinite group and $N$
a normal subgroup of infinite index such that $H^1(N,\F_p)\ne 0$.
If $\df_{p}(G,N)\ge 2$, then  $H^1(N,\F_p)$ is infinite.
\end{prop}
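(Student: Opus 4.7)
My plan is to argue by contradiction: suppose $H^1(N,\F_p)$ is finite and nonzero. Since $\dim_{\F_p} H^0(N,\F_p)=1$, we have $0\le\ochi_1(N,\F_p)=\dim H^1(N,\F_p)-1<\infty$, which places us exactly in the hypothesis of Proposition \ref{fgns}. Let $V\supseteq N$ be the open subgroup it provides. The proof then amounts to bounding $\ochi_2(U,\F_p)$ both above and below for every open subgroup $U$ with $N\subseteq U\subseteq V$, and exploiting $[V:N]=\infty$ to drive $[G:U]$ to infinity.

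The upper bound comes directly from Proposition \ref{fgns}:
$$\ochi_2(U,\F_p)\le -\ochi_1(U/N,\F_p)\,\ochi_1(N,\F_p)-\dim H^2(U/N,\F_p)\le\ochi_1(N,\F_p),$$
where the second inequality uses $\ochi_1(U/N,\F_p)\ge -1$ and $\dim H^2(U/N,\F_p)\ge 0$; crucially, the resulting bound is a constant independent of $U$. For the lower bound I would note that $\F_p\in\mathcal{M}_p(N)$, whence $\ochi_2(U,\F_p)\ge\df_p(U,N)-1$ by the very definition of $\df_p(U,N)$, and then apply Lemma \ref{cof} together with the hypothesis $\df_p(G,N)\ge 2$ to get
$$\ochi_2(U,\F_p)\ge\df_p(U,N)-1\ge[G:U]\,(\df_p(G,N)-1)\ge[G:U].$$

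Combining the two estimates forces $[G:U]\le\ochi_1(N,\F_p)$ for every admissible $U$. But $N$ has infinite index in $G$ and $V$ is open, so $V/N$ is an infinite profinite group and therefore admits open subgroups of arbitrarily large index; pulling these back produces open $U\supseteq N$ with $[G:U]$ as large as we please, contradicting the uniform upper bound. The only point I foresee needing care is ensuring that Lemma \ref{cof}, formally stated for finitely presented $G$, genuinely applies in our merely finitely generated setting: the hypothesis $\df_p(G,N)\ge 2$ forces $\ochi_2(G,M)$ to be finite for some $M\in\mathcal{M}_p(N)$, and only this finiteness (not full finite presentation) is used in the Shapiro-lemma step underlying Lemma \ref{cof}, so I expect this is not a genuine obstruction.
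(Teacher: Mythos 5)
Your argument is correct. It uses the same two main ingredients as the paper (Proposition \ref{fgns} for the upper bound and Lemma \ref{cof} for the lower bound, with $\F_p\in\mathcal{M}_p(N)$), but it reaches the contradiction differently. The paper works at a single open subgroup $U\supseteq N$ supplied by Proposition \ref{fgns}: it invokes Proposition \ref{def2} to conclude that $\ochi_1(U/N,\F_p)\ge 0$, hence $\ochi_2(U,\F_p)\le 0$ and $\df_p(U,N)\le 1$, which contradicts $\df_p(U,N)\ge 2$ from Lemma \ref{cof}. You instead settle for the crude bound $\ochi_1(U/N,\F_p)\ge -1$, which only gives the uniform estimate $\ochi_2(U,\F_p)\le \ochi_1(N,\F_p)$, and then exploit the fact that the lower bound $\ochi_2(U,\F_p)\ge [G:U](\df_p(G,N)-1)\ge [G:U]$ grows linearly with the index while $[G:U]$ can be made arbitrarily large because $V/N$ is infinite. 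What your route buys is that Proposition \ref{def2} (and with it any discussion of whether $U/N$ actually has a quotient of order $p$) is not needed at all; what the paper's route buys is a contradiction already at one fixed subgroup. Your closing caveat about Lemma \ref{cof} being stated for finitely presented $G$ is well taken but not a real obstruction, and it is shared with the paper, which applies Lemma \ref{cof} in exactly the same finitely generated setting; as you note, the hypothesis $\df_p(G,N)\ge 2$ forces the relevant $\ochi_2(G,M)$ to be finite, and the Shapiro-lemma computation in the proof of Lemma \ref{cof} (note that $\mathrm{Coind}^G_H(M)$ again lies in $\mathcal{M}_p(N)$ since $N$ is normal) needs nothing more.
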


\begin{proof}
If  $H^1(N,\F_p)$ is  finite, then by Proposition \ref{fgns} there
exists an open subgroup $U$ containing $N$ such that
$$
\begin{array}{rcl}
\ochi_2(U,\F_p) &\le & -\ochi_1(U/N,\F_p)\ochi_1(N,\F_p)-\dim
H^2(U/N,\F_p).
\end{array}
 $$
Since by Proposition \ref{def2} we have $\ochi_1(U/N,\F_p)$ non
negative, it follows that $\ochi_2(U,\F_p)\le 0$. Thus $\df_p(U,N)
= \min_{M\in \mathcal{M}_p(N)}\{1+ \overline{\chi}_2(U,M)\}\leq
1$. By  Lemma \ref{cof},  $\df_p(G,N)\geq 2$ implies
$\df_p(U,N)\geq 2$, a contradiction. \end{proof}

Theorem 8.6.5 in \cite{RZ} originally proved by Melnikov states
that a non-trivial normal subgroup of a non-cyclic free profinite group of
infinite index is infinitely generated. The next corollary extends
this to profinite groups of deficiency $\geq 2$.

\begin{corollary}\label{def2ns} Let $G$ be a finitely generated profinite group and $N$
a    normal subgroup of infinite index such that $p$ divides
$|N|$. If $\df_p(G)\ge 2$, then some open subgroup of $N$ has
infinite $p$-abelianization. In particular, any non-trivial normal subgroup of
a profinite group of deficiency $\geq 2$ that has infinite index is infinitely generated.
\end{corollary}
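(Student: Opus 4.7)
The plan is to apply Proposition \ref{largenorm} directly to $N$. The key observation is the monotonicity $\df_p(G,N)\ge \df_p(G)$, which follows immediately from the inclusion $\mathcal{M}_p(N)\subseteq \mathcal{M}_p(\{1\})$: taking the minimum of $1+\ochi_2(G,M)$ over a smaller class of modules yields a larger (or equal) value. The hypothesis $\df_p(G)\ge 2$ therefore upgrades to $\df_p(G,N)\ge 2$.

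Next I would verify the remaining hypothesis of Proposition \ref{largenorm}, namely $H^1(N,\F_p)\ne 0$. Since $p$ divides the supernatural order $|N|$, the profinite group $N$ admits a continuous surjection onto a nontrivial finite $p$-group, and hence onto $\F_p$; therefore $H^1(N,\F_p)=\Hom(N,\F_p)\ne 0$. Combined with the assumption that $N$ has infinite index in $G$, this allows us to invoke Proposition \ref{largenorm}, which produces $\dim_{\F_p} H^1(N,\F_p)=\infty$.

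Since $N$ is trivially an open subgroup of itself, the preceding paragraph already exhibits an open subgroup of $N$ with infinite $p$-abelianization, proving the first assertion. For the ``in particular'' clause, I would note that the minimal number of topological generators of $N$ satisfies $d(N)\ge \dim_{\F_p} H^1(N,\F_p)$, so $d(N)=\infty$ and $N$ cannot be finitely generated.

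There is essentially no substantive obstacle here: once one spots the monotonicity $\df_p(G,N)\ge \df_p(G)$ coming from restricting the class of test modules, the corollary is a one-line consequence of Proposition \ref{largenorm}.
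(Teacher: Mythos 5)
There is a genuine gap at the step where you verify the hypothesis $H^1(N,\F_p)\ne 0$ of Proposition \ref{largenorm}. From $p$ dividing the supernatural order $|N|$ you cannot conclude that $N$ admits a continuous surjection onto a nontrivial finite $p$-group: a profinite group can be perfect and still have order divisible by $p$ (e.g.\ $\mathrm{SL}_2(\Z_p)$ for $p\ge 5$, or a product of nonabelian finite simple groups of order divisible by $p$), in which case $H^1(N,\F_p)=\Hom(N,\F_p)=0$. What $p\mid |N|$ does give, via Cauchy's theorem applied to a finite quotient $N/(N\cap W)$ of order divisible by $p$ (with $W\trianglelefteq_o G$), is an \emph{open} subgroup $V\le_o N$ mapping onto $\Z/p$, so only $H^1(V,\F_p)\ne 0$. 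This is exactly why the corollary asserts that \emph{some open subgroup} of $N$ has infinite $p$-abelianization rather than $N$ itself; your argument, if it worked, would prove the stronger (and in general false-from-these-hypotheses) statement that $H^1(N,\F_p)$ is infinite.

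The repair follows the paper's route: take $V$ as above and an open subgroup $U$ of $G$ with $U\cap N=V$ (for instance $U=VW$), so $H^1(U\cap N,\F_p)\ne 0$ and $U\cap N$ is normal of infinite index in $U$. Your monotonicity observation $\df_p(G,U\cap N)\ge\df_p(G)\ge 2$ is correct and useful, but it must be combined with Lemma \ref{cof} to get $\df_p(U,U\cap N)-1\ge [G:U](\df_p(G,U\cap N)-1)\ge 1$, i.e.\ $\df_p(U,U\cap N)\ge 2$; then Proposition \ref{largenorm} applied to the pair $(U,U\cap N)$ yields that $H^1(U\cap N,\F_p)$ is infinite. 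Finally, since open subgroups of finitely generated profinite groups are finitely generated, finiteness of $d(N)$ would force $H^1(U\cap N,\F_p)$ to be finite, so $N$ is infinitely generated.
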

\begin{proof}
Find an open subgroup $U$ of $G$ such that $H^1(U\cap N,\F_p)\ne
0$ and apply the previous proposition.
\end{proof}

%%%%%%%%%%%%%%%%%%%%%%%%%%%%%%%%%
\subsection{Groups of deficiency 1}
%%%%%%%%%%%%%%%%%%%%%%%%%%%%%%%%%%
The main tool of this subsection is the  following result.

\begin{theorem}\label{def1}
Let $G$ be a finitely generated profinite group, $K\le N$ two
normal subgroups of $G$ such that $|G/N|_p$ is  infinite and
$\df_{p}(G,K)\ge 1$. Let $M$ be a non zero finite
$\F_p[[G]]$-module on which $K$ acts trivially.
  Suppose that
$$
\inf \left\{\frac{d_{p}(H,K)-1 }{[G:H]_p}\  | \ N<H\le_0 G
\right\}=0,
$$
where $[G:H]_p$ is the greatest power of $p$ dividing $[G:H]$.
Then
\begin{enumerate}
\item[\em{(1)}]
 $\df_{p}(U,K)= 1$  for any open subgroup $U$ containing $N$   and
 \item[\em{(2)}] $H^2(N,M)=\{0\}$. \end{enumerate}
\end{theorem}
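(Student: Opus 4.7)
For (1), the lower bound $\df_p(U, K) \ge 1$ follows immediately from Lemma \ref{cof} applied to the hypothesis $\df_p(G, K) \ge 1$. For the upper bound I argue by contradiction: if $\df_p(U_0, K) \ge 2$ for some open $U_0 \supseteq N$, Proposition \ref{def2} (with $H = U_0$ and $K$ in the role of its normal subgroup; note $U_0 \supseteq N \supseteq K$) gives
\[
\frac{d_p(J, K) - 1}{[G:J]} \;\ge\; \frac{1}{[G:U_0]}
\]
for every open $J \supseteq N$. Since $d_p(J, K) \ge 1$ and $[G:J]_p \le [G:J]$, this forces $\frac{d_p(J, K) - 1}{[G:J]_p} \ge \frac{1}{[G:U_0]} > 0$ uniformly in $J$, contradicting the hypothesis.

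For (2), fix a nonzero $M \in \mathcal{M}_p(K)$ and any open $U \supseteq N$. Since $(U/N)_p$ has finite index in $(G/N)_p$ it is infinite, so there exist open normal subgroups $V \trianglelefteq U$ of $p$-power index containing $N$ of arbitrarily large index. Applying Proposition \ref{restr} with $G_1 = U$, $G_2 = V$ and using $\ochi_2(U, M) \ge 0$ from Part (1) together with $\ochi_1(V, M) \le d_p(V, K) - 1$, I obtain
\[
\frac{\dim \im \beta_U}{\dim M} \;\le\; \frac{d_p(V, K) - 1}{[U:V]},
\]
where $\beta_U : H^2(U, M) \to H^2(N, M)$ is the restriction map. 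The next step is to show the right-hand side can be made arbitrarily small. Starting from a sequence $H_n \supseteq N$ open in $G$ with $\frac{d_p(H_n, K) - 1}{[G:H_n]_p} \to 0$, I form $V_n$ by intersecting $H_n \cap U$ with its $U$-core and then with a normal open subgroup of $G$ of a suitable $p$-power index containing $N$, so that $V_n \trianglelefteq U$ and $[U:V_n]$ is a $p$-power; Lemma \ref{cof} together with the identity $[G:V_n]_p = [G:U]_p[U:V_n]$ yields $\frac{d_p(V_n, K) - 1}{[U:V_n]} \le C(U) \cdot \frac{d_p(H_n, K) - 1}{[G:H_n]_p} \to 0$ for a constant $C(U)$ depending only on $U$. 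Hence $\dim \im \beta_U = 0$, i.e.\ $\beta_U = 0$, for every such $U$.

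To pass from $\beta_U = 0$ to $H^2(N, M) = 0$, I use that $H^2(N, M)$ is a discrete continuous $G/N$-module and hence equals $\bigcup_{U \supseteq N} H^2(N, M)^{U/N}$. In the Lyndon--Hochschild--Serre spectral sequence for $N \to U \to U/N$ with coefficients $M$ one has $\im \beta_U = E_\infty^{0,2}(U) \subseteq E_2^{0,2}(U) = H^2(N, M)^{U/N}$. Given a class $x \in H^2(N, M)^{U/N}$, the finiteness of $M$ lets me represent $x$ through $H^2(N/K', M)$ for some $K' \trianglelefteq_o N$ chosen normal in $G$; the outgoing differentials $d_r(x)_V$ for $r \ge 2$ take values in $H^r(V/N, H^{3-r}(N, M))$, whose relevant finite piece is controlled by the finite quotient $N/K'$, and can therefore be killed by shrinking $V$ inside $G/K'$ using functoriality of the spectral sequence and the continuity of the $V/N$-action. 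Once all these differentials vanish, $x \in E_\infty^{0,2}(V) = \im \beta_V = 0$, whence $x = 0$. The main obstacle is precisely this upgrade from vanishing of the edge map to vanishing of the full $E_2^{0,2}$: it requires careful use of the finiteness of $M$ and functoriality of the LHS spectral sequence under shrinking of $V$, and is the only nontrivial step beyond a mechanical assembly of Propositions \ref{def2}, \ref{restr} and Lemma \ref{cof}.
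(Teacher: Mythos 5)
Your part (1) is fine and is essentially the paper's own argument (Lemma \ref{cof} for the lower bound, the contrapositive of Proposition \ref{def2} against the infimum hypothesis for the upper bound). The genuine gap is in part (2), at the sentence ``Since $(U/N)_p$ has finite index in $(G/N)_p$ it is infinite, so there exist open normal subgroups $V\trianglelefteq U$ of $p$-power index containing $N$ of arbitrarily large index.'' This is false: an infinite $p$-Sylow subgroup of $U/N$ does not provide finite $p$-group quotients of $U/N$. For instance $U/N$ could be a perfect profinite group such as $\SL_2(\Z_p)$ with $p\ge 5$, whose $p$-Sylow subgroups are infinite but which has no nontrivial finite $p$-group quotient, so there is no proper open $V\trianglelefteq U$ with $N\le V$ and $[U:V]$ a power of $p$ at all. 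Proposition \ref{restr} genuinely needs $[G_1:G_2]$ to be a $p$-power (its proof uses $\dim(M_2)\le \dim(M_2^{G_1})[G_1:G_2]$, which is valid only because $G_1/G_2$ is a finite $p$-group), so your scheme of fixing $G_1=U$ and finding $G_2=V_n$ of large $p$-power index inside $H_n\cap U$ cannot be carried out; moreover the proposed construction (intersecting the $U$-core of $H_n\cap U$ with a normal subgroup of $G$ of $p$-power index containing $N$) both presupposes that such $p$-power-index subgroups over $N$ exist and would in any case not give $p$-power index for the intersection.

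The paper avoids exactly this obstacle by not fixing $G_1=U$ and by arguing by contradiction: if $H^2(N,M)\ne 0$, continuity of cohomology, $H^2(N,M)=\varinjlim H^2(H_i,M)$ along a chain of open normal subgroups with $\bigcap_i H_i=N$, yields a fixed $H_j$ whose restriction map to $H^2(N,M)$ has nonzero image; then for any open normal $H$ of $G$ with $N\le H\le H_j$ one takes $P\le H_j$ with $P/H$ a $p$-Sylow subgroup of $H_j/H$, so that the pair $(G_1,G_2)=(P,H)$ satisfies the $p$-power-index hypothesis automatically and the nonzero image persists because restriction from $H_j$ factors through $P$. Proposition \ref{restr} together with part (1) then gives $d_p(H,K)-1\ge [G:H]_p/([G:H_j]_p\dim M)$, and Lemma \ref{cof} extends this to arbitrary open $H\supseteq N$, keeping the infimum bounded away from $0$ and contradicting the hypothesis. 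Finally, the step you single out as the main difficulty, upgrading ``all edge maps vanish'' to $H^2(N,M)=0$, is in fact immediate from the same continuity fact (every class in $H^2(N,M)$ is restricted from some open $U\supseteq N$), so your spectral-sequence detour is unnecessary and, as sketched, not rigorous; the missing idea in your write-up is the Sylow-subgroup trick just described, not that final passage.
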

\begin{proof} Let $U$ be an open subgroup of $G$ containing $N$.
Then by Lemma \ref{cof}, $\df_{p}(U,K)\ge 1$      and by
 Proposition \ref{def2},
$\df_{p}(U,K)\le 1$.  Thus, $\df_{p}(U,K)=1$.

Now, by way of contradiction let us assume that $H^2(N,M)\ne\{
0\}$. Let $G=H_1>H_2>\ldots$  be a chain of open normal subgroups
such that $\cap_i H_i=N$. Thus we have that
$$N=\displaystyle \underleftarrow{\lim}\  H_i.$$
Hence $H^2(N,M)=\underrightarrow{\lim}\ H^2(H_i,M)$. Since
$H^2(N,M)\ne 0$, we obtain that there exists $j$ such that the
image of the restriction map $\beta: H^2(H_j,M)\to H^2(N,M)$ is
not zero.

Let $H$ be an open normal subgroup of $G$ contained in $H_j$ which
contains $N$ and $P$ a subgroup of $H_j$ containing $H$ such that
$P/H$ is a $p$-Sylow subgroup of $H_j/H$. By (1) we have
$\df_{p}(P,K)=1$, so $\ochi_2(P,M)\ge 0$. Therefore by Proposition
\ref{restr},
$$
\ochi_1(H,M) \ge   ( \ochi_2(P,M) + \frac{\dim(\im
\beta)}{\dim(M)})[P:H] \ge \frac{[P:H]}{
\dim(M)}=\frac{[G:H]_p}{[G:H_j]_p\dim(M)}.
$$
Hence $d_{p}(H,K)\ge 1+ \frac{[G:H]_p}{[G:H_j]_p\dim(M)}$. Now, if
$H$ is an arbitrary open subgroup containing $N$, applying Lemma
\ref{cof} for $H$ and $H \cap H_j$ and the latter  inequality for
$H\cap H_j$, we get
$$
\frac{d_{p}(H,K)-1}{[G:H]_p} \ge \frac{d_{p}((H\cap H_j),K)
)-1}{[H:(H\cap H_j )][G:H]_p}\ge
 \frac{1}{[G:H_j]_p[H:(H\cap H_j )]_{p^\prime}\dim(M)}.
 $$
 \medskip
 Since $H_j$ is fixed, $\frac{1}{ [H:(H\cap H_j )]_{p^\prime}}$ has positive lower bound that gives a contradiction   with the hypothesis. Hence, $H^2(N,M)=0$.
\qed \end{proof}

\begin{corollary}\label{def1prop}
Let $G$ be a finitely generated pro-$p$ group with $\df_p(G)>0$ and $N$ a normal
subgroup of infinite index.
  Suppose that
$$\inf \left\{\frac{d(H)-1 }{[G:H]}\  | \ N<H\le_0
G \right\}=0.$$ Then $N$ is a free pro-$p$ group.
 \end{corollary}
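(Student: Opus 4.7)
The plan is to invoke Theorem \ref{def1} with $K=\{1\}$ and $M=\F_p$, and then convert the resulting vanishing $H^2(N,\F_p)=0$ into freeness of $N$ via Proposition \ref{projdim} together with Tate's classical theorem on pro-$p$ groups of cohomological dimension one.

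Translating the hypotheses to the present setting is routine. All open subgroups of the pro-$p$ group $G$ have $p$-power index, so $[G:H]_p=[G:H]$, and $|G/N|_p=|G/N|=\infty$ since $G/N$ is an infinite pro-$p$ group. For any pro-$p$ group $H$ the only simple $\F_p[[H]]$-module is $\F_p$, so every finite $\F_p[[H]]$-module admits a composition series whose factors are all $\F_p$; iterated use of Lemma \ref{short} then shows that both $\max_M \ochi_1(H,M)$ and $\min_M \ochi_2(H,M)$ over finite $\F_p[[H]]$-modules are attained at $M=\F_p$. Consequently $d_p(H,\{1\})=1+\ochi_1(H,\F_p)=d(H)$ and $\df_p(G,\{1\})=1+\ochi_2(G,\F_p)$, so the hypothesis $\inf\{(d(H)-1)/[G:H]\mid N<H\le_o G\}=0$ coincides exactly with the infimum hypothesis of Theorem \ref{def1}.

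Theorem \ref{def1} then yields $H^2(N,\F_p)=0$. Since $\F_p$ is the unique simple discrete module annihilated by $p$ for the pro-$p$ group $N$, Proposition \ref{projdim}(3) applied to $N$ gives $\cd_p(N)\le 1$, and a pro-$p$ group of cohomological $p$-dimension at most one is free pro-$p$ by Tate's theorem, completing the argument.

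The one subtle point in this reduction is the auxiliary hypothesis $\df_p(G)\ge 1$ of Theorem \ref{def1}, which in pro-$p$ language reads $d(G)\ge 1+\dim H^2(G,\F_p)$. The trivial cases $d(G)\le 1$ dispose of themselves, for then $G\cong\widehat{\mathbb{Z}}_p$ and infinite index forces $N=\{1\}$; when $d(G)\ge 2$ the expected strategy is to pass to an open subgroup $G^*$ of $G$ containing $N$ in which $\df_p$ becomes at least $1$, noting that the infimum hypothesis is preserved, and then apply Theorem \ref{def1} to $G^*$ in place of $G$.
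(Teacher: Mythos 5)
Your first two paragraphs are exactly the paper's argument: with $K=\{1\}$ and $M=\F_p$, the translation $d_p(H,\{1\})=1+\ochi_1(H,\F_p)=d(H)$ and $[G:H]_p=[G:H]$ for pro-$p$ groups is correct (the reduction to $M=\F_p$ via composition series and Lemma \ref{short} is the standard point), Theorem \ref{def1} gives $H^2(N,\F_p)=0$, and since $\F_p$ is the only simple module for the pro-$p$ group $N$ this yields $\cd_p(N)\le 1$ and hence freeness. That is precisely the paper's one-line proof, which tacitly carries along the hypothesis $\df_p(G)\ge 1$ of Theorem \ref{def1} (the corollary sits in the subsection on groups of deficiency $1$, and in every later application one has $\ochi_2(G,\F_p)=0$, i.e.\ $\df_p(G)=1$).

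The genuine gap is your third paragraph. For $d(G)\ge 2$ you propose to pass to an open subgroup $G^*$ containing $N$ with $\df_p(G^*)\ge 1$, but you give no argument for its existence, and none can be given: take $G=\Z_p\times C_p$ and $N=\{1\}\times C_p$. Then $N$ is a closed normal subgroup of infinite index, every open subgroup $H$ of $G$ containing $N$ has the form $p^k\Z_p\times C_p\cong \Z_p\times C_p$, so $d(H)=2$ while $[G:H]=p^k$ is unbounded and the displayed infimum is $0$; yet $\dim H^2(H,\F_p)=2$, so $\df_p(H)=d(H)-\dim H^2(H,\F_p)=0$ for every such $H$ (including $G$ itself), and indeed $N\cong C_p$ is not free pro-$p$ and $H^2(N,\F_p)\ne 0$. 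So the deficiency hypothesis cannot be extracted from the infimum condition; the corollary must be read with the standing assumption $\df_p(G)\ge 1$ of Theorem \ref{def1}, and your attempted removal of it should be deleted rather than patched. (A minor further point: $d(G)\le 1$ does not force $G\cong\Z_p$ --- $G$ could be finite cyclic --- but that case is vacuous since a finite group has no closed subgroup of infinite index.) With the hypothesis $\df_p(G)\ge 1$ restored, your first two paragraphs already constitute a complete proof, identical in substance to the paper's.
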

 \begin{proof} The previous theorem implies that $H^2(N,\F_p)=0$.
 \end{proof}
%%%%%%%%%%%%%%%%%%%%%%%%%%%%%%%%%%%%%%%%%%%%%%%%%%%%%%%%%%%%%%%%%%
\subsection{Finitely generated normal subgroups of profinite
groups of deficiency 1}
%%%%%%%%%%%%%%%%%%%%%%%%%%%%%%%%%%%%%%%%%%%%%%%%%%%%%%%%%%%%%%%%%%

We need the following criterion for a group of positive deficiency
to have cohomological $p$-dimension 2.
\begin{prop} \label{critcd2} Let $G$ be a finitely generated profinite group with $\df_p(G)= 1$.
Suppose that for any open subgroup $V$ of $G$ there exist an open
subgroup $U$ of $V$ such that $\ochi_2(U,\F_p)=0$. Then
$\cd_p(G)\le 2$.
\end{prop}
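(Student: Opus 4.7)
The plan is to prove $\cd_p(G)\le 2$ by verifying, via Proposition~\ref{projdim}(4), that $H^{3}(H,\F_p)=0$ for every open subgroup $H\le G$. Fix such an $H$, and use the hypothesis applied to $V=H$ to obtain an open subgroup $U\le H$ with $\ochi_2(U,\F_p)=0$. Since $\df_p(G)=1$, Lemma~\ref{cof} forces $\df_p(U)\ge 1$, so $\ochi_2(U,M)\ge 0$ for every finite $\F_p[[U]]$-module $M$, with minimum $0$ attained at $M=\F_p$. The standard bound $\dim H^{1}(U,M)\le d(U)\dim M$ (a continuous $1$-cocycle is determined by its values on the $d(U)$ generators) combined with $\ochi_2(U,M)\ge 0$ gives $\dim H^{2}(U,M)\le d(U)\dim M$, so by Theorem~\ref{critfp} the group $U$ is of type $p$-$FP_{2}$; moreover Theorem~\ref{profinitedeficiency} yields $d_{U}(\bar R_{p}^{(U)})=d(U)-1$.

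Using a minimal generating set of $\bar R_{p}^{(U)}$ together with~\eqref{relmod}, I assemble the partial free $\Z_p[[U]]$-resolution
\[
\Z_p[[U]]^{d(U)-1}\xrightarrow{\partial_{2}}\Z_p[[U]]^{d(U)}\xrightarrow{\partial_{1}}\Z_p[[U]]\xrightarrow{\varepsilon}\Z_p\to 0,
\]
exact everywhere except possibly at the leftmost term, with image of $\partial_{2}$ equal to $\bar R_{p}^{(U)}$. Injectivity of $\partial_{2}$, i.e.\ projectivity of $\bar R_{p}^{(U)}$, furnishes a projective resolution of length two and thus $\cd_p(U)\le 2$; equivalently, by dimension shift $\mathrm{Ext}^{1}_{\Z_p[[U]]}(\bar R_{p}^{(U)},M)\cong H^{3}(U,M)$, so the issue reduces to proving $\cd_p(U)\le 2$. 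Once this is done, Theorem~7.3.7 of \cite{RZ} (invariance of $\cd_p$ under open subgroups when finite) upgrades this to $\cd_p(H)\le 2$, hence $\cd_p(G)\le 2$.

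The main obstacle is therefore proving $\cd_p(U)\le 2$ directly. For this I would iterate the hypothesis inside $U$: every open $W\le U$, being open in $G$, contains an open $U''\le W$ with $\ochi_2(U'',\F_p)=0$, to which the same analysis applies. Combining Shapiro's lemma $H^{3}(U'',\F_p)\cong H^{3}(U,\mathrm{CoInd}_{U''}^{U}\F_p)$ with Proposition~\ref{restr} (applied to the pair $(U,U'')$ after replacing $U''$ by its normal core in $U$ of $p$-power index, in order to meet that proposition's hypotheses), I would propagate the vanishing $\ochi_2(U'',\F_p)=0$ through the cofinal family of modules $\mathrm{CoInd}_{U''}^{U}\F_p$ to force $H^{3}(W,\F_p)=0$ for every open $W\le U$. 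This limiting argument, linking the arbitrarily deep vanishing of $\ochi_2$ to the global homological vanishing, is the technical heart of the proof.
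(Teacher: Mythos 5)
Your proposal does not close. After the correct preliminary observations (Lemma~\ref{cof} gives $\df_p(U)\ge 1$, hence $\ochi_2(U,M)\ge 0$ for all finite $p$-primary $M$ and $d_U(\bar R_p)=d(U)-1$ by Theorem~\ref{profinitedeficiency}, so the partial free resolution over $\Z_p[[U]]$ exists), your argument reduces the statement to ``$\cd_p(U)\le 2$ for some open subgroup $U$''. That is not a reduction: $\cd_p$ is unchanged under passage to open subgroups (when finite), so this is the original problem restated. The step you then defer --- ``propagating'' the vanishing $\ochi_2(U'',\F_p)=0$ over a cofinal family to the vanishing of $H^3(W,\F_p)$ --- is the entire content of the proposition, and the tools you name do not supply it: Shapiro's lemma only rewrites $H^3(U'',\F_p)$ as $H^3(U,\mathrm{Coind}^{U}_{U''}(\F_p))$, which is exactly what is unknown, and Proposition~\ref{restr} produces lower bounds for images of restriction maps into $H^2(N,M)$ of a normal subgroup; it is the engine of Theorems~\ref{def1} and~\ref{def0}, not a device for bounding $\cd_p$ of the ambient group.

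What the paper does at this point is to keep a single fixed complex over $G$. Since $\df_p(G)=1$, by \eqref{presentation} there is an exact sequence of right modules
\begin{equation*}
0\to M\to \F_p[[G]]^{d-1}\to \F_p[[G]]^{d}\to \F_p[[G]]\to \F_p\to 0,\qquad d=d(G),
\end{equation*}
and the whole point is to show $M=0$. Given an open $U$ with $\ochi_2(U,\F_p)=0$, one tensors this sequence with $\F_p$ over $\F_p[[U]]$ and counts $\F_p$-dimensions, using that $\ochi_2(U,\F_p)=0$ means $\dim H_2(U,\F_p)=\dim H_1(U,\F_p)-1$; the count forces the induced map $h\colon M\otimes_{\F_p[[U]]}\F_p\to \F_p[G/U]^{d-1}$ to be zero, i.e.\ $M\subseteq I_U\cdot\F_p[[G]]^{d-1}$ where $I_U$ is the augmentation ideal of $\F_p[[U]]$. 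Because such $U$ can be found inside every open subgroup of $G$, these submodules form a cofinal family with trivial intersection, whence $M=0$; the truncated sequence is then a free resolution of length two and $\cd_p(G)\le 2$ follows from Proposition~\ref{projdim}. The decisive device --- one syzygy module $M$ defined over $G$ whose $U$-coinvariants are tested against arbitrarily small $U$ --- is absent from your outline; since you set up a separate presentation over each subgroup $U$, there is no single object whose vanishing can be extracted in the limit, and the ``technical heart'' you acknowledge is indeed missing.
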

\begin{proof}
  Since $\df_p(G)=1$, there exists
an exact sequence of  modules $$  \mathcal{R}: 0\to M\to
\F_p[[G]]^{d-1}\to \F_p[[G]]^d\to\F_p[[G]]\to\F_p\to 0,$$ where
$d=d(G)$ (see (\ref{presentation})). Let $V$ be an open subgroup
of $G$ and let $U\le_o V$ be such that $\ochi_2(U,\F_p)=0$.
Applying the functor $\F_p \otimes_{\F_p[[U]]}-$ to $\mathcal{R}$
we obtain the complex
$\mathcal{R}_U=\F_p \otimes_{\F_p[[U]]} \mathcal{R}$ given by
$$
0\to \F_p \otimes_{\F_p[[U]]} M \stackrel{h}{\to}
\F_p[[G/U]]^{d-1}\stackrel{g}{\to}
\F_p[[G/U]]^d\stackrel{f}{\to}\F_p[[G/U]]\to\F_p\to 0.
$$
Let $n=|G/U|$. Counting $\F_p$-dimension one gets
$$
\begin{array}{rcl}
\dim H_1(U,\F_p) & = & \dim H_1(\mathcal{R}_U) \medskip\\
 & = & \dim(\ker f)- \dim(\im g) \medskip\\
 & = & nd - n + 1 - [n(d-1)-(\dim(\im h)+ \dim H_2(\mathcal{R}_U)] \medskip \\
 & = & 1 + \dim(\im h) + \dim H_1(U,\F_p) - 1.
 \end{array}
 $$
It follows that $\im h=0$ and so
 $M=0$. Thus, $\cd_p(G)\le 2$ (cf. Proposition \ref{projdim}).
\end{proof}

\begin{remark} \rm Note that the hypothesis from the previous
proposition are equivalent to $\ochi_2(G,M)=0$ for any non zero
finite $\F_p[[G]]$-module $M$.
\end{remark}

Now we are ready to prove Theorem \ref{tmaintro}.

\begin{theorem}\label{tma} Let $p$ be a prime.
Let $G$ be a finitely generated profinite group with $\df_p(G)\ge
1$ and $N$ a finitely generated normal subgroup such that
$|G/N|_p$ is  infinite and $p$ divides $|N|$. Then $\df_p(G)=1$
and either the $p$-Sylow subgroup of $G/N$ is virtually cyclic
or the $p$-Sylow subgroup of $N$ is cyclic. Moreover,
$\cd_p(G)=2$, $\cd_p(N)=1$ and $\vcd_p(G/N)=1$.
\end{theorem}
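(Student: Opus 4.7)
The plan is to first pin down $\df_p(G) = 1$, then exploit Proposition \ref{fgns} in tandem with the Lyndon--Hochschild--Serre spectral sequence to force an Euler-characteristic identity that simultaneously controls the three cohomological dimensions and the Sylow dichotomy.

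Since $\df_p(G) \ge 1$ by hypothesis and $N$ is finitely generated, assume toward a contradiction that $\df_p(G) \ge 2$: Corollary \ref{def2ns} would then produce an open subgroup of $N$ with infinite $p$-abelianization, contradicting that open subgroups of a finitely generated profinite group are finitely generated (hence have finitely generated abelianization). Thus $\df_p(G) = 1$. Next, Proposition \ref{fgns} applied to $(G,N)$ (noting $\ochi_1(N,\F_p) = k - 1$ with $k := d(N_{[p]}) \ge 1$ finite and nonnegative) furnishes an open subgroup $V^* = JN \supseteq N$ with $J \cap N = N^p[N,N]$ such that for every open $U$ with $N \le U \le V^*$, $U/N$ acts trivially on $H^1(N,\F_p)$ and
\[
\ochi_2(U,\F_p) \le -\ochi_1(U/N,\F_p)\,\ochi_1(N,\F_p) - \dim H^2(U/N,\F_p).
\]
Lemma \ref{cof} delivers $\ochi_2(U,\F_p) \ge 0$, so restricting to a cofinal subfamily in which $d_U := d((U/N)_{[p]}) \ge 1$ pinches the inequality, yielding simultaneously $\ochi_2(U,\F_p) = 0$, $\dim H^2(U/N,\F_p) = 0$, and $\ochi_1(U/N,\F_p)\,\ochi_1(N,\F_p) = 0$. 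From $H^2(U/N,\F_p) = 0$ across this cofinal family, Proposition \ref{projdim}(4) yields $\cd_p(V^*/N) \le 1$, with equality because $(V^*/N)_p$ is infinite; hence $\vcd_p(G/N) = 1$.

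With $\cd_p(U/N) \le 1$ secured, the Lyndon--Hochschild--Serre spectral sequence for $1 \to N \to U \to U/N \to 1$ collapses at $E_2$, and using the trivial $U/N$-action on $H^1(N,\F_p) = \F_p^k$ a direct Euler-characteristic computation turns $\ochi_2(U,\F_p) = 0$ into the identity
\[
\dim H^2(N,\F_p)^{U/N} = (k-1)(1 - d_U).
\]
Nonnegativity of the left-hand side forces $k = 1$ or $d_U \le 1$, and the invariant $H^2(N,\F_p)^{U/N}$ vanishes whenever $k = 1$ or $d_U = 1$. Since on our cofinal family $d_U \ge 1$, continuity of the $V^*/N$-action on the discrete module $H^2(N,\F_p)$ then forces $H^2(N,\F_p) = 0$, giving $\cd_p(N) = 1$. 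Proposition \ref{critcd2} applied to $V^*$ via $\ochi_2(U,\F_p) = 0$ cofinally yields $\cd_p(V^*) \le 2$, and Theorem \ref{comptes} applied to $1 \to N \to V^* \to V^*/N \to 1$ (with $H^1(N,\F_p)$ finite and non-zero) then gives $\cd_p(V^*) = \cd_p(N) + \vcd_p(V^*/N) = 2$, so $\cd_p(G) = 2$.

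Finally, the Sylow dichotomy reads off the alternative $k = 1$ vs.\ $d_U = 1$: if $k = 1$, then $N_{[p]}$ is pro-cyclic and, combined with $N_p$ being free pro-$p$ (from $\cd_p(N) = 1$), $N_p$ is forced to be pro-cyclic, hence cyclic; if $k \ge 2$, then $d_U = 1$ cofinally so $(U/N)_{[p]}$ is infinite pro-cyclic, and combined with $(V^*/N)_p$ being free pro-$p$ (from $\cd_p(V^*/N) = 1$), $(V^*/N)_p$ is forced to be pro-cyclic, whence $(G/N)_p$ is virtually cyclic. The main obstacle is this last transition from ``pro-cyclic maximal pro-$p$ quotient cofinally plus free pro-$p$ Sylow'' to ``pro-cyclic Sylow'': this requires a Nielsen--Schreier rank argument showing that a Sylow of rank $\ge 2$ would inflate the maximal pro-$p$ quotient rank of sufficiently small open subgroups beyond $1$, contradicting $d_U = 1$. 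An auxiliary verification is also needed to ensure the cofinal subfamily with $d_U \ge 1$ exists; the degenerate case $d_U = 0$ cofinally is ultimately ruled out by $\vcd_p(G/N) = 1$.
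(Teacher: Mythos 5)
Your toolkit is the paper's (Corollary \ref{def2ns}, Proposition \ref{fgns}, Lemma \ref{cof}, the LHS sequence, Proposition \ref{critcd2}), but the way you assemble it has genuine gaps. The central one: from $H^2(U/N,\F_p)=0$ on a \emph{cofinal} family of open subgroups you conclude $\cd_p(V^*/N)\le 1$ via Proposition \ref{projdim}(4). That criterion requires vanishing for \emph{all} open subgroups, and cofinal vanishing is not enough: $\Z/p\times\Z_p$ has a cofinal family of open subgroups isomorphic to $\Z_p$, all with vanishing $H^2(-,\F_p)$, yet $\cd_p=\infty$. In your situation the uncovered subgroups are exactly those with $H^1(U/N,\F_p)=0$, for which the \ref{fgns} inequality only gives $\dim H^2(U/N,\F_p)\le k-1$; when $k\ge 2$ this is a real hole. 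Since your Euler-characteristic identity $\dim H^2(N,\F_p)^{U/N}=(k-1)(1-d_U)$ needs the spectral sequence to have no $E_2^{r,s}$ with $r\ge 2$ (the differentials $d_2^{1,1}$ and $d_3^{0,2}$ land in quotients of $H^3(U/N,\F_p)$), the identity, hence $H^2(N,\F_p)=0$, $\cd_p(N)=1$ and your Sylow dichotomy all inherit this gap. A second structural problem is that you never localize inside $N$: Proposition \ref{fgns} needs $\ochi_1(N,\F_p)\ge 0$, i.e.\ $H^1(N,\F_p)\ne 0$, which does not follow from $p$ dividing $|N|$; and your dichotomy is keyed to $k=d(N_{[p]})$, but $k=1$ does \emph{not} force the $p$-Sylow of $N$ to be cyclic (take a free pro-$p$ group of rank $3$ extended by an automorphism of order prime to $p$ whose fixed space on the Frattini quotient is one-dimensional: $k=1$, Sylow free of rank $3$). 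So the $k=1$ branch does not yield either alternative of the theorem. The paper avoids both problems by splitting on whether the $p$-Sylow of $N$ is cyclic, choosing an open $J$ with $\ochi_1(J\cap N,\F_p)\ge 1$ (resp.\ $=0$) and running \ref{fgns} for the pair $(J,J\cap N)$: in the cyclic case the inequality kills $H^2(U/(J\cap N),\F_p)$ for \emph{all} $U$, legitimately giving $\vcd_p(G/N)=1$, and in the non-cyclic case it bounds $\dim H^1(U/(J\cap N),\F_p)\le 1$ for all $U$, giving a virtually procyclic Sylow; $\cd_p(N)=1$ then comes from Theorem \ref{def1} (whose infimum hypothesis is checked using the virtually cyclic quotient Sylow), not from a spectral-sequence identity.

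Two further steps do not work as written. Proposition \ref{critcd2} requires, for \emph{every} open $V\le G$ (in particular those not containing $N$), some open $U\le V$ with $\ochi_2(U,\F_p)=0$; all your $U$ contain $N$, so the hypothesis is not verified --- the paper reruns the \ref{fgns} estimate inside each $V$ with the normal subgroup $V\cap N$, using that both $|N|_p$ and $|G/N|_p$ are infinite at that stage. And Theorem \ref{comptes} computes $\vcd_p(G/N)$ from $\cd_p(G)$ and $\cd_p(N)$; it is not an additivity statement, so ``$\cd_p(V^*)=\cd_p(N)+\vcd_p(V^*/N)=2$'' uses it in the wrong direction (the correct use is: if $\cd_p$ of a suitable open subgroup were $\le 1$, \ref{comptes} would force $\vcd_p$ of the quotient to be $0$, contradicting the infinite $p$-Sylow). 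Finally, your flagged ``Nielsen--Schreier'' obstacle is a detour you do not need: once all small open subgroups $U/N$ satisfy $\dim H^1(U/N,\F_p)\le 1$, the $p$-Sylow of the relevant open subgroup of $G/N$ is procyclic directly by passing to the direct limit of the $H^1(U/N,\F_p)$, with no appeal to freeness of the Sylow.
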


\begin{proof} First observe that Corollary \ref{def2ns} implies
$\df_p(G)=1$.  Note also that  by Proposition \ref{fgns} for any
open subgroup $J$ of $G$ such that $\ochi_1(N\cap J,\F_p)\geq 0$
there exists an open subgroup $V$ of $J$ containing $J\cap N$ such
that for any open subgroup $U$ of
 $V$ containing $J\cap N$,
 \begin{equation} \label{t-U/NJ}
 \ochi_2(U,\F_p) \le -\ochi_1(U/(N\cap J),\F_p)\ochi_1(N\cap J,\F_p)-\dim H^2(U/N,\F_p.)
 \end{equation}

{\it Claim 1.} Suppose that a $p$-Sylow subgroup of $N$ is not
cyclic. Then

\begin{enumerate}
\item[(i)]  $G/N$ is virtually cyclic. \item[(ii)] $\cd_p(N)=1$
and $\vcd_p(G)=2$.
\end{enumerate}

(i) By Lemma \ref{propSylow} we can find an open subgroup $J$ of
$G$ such that $\ochi_1(J\cap N,\F_p)\geq 1$. Then from Equation
(\ref{t-U/NJ}) we get $\ochi_1(U/(N\cap J),\F_p)\le 0$, because
$\ochi_2(U,\F_p) \ge \df_p(U)-1\ge 0$.
 This means that the $p$-Sylow subgroup of $G/N$ is virtually cyclic.

 \

 (ii)
Let $W$ be an open subgroup of $G$.   Since $W\cap N$ is finitely
generated and the $p$-Sylow subgroup of $W/(W\cap N)$ is
virtually cyclic, $W$ and $W\cap N$ satisfy the hypothesis of
Theorem \ref{def1}. Hence we obtain that $H^2(W\cap N,\F_p)=0$ and
by Proposition \ref{projdim}, $\cd_p( N)\le 1$. Since $p$ divides
$|N|$, we obtain  that $\cd_p(N)=1$. Hence $\vcd_p(G)=2$.

\

{\it Claim 2.}  Suppose that the $p$-Sylow subgroup of $N$ is
cyclic. Then $\vcd_p(G/N)=1$ and $\vcd_p(G)=2$.

\

We can find an open subgroup $J$ of $G$ such that $\ochi_1(J\cap
N,\F_p)= 0$. Applying Equation (\ref{t-U/NJ}) we deduce that for
any open subgroup $U$ of
 $J$ containing $J\cap N$, $\dim H^2({U/N\cap J},\F_p)= 0$. Hence,
 $\cd_p(V/(J\cap N))=1$, so $\vcd_p(G/N)=1$ and again $\vcd_p(G)=2$.

 \

 {\it Claim 3.}  $\cd_p(G)= 2$.

\

Let $V$ be an open subgroup of $G$. Since $|N|_p$ and $|G/N|_p$
are infinite, we can find an open subgroup $U$ of $V$ such that
$\ochi_1(U\cap N,\F_p)$ and $\ochi_1(U/U \cap N,\F_p)$ are
non-negative. Then putting in equation \ref{t-U/NJ} $J=G$ and
using the equality $\ochi_2(U/N,\F_p)=\ochi_1(U/N,\F_p)-\dim
H^2(U/N,\F_p)$, we get $\ochi_2(U,\F_p)\le 0$. Hence, by
Proposition \ref{critcd2}, $\cd_p(G)\le 2$.
\end{proof}

Theorem \ref{tma} is the profinite version of \cite[Theorem
4]{HS}, where $G$ was assumed pro-$p$.

\begin{corollary}\label{profinite}
Let $G$ be a finitely generated profinite group of positive
deficiency and $N$ a finitely generated normal subgroup of $G$
such that for every prime $p$ dividing $|N|$ a $p$-Sylow subgroup
$(G/N)_p$ is infinite. Then $N$ is projective.  Moreover,
 if  none of the non-trivial  $p$-Sylow subgroups $(G/N)_p$ is
virtually cyclic then $N$ is
solvable of type $\Z_\pi\rtimes \Z_\rho$ where
$\pi$ and $\rho$ are disjoint sets of primes.

\end{corollary}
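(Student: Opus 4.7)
The plan is to reduce everything to Theorem \ref{tma} applied separately at each prime. Since $\df(G)\ge 1$ forces $\df_p(G)\ge 1$ by inequality (\ref{p_def}), and the hypothesis guarantees $(G/N)_p$ infinite for every $p$ dividing $|N|$, Theorem \ref{tma} directly gives $\cd_p(N)=1$ for such primes. For primes not dividing $|N|$ one has trivially $\cd_p(N)=0$. Taking the supremum over $p$ yields $\cd(N)\le 1$, which characterizes $N$ as a projective profinite group; this settles the first assertion.

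For the Moreover clause, the assumption that no nontrivial $(G/N)_p$ is virtually cyclic rules out the first alternative in the dichotomy of Theorem \ref{tma}, forcing the $p$-Sylow subgroup of $N$ to be procyclic for each prime $p$ dividing $|N|$. Hence every Sylow subgroup of $N$ is procyclic, which means that every finite continuous quotient of $N$ is a finite $Z$-group.

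To extract the claimed structure, I would exploit the classical fact that a finite $Z$-group $Q$ is metacyclic of the form $Q=Q_1\rtimes Q_2$ where $Q_1=[Q,Q]$ is a cyclic normal Hall subgroup and $Q_2\cong Q^{ab}$ is cyclic of order coprime to $|Q_1|$. Passing to the inverse limit, $[N,N]$ is procyclic (its finite quotients are cyclic), and the set of primes $\pi$ occurring in $[N,N]$ is disjoint from the set of primes $\rho$ occurring in $N^{ab}$. In particular $[N,N]$ is a closed normal pro-$\pi$ Hall subgroup of $N$ with pro-$\rho$ quotient. Applying the profinite Schur--Zassenhaus theorem (valid because the Hall subgroups have disjoint sets of primes and are pronilpotent) yields a complement, giving $N\cong\widehat{\Z}_\pi\rtimes\widehat{\Z}_\rho$.

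The main obstacle is the last, structural, step: one must verify that the procyclic pieces fit together coherently and that the extension $1\to[N,N]\to N\to N^{ab}\to 1$ actually splits in the profinite category. The cleanest route is to note that projectivity of $N$ implies $N$ is torsion-free, so each procyclic Sylow of $[N,N]$ is isomorphic to $\Z_p$ and $[N,N]\cong\widehat{\Z}_\pi$; the disjointness of $\pi$ and $\rho$ transfers from the finite-$Z$-group level by an inverse limit argument, and the splitting is then furnished by Schur--Zassenhaus applied to the pronilpotent normal Hall subgroup $[N,N]$.
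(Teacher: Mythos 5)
Your proof is correct and follows the paper's route: positive deficiency gives $\df_p(G)\ge 1$ for every $p$ via (\ref{p_def}), Theorem \ref{tma} applied at each prime dividing $|N|$ yields $\cd_p(N)=1$ (hence $\cd(N)\le 1$ and projectivity), and the ``moreover'' hypothesis kills the first branch of the dichotomy, so all Sylow subgroups of $N$ are procyclic. The only divergence is at the final structural step: the paper simply cites \cite[Exercise 2.3.18]{RZ} for the fact that a profinite group with procyclic Sylow subgroups is of type $\Z_\pi\rtimes\Z_\rho$, whereas you reprove this fact directly -- finite continuous quotients are $Z$-groups, hence metacyclic with $[Q,Q]$ a cyclic Hall subgroup of order coprime to $|Q^{ab}|$; passing to the inverse limit makes $\overline{[N,N]}$ a procyclic normal Hall pro-$\pi$ subgroup with procyclic pro-$\rho$ quotient and $\pi\cap\rho=\emptyset$, and profinite Schur--Zassenhaus splits the extension, with torsion-freeness (from projectivity of $N$) identifying both pieces with $\Z_\pi$ and $\Z_\rho$. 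This buys a self-contained argument at the cost of a few lines; the paper's citation is shorter but leaves the verification to the reference. Both are sound.
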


\begin{proof} By the previous theorem
$N$ is projective. By \cite[Exercise 2.3.18]{RZ}, a
profinite group with cyclic Sylow subgroups is of type
$\Z_\pi\rtimes \Z_\rho$ where $\pi$ and $\rho$ are disjoint sets
of primes.
\end{proof}

\begin{corollary}\label{cora}  Let $G$ be a finitely generated profinite
group of positive deficiency whose commutator subgroup $[G,G]$ is
 finitely generated. Then $\df(G)=1$ and $[G,G]$ is projective. Moreover,
 $\cd(G)=2$
unless $G=\widehat{\mathbb{Z}}$. \end{corollary}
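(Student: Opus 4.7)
The strategy is to apply Corollary~\ref{profinite} with $N=[G,G]$, which will supply the projectivity of $[G,G]$ once we verify its Sylow hypothesis, and then to invoke Theorem~\ref{tma} to pin down $\df(G)$ and $\cd_p(G)$ at the primes $p$ dividing $|N|$. A short abelian computation of cohomology will handle the remaining primes and the degenerate case $N=1$.

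\textbf{Step 1 (Sylow infiniteness).} I will first show that $(G^{ab})_p$ is infinite for every prime $p$. If it were finite, the maximal pro-$p$ quotient $G_{[p]}$ would have finite abelianization $(G^{ab})_p$. In any pro-$p$ presentation $1\to R\to F\to G_{[p]}\to 1$ with $F$ free pro-$p$ of rank $d=d(G_{[p]})$, the image of $R$ in $F^{ab}=\mathbb{Z}_p^{d}$ is generated by the images of the $r=r(G_{[p]})$ relations (conjugation acts trivially on $F^{ab}$), hence has $\mathbb{Z}_p$-rank at most $r$; finiteness of $F^{ab}/\im R=(G^{ab})_p$ forces this rank to be exactly $d$, so $r\geq d$ and $\ochi_2(G_{[p]},\mathbb{F}_p)=d-r-1\leq -1$. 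Lemma~\ref{propquot} then gives $\ochi_2(G,\mathbb{F}_p)\leq -1$, so by~\eqref{p_def} we would get $\df(G)\leq\df_p(G)\leq 1+\ochi_2(G,\mathbb{F}_p)\leq 0$, contradicting positive deficiency.

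\textbf{Step 2 (Non-abelian case, $N\neq 1$).} For any prime $p\mid|N|$, Step~1 ensures $(G/N)_p=(G^{ab})_p$ is infinite, so Theorem~\ref{tma} applies and yields $\df_p(G)=1$, $\cd_p(G)=2$ and $\cd_p(N)=1$. Hence $\df(G)\leq\df_p(G)=1$, which combined with $\df(G)\geq 1$ gives $\df(G)=1$; and the hypothesis of Corollary~\ref{profinite} is now satisfied, so $N=[G,G]$ is projective. For a prime $p\nmid|N|$, Corollary~\ref{consLHS}(2) gives $H^i(G,\mathbb{F}_p)\cong H^i((G^{ab})_p,\mathbb{F}_p)$, and $\df(G)\geq 1$ translates via~\eqref{p_def} into $\dim H^1((G^{ab})_p,\mathbb{F}_p)-\dim H^2((G^{ab})_p,\mathbb{F}_p)\geq 1$. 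Writing $(G^{ab})_p=\mathbb{Z}_p^{a}\oplus T$ with $T$ a finite $p$-group, a K\"unneth computation from $H^*(\mathbb{Z}_p^{a},\mathbb{F}_p)=\Lambda(\mathbb{F}_p^{a})$ and the standard $\dim H^k(T,\mathbb{F}_p)=\binom{k+d(T)-1}{d(T)-1}$ shows this inequality is possible only when $T=0$ and $a\in\{1,2\}$, so $\cd_p(G)=\cd_p((G^{ab})_p)=a\leq 2$. Combining all primes, $\cd(G)=2$.

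\textbf{Step 3 (Abelian case, $N=1$) and main obstacle.} Here $G=G^{ab}=\prod_p G_p$, and the computation of Step~2 applies verbatim for every $p$, forcing each Sylow $G_p\in\{\mathbb{Z}_p,\mathbb{Z}_p^{2}\}$. If all $G_p=\mathbb{Z}_p$ then $G=\widehat{\mathbb{Z}}$ with $\cd(G)=1$; otherwise some $G_p=\mathbb{Z}_p^{2}$, giving $\cd(G)=2$. In both sub-cases $\df(G)=1$ and $[G,G]=1$ is trivially projective. The main obstacle is Step~1: ruling out a finite $p$-part of the abelianization for a profinite group of positive deficiency; the pro-$p$ bound $r\geq d$ for groups with finite abelianization, combined with the comparison Lemma~\ref{propquot}, is the key ingredient, after which everything else reduces to bookkeeping applications of Theorem~\ref{tma}, Corollary~\ref{profinite} and the K\"unneth calculation.
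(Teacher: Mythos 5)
Your proof is correct and follows essentially the same route as the paper: you show every $p$-Sylow subgroup of the abelianization is infinite via the pro-$p$ generator/relation count (the paper cites \cite{LS} for this, you reprove it, using Lemma \ref{propquot} and (\ref{p_def}) just as the paper does), then apply Theorem \ref{tma} and Corollary \ref{profinite} to $N=[G,G]$, treating the abelian case by the same $\dim H^1-\dim H^2$ computation that underlies the paper's $d(G)\le 2$ analysis. Your treatment is in fact somewhat more explicit than the paper's at the primes not dividing $|[G,G]|$ and in the abelian case, but the underlying ideas coincide.
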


\begin{proof}
First let us suppose that $ G$ is   abelian. We want to show that
 $G\cong \hat{\mathbb{Z}}\times \Z_\pi$ for some set of primes $\pi$ (possibly empty).

  If $d(G)=1$ then positive deficiency means
$0$ relations, so the result is obvious in this case.

Let $G_{[p]}$ be a maximal pro-$p$ quotient of $G$. Then
$d(G)=d(G_{[p]})$ for some $p$, and so $\df_p(G_{[p]})\ge \df(G)$
 because any presentation of $G$ serves as a
presentation for $G_{[p]}$ as a pro-$p$ group. Since
$\df_p(G_{[p]})=\dim H_1(G_{[p]},\mathbb{F}_p)-\dim
H_2(G_{[p]},\mathbb{F}_p)$ we have $\df_p(G_{[p]})\leq 0$ for
$d(G)>2$. Therefore $\df(G)\leq 0$ for $d(G)>2$.

Suppose $d(G)=2$. It suffices to prove  that $G_{[p]}$ is
non-trivial for every $p$. But this is clear since otherwise
$0\leq \df(G)\leq \df_p(G)\leq \df_p(G_{[p]})=0$, a contradiction.

\medskip
Suppose now that $G$ is not abelian. Let $G_{[p]}$ denote again the
maximal pro-$p$ quotient of $G$. Then   $\df_p(G_{[p]})\ge \df(G) >0$
and so, by \cite[Window 5, Sec.1, Lemma 3]{LS}, $G_{[p]}$ has infinite
abelianization. Hence $G$ has $\mathbb{Z}_p$ as an epimorphic
image for every $p$ and therefore has $\widehat{\mathbb{Z}}$ as a
quotient.  Then Theorem \ref{tma} implies that  $\df(G)=1$, $\cd(G)=2$ and $[G,G]$ is projective.
\end{proof}

\begin{remark} \rm The groups considered in   Subsection \ref{Ascending HNN-extensions} show that $[G,G]$ does not have to be free profinite.\end{remark}

%%%%%%%%%%%%%%%%%%%%%%%%%%%%%%%%%
\subsection{Pro-$p$ groups of subexponential subgroup growth}
%%%%%%%%%%%%%%%%%%%%%%%%%%%%%%%%%%
Let $G$ be a  profinite group. Denote by $a_n(G)$ the number of
open subgroups of $G$ of index $n$. If $G$ is finitely generated
then $a_n(G)$ is finite for all $n$. We say that a group $G$ is of
{\bf subexponential} subgroup growth if $\limsup_{n\to \infty}
a_n(G)^{1/n}=1$.  The following characterization of pro-$p$ groups
of subexponential subgroup growth is given by Lackenby.
\begin{prop}(\cite[Theorem 1.7]{Lac})\label{subexp}
Let $G$ be a finitely generated pro-$p$ group. Then $G$ is of
subexponential subgroup growth if and only if $$\limsup_{[G:U]\to
\infty}\frac{d(U)}{[G:U]}=0.$$
\end{prop}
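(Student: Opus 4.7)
The plan is to relate the subgroup-counting function $a_n(G)$ to the ranks $d(U)$ of open subgroups through the standard pro-$p$ fact that a finitely generated pro-$p$ group $H$ has exactly $(p^{d(H)}-1)/(p-1)$ maximal open subgroups, all of index $p$ (they are the hyperplanes in the Frattini quotient $H/H^p[H,H]\cong\F_p^{d(H)}$). Since $G$ is pro-$p$, only prime powers of $p$ arise as indices of open subgroups, and the whole argument reduces to estimating $a_{p^k}(G)$ in terms of the maximum rank $d^*(k):=\max\{d(U):[G:U]=p^k\}$.

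For the implication $\limsup d(U)/[G:U]=0 \Longrightarrow$ subexponential growth I would double-count pairs $(U,V)$ with $V\le U\le G$, $[G:U]=p^k$ and $[U:V]=p$. Every open $V$ of index $p^{k+1}$ occurs in some such pair, because passing to the finite $p$-group $G/V_G$ (with $V_G$ the open normal core of $V$) and applying the standard fact that in a finite $p$-group every proper subgroup is contained in one of index $p$ in it produces the required overgroup $U$. Summing the number of maximal subgroups of each such $U$ gives $a_{p^{k+1}}(G)\le a_{p^k}(G)\cdot p^{d^*(k)}$. Under the hypothesis, for every $\varepsilon>0$ eventually $d^*(k)\le\varepsilon p^k$; iterating produces $a_{p^n}(G)\le C\cdot p^{\varepsilon p^n/(p-1)}$, hence $\limsup_n a_{p^n}(G)^{1/p^n}\le p^{\varepsilon/(p-1)}$, and letting $\varepsilon\to 0^+$ yields subexponential growth.

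For the converse I would argue by contrapositive: if there exist $c>0$ and a sequence of open subgroups $U_i$ with $[G:U_i]=p^{k_i}\to\infty$ and $d(U_i)\ge c\,p^{k_i}$, then $U_i$ alone already contributes $(p^{d(U_i)}-1)/(p-1)\ge p^{c\,p^{k_i}-1}$ distinct maximal open subgroups, each of index $p^{k_i+1}$ in $G$; this forces $a_{p^{k_i+1}}(G)^{1/p^{k_i+1}}\to p^{c/p}>1$, contradicting subexponential growth. The only step demanding real care is the overgroup-extension used in the forward direction, but it is a routine fact about finite $p$-groups; everything else is just the exponential-versus-subexponential comparison that the proposition itself hints at. I expect no further obstacle beyond bookkeeping with powers of $p$.
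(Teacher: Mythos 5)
Your proposal is correct, and since the paper offers no proof of this proposition at all---it is simply quoted from Lackenby \cite{Lac}---your elementary double-counting argument is a genuine self-contained alternative to relying on that reference. Both directions check out. The forward direction rests on two facts you state accurately: every open $V$ with $[G:V]=p^{k+1}$ lies in some open $U$ with $[U:V]=p$ (via the normalizer-growth property of the finite $p$-group $G/V_G$), and each open $U$ of index $p^k$ has exactly $(p^{d(U)}-1)/(p-1)\le p^{d(U)}$ open subgroups of index $p$ (hyperplanes in $U/\Phi(U)$, with $\Phi(U)$ open since $U$ is finitely generated as an open subgroup of the finitely generated $G$). This gives $a_{p^{k+1}}(G)\le a_{p^k}(G)\,p^{d^*(k)}$, and your iteration with $d^*(k)\le\varepsilon p^k$ for $k$ large yields $\limsup_k a_{p^k}(G)^{1/p^k}\le p^{\varepsilon/(p-1)}$ for every $\varepsilon>0$; together with the trivial lower bound $a_{p^k}(G)\ge 1$ for infinite $G$ this gives $\limsup_n a_n(G)^{1/n}=1$, which is the paper's definition of subexponential growth (the finite case is degenerate on both sides and can be ignored). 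The contrapositive direction is also fine: a single $U_i$ with $d(U_i)\ge c\,p^{k_i}$ already supplies at least $p^{c p^{k_i}-1}$ distinct open subgroups of $G$ of index $p^{k_i+1}$, forcing $a_{p^{k_i+1}}(G)^{1/p^{k_i+1}}\ge p^{c/p-1/p^{k_i+1}}\to p^{c/p}>1$ along the sequence $k_i\to\infty$. The only steps demanding care are exactly the ones you flagged, and they are standard; the bookkeeping with powers of $p$ is as routine as you expect.
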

For example, since $p$-adic analytic profinite groups have finite
rank, they  are of subexponential subgroup growth (in fact, they
are of polynomial subgroup growth).

\begin{lemma}\label{wrarg} Let $G$ be a finitely generated pro-$p$ group of subexponential subgroup growth and $N$ a normal subgroup of $G$ such that $G/N$ is
virtually cyclic. Then $N$ is finitely generated.
\end{lemma}
\begin{proof}  Without loss of generality we may assume that $G/N$ is cyclic. If $N$ is not finitely generated, then $N/\Phi(N)$ is an infinite finitely
generated $\F_p[[G/N]]$-module.
Since $\F_p[[G/N]]$ is isomorphic to the ring of power series over
$\F_p$, the $\F_p[[G/N]]$-module $N/\Phi(N)$ is a direct sum of
cyclic modules. Hence there exists  a normal subgroup $M$ of $G$
such that $\Phi(N)\le M<N$ and $N/M\cong \F_p[[G/N]]$
 as $\F_p[[G/N]]$ modules. Therefore $G/M$ is isomorphic to the pro-$p$ wreath product $C_p\hat \wr \Z_p$, i.e. to the inverse limit of
 wreath products $C_p\wr \Z/p^n\Z$. But the last  group is of exponential subgroup growth, whence $G$ is of exponential subgroup growth, a contradiction.
\end{proof}

We believe that the following conjecture holds.
\begin{conjecture} Let $G$ be a finitely generated pro-$p$ group of subexponential
subgroup growth with $\ochi_2(G,\F_p)=0$. Then $G$ is  $\Z_p$ or
$\Z_p\rtimes \Z_p$.
\end{conjecture}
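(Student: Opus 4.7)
The plan is to reduce $G$ to an extension of $\Z_p$ by a free pro-$p$ group, and then classify by the rank of the kernel.

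For a pro-$p$ group, $\df_p(G) = 1 + \ochi_2(G,\F_p)$, so the hypothesis $\ochi_2(G,\F_p) = 0$ is equivalent to $\df_p(G) = 1$, i.e.\ $d(G) = \dim_{\F_p} H^2(G,\F_p) + 1$. If $d(G) = 1$, then $\dim H^2(G,\F_p) = 0$ and $G$ is a cyclic pro-$p$ group with no relations, forcing $G = \Z_p$. Assume therefore $d(G) \ge 2$. Writing $G = F/\langle\!\langle r_1,\ldots,r_{d-1}\rangle\!\rangle$ over a pro-$p$ free group $F$ of rank $d = d(G)$, the abelianisation $G^{ab}$ is $\Z_p^d$ modulo the $\Z_p$-span of $d-1$ elements and so has $\Z_p$-rank at least $1$. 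Pick a surjection $\pi\colon G\to \Z_p$ and set $N := \ker\pi$, a closed normal subgroup of infinite index.

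By Proposition \ref{subexp}, subexponential subgroup growth translates into $\limsup_{[G:U]\to\infty} d(U)/[G:U] = 0$; ranging over open $U$ with $N \le U \le_o G$ (whose indices are all powers of $p$ and unbounded since $G/N \cong \Z_p$), this yields $\inf\{(d(U)-1)/[G:U] : N < U \le_o G\} = 0$. Corollary \ref{def1prop}, whose underlying hypothesis $\df_p(G) \ge 1$ is satisfied, then gives $H^2(N,\F_p) = 0$, so $N$ is free pro-$p$ of some rank $r$. Because $\cd_p(\Z_p) = 1$, the sequence $1 \to N \to G \to \Z_p \to 1$ splits and $G = N \rtimes \Z_p$.

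Split on $r$: the case $r = 0$ gives $G = \Z_p$, contradicting $d(G) \ge 2$; the case $r = 1$ gives $N \cong \Z_p$ and $G = \Z_p \rtimes \Z_p$, the desired conclusion. The remaining task is to rule out $r \ge 2$ (including $r = \infty$). The $\Z_p$-action on $N/[N,N]N^p \cong \F_p^r$ factors through a finite cyclic $p$-subgroup of $GL_r(\F_p)$, so there is $k$ with $p^k\Z_p$ acting trivially on this elementary quotient. The hope is to promote this to an open subgroup $U \le_o G$ of the form $U = N' \times p^k\Z_p$ with $N'$ free pro-$p$ of rank $\ge 2$; pulling back the $\sim p^{(\rk(N')-1)p^n}$ open subgroups of $N'$ of index $p^n$ would produce exponentially many open subgroups of $G$, contradicting Proposition \ref{subexp}.

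The principal obstacle is precisely this upgrade. The $\Z_p$-action on $N$ itself need not factor through a finite quotient of automorphisms: the map $\Z_p \to \mathrm{Aut}(N)$ can be injective even when its composition with $\mathrm{Aut}(N) \to GL_r(\F_p)$ has finite image. In that regime one must either count non-$G$-invariant open subgroups of $G$ directly, via a Reidemeister--Schreier analysis on suitable characteristic quotients of $N$ combined with Golod--Shafarevich lower bounds on their generator numbers, or invoke the stronger (itself nontrivial) structural claim that a finitely generated pro-$p$ group of subexponential subgroup growth is $p$-adic analytic; under the latter, any torsion-free open subgroup is Poincar\'e dual at $p$, and the Euler-characteristic identity together with Lemma \ref{cof} forces analytic dimension $\le 2$, giving the dichotomy directly. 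The absence of a general handle on the $\Z_p$-action on a non-abelian free pro-$p$ kernel is the essential difficulty and the reason the statement is posed only as a conjecture.
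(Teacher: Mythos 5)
The statement you are addressing is posed in the paper only as a conjecture: the authors explicitly say they believe it holds and do not prove it; what they prove is the strictly weaker theorem that such a $G$ is free pro-$p$-by-cyclic and that all its finitely generated subgroups of infinite index are free pro-$p$. Your reduction reproduces essentially that weaker result, and by the same route the paper takes: $\df_p(G)=1+\ochi_2(G,\F_p)=1$ yields a surjection $G\to\Z_p$ with kernel $N$; Proposition \ref{subexp} gives $\inf\{(d(U)-1)/[G:U] : N<U\le_o G\}=0$; Corollary \ref{def1prop} gives $H^2(N,\F_p)=0$, so $N$ is free pro-$p$ and, by projectivity of $\Z_p$, $G=N\rtimes\Z_p$. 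One point you gloss over and the paper handles explicitly: $N$ is finitely generated, since otherwise $G/\Phi(N)$ maps onto $C_p\wr\Z_p$, which has exponential subgroup growth; this disposes of your case $r=\infty$, but of nothing more.

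The genuine gap is the one you yourself flag: the case $2\le\rk(N)<\infty$ is not excluded, and your two proposed ways of closing it do not work as stated. The upgrade to a direct product $U=N'\times p^k\Z_p$ fails because triviality of the induced action on $N/N^p[N,N]$ does not bound the image of $\Z_p\to\mathrm{Aut}(N)$, which may be infinite; an open subgroup $H\le_o N$ then only contributes an open subgroup of $G$ when it is invariant under conjugation by some $t^{p^s}$, and for a genuinely infinite action most $H$ are not, so no exponential count of open subgroups of $G$ is obtained. The fallback that a finitely generated pro-$p$ group of subexponential subgroup growth is $p$-adic analytic cannot be invoked either: only polynomial subgroup growth is known to characterize $p$-adic analytic pro-$p$ groups, and the subexponential version is itself open (the paper quotes Lackenby's criterion, Proposition \ref{subexp}, precisely because nothing stronger is available). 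So your argument establishes only what the paper's theorem already establishes, and the missing step is exactly the reason the statement remains a conjecture; it is not something the paper proves, nor does your proposal.
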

We can prove the following result.
\begin{theorem}
Let $G$ be a finitely generated pro-$p$ group of subexponential
subgroup growth. If $\ochi_2(G,\F_p)=0$, then $G$ is  (finitely
generated free pro-$p$) by cyclic and all finitely generated
subgroups of infinite index are free pro-$p$ groups.
\end{theorem}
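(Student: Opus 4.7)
The plan is to locate a finitely generated normal subgroup of infinite index in $G$, feed it into the pro-$p$ case of Theorem \ref{tma} to obtain the free-pro-$p$-by-cyclic structure, and then control finitely generated subgroups of infinite index by combining that structure with a direct subexponential-growth calculation.

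If $d(G) = 1$, then $G$ is topologically cyclic and $\ochi_2(G,\F_p) = 0$ forces $G \cong \Z_p$, in which case the conclusions are immediate. Henceforth assume $d(G) \geq 2$ and fix a surjection $\phi : G \twoheadrightarrow \Z_p$; put $N = \ker\phi$ and $G_k = \phi^{-1}(p^k\Z_p)$, which is open of index $p^k$. The Lyndon--Hochschild--Serre five-term sequence for $1 \to N \to G_k \to p^k\Z_p \to 1$, together with $H^2(\Z_p,\F_p) = 0$, yields
\[ d(G_k) \;=\; 1 + \dim_{\F_p} H^1(N,\F_p)^{p^k\Z_p}. \]
Set $M = H^1(N,\F_p)$, a discrete $\F_p[[T]]$-module with $T = \sigma - 1$ for $\sigma$ a topological generator of $G/N \cong \Z_p$. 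Lackenby's criterion (Proposition \ref{subexp}) gives $d(G_k)/p^k \to 0$, so $\dim_{\F_p} M^{T^{p^k}} = o(p^k)$.

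The central step is to turn this asymptotic bound into finite generation of $N$. Let $M^*$ denote the Pontryagin dual of $M$, a compact $\F_p[[T]]$-module; duality gives $\dim_{\F_p} M^*/T^{p^k}M^* = \dim_{\F_p} M^{T^{p^k}} < \infty$, and in particular $\dim M^*/TM^* < \infty$. Topological Nakayama over the complete Noetherian local ring $\F_p[[T]]$ then shows $M^*$ is finitely generated, and the structure theorem over the PID $\F_p[[T]]$ gives
\[ M^* \;\cong\; \F_p[[T]]^a \;\oplus\; \bigoplus_{i=1}^s \F_p[[T]]/(T^{b_i}), \]
whence $\dim M^*/T^{p^k}M^* = ap^k + O(1)$. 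The bound $o(p^k)$ forces $a = 0$, so $M^*$ is finite-dimensional over $\F_p$; hence $M$ is finite-dimensional and $N$ is topologically finitely generated. Now Theorem \ref{tma} applies ($\df_p(G) = 1$, $(G/N)_p = \Z_p$ is infinite, $p \mid |N|$) and gives $\cd_p(N) \leq 1$; therefore $N$ is a projective pro-$p$ group, i.e.\ free pro-$p$. Since $\Z_p$ is projective pro-$p$, the extension splits and $G = N \rtimes \Z_p$, confirming the free-pro-$p$-by-cyclic structure.

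For the subgroup statement, let $H$ be a finitely generated closed subgroup of $G$ of infinite index. The intersection $H \cap N$ is closed in the free pro-$p$ group $N$ and hence is itself free pro-$p$; if $H \subseteq N$ we are done. Otherwise $H/(H \cap N)$ is a nontrivial closed subgroup of $\Z_p$ (so $\cong \Z_p$), and $[N : H \cap N] = \infty$. It now suffices to rule out $\rk(N) \geq 2$: because the image of $\Z_p \to \mathrm{Aut}(N)$ is pro-$p$, it lies in the principal congruence subgroup of $\mathrm{GL}_{\rk(N)}(\Z_p)$, so the induced action on $N^{\mathrm{ab}} \otimes \F_p$ is trivial. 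Combining this with the Schreier formula for free pro-$p$ groups, for any open subgroup $U = U' \rtimes p^k\Z_p$ of $G$ with $U' \leq N$ open of index $p^{m-k}$ one gets $d(U) = 2 + p^{m-k}(\rk(N) - 1)$, and $d(U)/[G:U] \to (\rk(N)-1)/p^k$ as $m \to \infty$ for fixed $k$. Subexponentiality forces this limit to be $0$ for every $k$, so $\rk(N) \leq 1$. Hence $G$ is either $\Z_p$ or $\Z_p \rtimes \Z_p$, both of which are torsion-free $p$-adic analytic of dimension at most $2$; their finitely generated closed subgroups of infinite index are thus $\{1\}$ or $\Z_p$, both free pro-$p$. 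The principal obstacle of the plan is the Pontryagin-duality-plus-Nakayama argument that promotes $d(G_k) = o(p^k)$ to finite generation of $N$; the remainder reduces either to Theorem \ref{tma} or to the explicit rank-one consequence of subexponentiality just described.
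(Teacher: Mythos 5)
Your first half is essentially correct, and it takes a slightly different route from the paper: you obtain finite generation of $N=\ker(G\twoheadrightarrow\Z_p)$ via the five-term sequence, Lackenby's criterion (Proposition \ref{subexp}), Pontryagin duality, topological Nakayama and the structure theorem over $\F_p[[T]]$, whereas the paper argues that an infinitely generated $N$ would make $G/\Phi(N)$ map onto $C_p\wr\Z_p$, which has exponential subgroup growth; either way one then gets that $N$ is free pro-$p$ (you via Theorem \ref{tma}, the paper via Corollary \ref{def1prop}), hence the free-pro-$p$-by-cyclic structure. Up to this point the proposal is sound.

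The second half, however, has a fatal gap. The key step is your claim that, since the image of $\Z_p\to\mathrm{Aut}(N)$ is pro-$p$, it lies in the principal congruence subgroup of $\mathrm{GL}_{\rk(N)}(\Z_p)$ and therefore acts trivially on $N/\Phi(N)$. This is false for $\rk(N)\ge 2$: a pro-$p$ subgroup of $\mathrm{GL}_n(\Z_p)$ is only contained in some Sylow pro-$p$ subgroup, namely the preimage of the upper unitriangular subgroup of $\mathrm{GL}_n(\F_p)$, which strictly contains the congruence subgroup; for instance $t$ may act on $N/\Phi(N)\cong\F_p^2$ by a single Jordan block. Consequently your formula $d(U)=2+p^{m-k}(\rk(N)-1)$ breaks down: by the five-term sequence $d(U)=1+\dim H^1(U',\F_p)^{t^{p^k}}$, and when the action is nontrivial the invariants can be much smaller than $d(U')$ (already for $U'=N$ they can be one-dimensional), and you would in any case need triviality of the action on $U'/\Phi(U')$ for each $U'$, not just on $N/\Phi(N)$. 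So subexponential growth does not rule out $\rk(N)\ge 2$ by this argument; indeed, if it did, you would have proved the Conjecture stated immediately before this theorem (that $G$ is $\Z_p$ or $\Z_p\rtimes\Z_p$), which the authors explicitly leave open. The paper proves the statement about finitely generated subgroups $K$ of infinite index quite differently: it establishes a growth estimate ($\ochi_1(U,\F_p)\ge\ochi_1(V,\F_p)-1+[V:U]$ whenever the image of $H_2(U,\F_p)\to H_2(V,\F_p)$ is nontrivial, using the length-two free resolution coming from $\cd_p=2$ and $p$-deficiency one), then, assuming $H^2(K,\F_p)\ne 0$, produces open subgroups $U\supseteq K$ with $d(U)$ arbitrarily large, applies the Golod--Shafarevich inequality to the normal closure of $K$ in $U$, and finally invokes Corollary \ref{def1prop} to contradict subexponential subgroup growth. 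Your treatment of this part needs to be replaced by an argument of that kind.
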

\begin{proof}
Since $G$ is a pro-$p$ group $\df_p(G)=\ochi_2(G,\F_p)+1=1$. Hence
there exists a map of $G$ onto $\Z_p$. Let $N$ be the kernel of
this map. By Lemma \ref{wrarg}, $N$ is finitely generated.
Applying Corollary \ref{def1prop}, we obtain that $N$ is a
finitely generated  free pro-$p$ group. In particular $\cd (G)=2$
and $\ochi_2(U,\F_p)=0$ for all open subgroups.

\

{\it Claim} Let $V$ be an open subgroup of $G$ and $U$  an open
normal subgroup of $V$. Assume that the image of $H_2(U,\F_p)$ in
$H_2(V,\F_p)$ is not trivial. Then $\ochi_1(U,\F_p)\ge
\ochi_1(V,\F_p)-1+[V:U]$.

\

Let $d=d(V)$. Since $V$ is  a pro-$p$ of cohomological dimension 2
and  its $p$-deficiency is 1, we have the following exact sequence
of right modules:

\begin{equation}\label{seqG}
0\to \F_p[[V]]^{d-1}\to  \F_p[[V]]^{d}\to \F_p[[V]]\to \F_p\to 0.
\end{equation}

Applying the functor  $- \widehat{\otimes}_{\F_p[[U]]}\F_p$ we
obtain the complex
\begin{equation} \label{tensU} 0\to
\F_p[V/U]^{d-1}\stackrel{\alpha}{\longrightarrow}
\F_p[V/U]^{d}\stackrel{\beta}{\longrightarrow} \F_p [V/U] \to
\F_p\to 0,\end{equation} where $H_2(U,\F_p)\cong \ker \alpha$ and
$H_1(U,\F_p)\cong \ker \beta/\im \alpha $. Note that we can
calculate $H_1(V,\F_p)$ and $H_2(V,\F_p)$ either tensoring
(\ref{tensU}) with $-\otimes_{\F_p[V/U]}\F_p$ or tensoring
(\ref{seqG}) with $-\widehat{\otimes}_{\F_p[[V]]}\F_p$, once the
obtained complexes are isomorphic.

Let $T$ be a transversal of $U$ in $V$. Denote by $a$ the element
$\sum_{t\in T} t$ of $\F_p[V]$. Since $d=d(V)$, the rank of
$$(\F_p[V/U]^{d}/\im \alpha)\widehat{\otimes}_{\F_p[[V]]} \F_p\cong H_1(V,\F_p)$$ is
$d$. Thus, $\im \alpha$ is contained in the Jacobson radical of
$V$-module $\F_p[V/U]^{d}$ and so $a\im \alpha=0$. Thus
$a(\F_p[V/U]^{d-1})\le \ker \alpha$ which implies that  $(\ker
\alpha)^V$ has rank $d-1$.

Since the image of $H_2(U,\F_p)$ in $H_2(V,\F_p)$ is not trivial,
the rank of\\ $(\F_p[V/U]^{d-1}/\ker
\alpha)\widehat{\otimes}_{\F_p[[V]]}\F_p$ is at most $d-2$. Hence
$\ker \alpha$ is not in the Jacobson radical of $V$-module
$\F_p[V/U]^{d-1}$. Since any element outside the  Jacobson radical
of   $\F_p[V/U]^{d-1}$ generates a $V$-submodule isomorphic to $
\F_p[V/U]$, we conclude that the $\F_p[[V]]$-module $\ker \alpha$
contains a submodule $Z$ isomorphic to  $\F_p[V/U]$. Thus, since
$\dim Z^V=1$ we obtain that
$$\dim H_2(U,\F_p)=\dim \ker \alpha \ge \dim  Z+\dim (\ker \alpha)^V-\dim Z^V=|V/U|+d-2,$$ and so $$\ochi_1(U,\F_p)\ge
\dim H_2(U,\F_p)\ge \ochi_1(V,\F_p)-1+[V:U].$$ This proves Claim.

\

Now, let $K$ be a finitely generated subgroup of infinite index in
$G$. By way of contradiction assume $H^2(K,\F_p)\ne 0$. Then there
exists an open subgroup $V$ of $G$ containing $K$ such that for
any open subgroup $U$ of $V$ containing $K$ the image or
restriction map $H_2(K,\F_p)\to H_2(U,\F_p)$ is not trivial. Let
$U_0=V$, and let $U_{i+1}$ ($i=0,1,\ldots$) be  a subgroup of
index $p$ in $U_i$ containing $K$. Applying Claim, we obtain that
$\ochi_1(U_i,\F_p)\ge \ochi_1(U_0,\F_p)+(p-1)i$. Hence we can find
an open subgroup $U$ of $V$ containing $K$ such that
$\ochi_1(U,\F_p)-1=d(U)$ is arbitrary large and in particular,
there exists $K<U\le_o V$ such that $$d(U)-1+d(K)< \frac
{(d(U)-d(K))^2}4.$$ Let $N$ be a normal subgroup of $U$ generated
by $K$. Then $U/N$ does not satisfy the Golod-Shafarevich
inequality (see, for example, \cite[Interlude D]{DDMS}):
$$\dim H_2(U/N,\F_p)\le d(U)-1+d(K)< \frac {(d(U)-d(K))^2}4\le
\frac {(d(U/N))^2}4,$$ and so $N$ is of infinite index in $U$.
Since $N$ is not free pro-$p$, Corollary \ref{def1prop} and
Proposition \ref {subexp} imply that $U$ is not of subexponential
subgroup growth, a contradiction. \qed \end{proof}

%%%%%%%%%%%%%%%%%%%%%%%%%%%%%%%%%
\section{Poincar\'e duality groups of dimension 3}\label{PDdrei}
%%%%%%%%%%%%%%%%%%%%%%%%%%%%%%%%%%

Let now $G$ be a profinite group of type $p$-$FP_\infty$ and $B$ a
profinite $\Z_p[[G]]$-module. Then
$B=\lim\limits_{\displaystyle\longleftarrow} B_j$, where each
$B_j$ is a finite discrete $p$-torsion $\Z_p[[G]]$-module, and so
$H^i(G, B_j)$ is finite for all $i$ and $j$. Thus we can define
the $i$th  cohomology of $G$ with coefficients in the profinite
module $B$ as the profinite group
$$
H^i(G,B)=\lim\limits_{\displaystyle\longleftarrow} H^i(G,B_j).
$$
Note that this definition coincides with the one given in
\cite[Thm. 3.7.2]{SW} where it is
$\mathrm{Ext}^i_{\Z_p[[G]]}(\Z_p,B)$ (see also \cite[Corollary
2.3.5]{NSW}).

As defined in \cite{SW}, a profinite group $G$ of type $p$-$FP_{\infty}$  is called a
\textbf{Poincar\'e duality group} at $p$ of dimension $n$  if   $cd_p(G)=n$ and
$$
\begin{array}{ll}
H^i(G,\Z_p[[G]])= 0, & \mbox{ if } i\not= n, \vspace{0.2cm}\\
H^n(G,\Z_p[[G]])\cong \Z_p & \mbox {(as abelian groups)}.
\end{array}
$$
\begin{remark} \rm In \cite[page 165]{NSW} a more general definition of  a
 {Poincar\'e duality group} at $p$ is given (without the assumption of $G$ to be of type $p$-$FP_{\infty}$). If $G$ is of type $p$-$FP_{\infty}$, then both definitions coincide. In this paper we always assume that a  {Poincar\'e duality group} at $p$ is of type $p$-$FP_{\infty}$.
 \end{remark}
We use the term profinite $PD^n$-group at $p$ for a Poincar\'e
duality profinite group $G$ at $p$ of dimension $n$. If $G$ is a
profinite group with $cd_p(G)< \infty$ and $U$ is an open subgroup
of $G$, then $G$ is a profinite $PD^n$-group at $p$ if and only if
$U$ is a profinite $PD^n$-group at $p$.

By a result of Lazard, compact $p$-adic analytic groups $G$ are
virtual Poincar\'e duality groups of dimension $n=\dim(G)$ at a
prime $p$ (\cite[Thm. 5.9.1]{SW}). The Demushkin pro-$p$ groups
are exactly the pro-$p$ $PD^2$-groups (\cite[I.4.5 Example
2]{Serre}) and $\Z_p$ is the only pro-$p$ $PD^1$-group
(\cite[Example 4.4.4]{SW}).

Let  $G$  be $PD^n$ at $p$ and
$I_p(G)=Hom_{\Z_p}(H^n(G,\Z_p[[G]]),   \Q_p/\Z_p)$ its dualizing
module.  Then $I_p(G)$ is isomorphic (as abelian group) to
$\Q_p/\Z_p$ (note that the action of $G$ on $I_p(G)$ is not always
trivial). For any finite $\Z_p[[G]]$-module $M$,
$M^*=\Hom(M,I_p(G))$ is called the  {\bf  dual} of $M$. The action
of $G$ on $M$ is given as
$$(gf)(m)=gf(g^{-1}m),\ g\in G,\  m\in M, f\in M^*.$$
It is clear that $M^{**}\cong M$. We say that $M^*$ is {\bf
self-dual} if $M\cong M^*$.

In this section we are interested in
$PD^3$-groups at $p$.  The important consequences of the
Poincar\'e duality are recollected in the following proposition.

\begin{prop}\label{proppd3} Let $G$ be a   $PD^3$-group at
$p$. Then the following holds.
\begin{enumerate}
\item If  the trivial $G$-module $\F_p$ is self-dual then
$$\ochi_2(G,\F_p)=-\dim H^3(G,\F_p)=-1.$$
In particular, if $G$ is a pro-$p$ group, then $\df_p(G)=0$.
\item Let $M$  be a finite self-dual $\F_p[[G]]$-module, then $\dim H^3(G,M)=\dim M^G$ and
$$\ochi_2(G,M)= -\frac{\dim M^G}{\dim M}.$$
\item Let $N$ be a closed  subgroup of $G$ such that $[G:N]_p$ is infinite. Then $\cd_p(N)\le 2$.
 \end{enumerate}
 \end{prop}
\begin{proof} Let $M$  be a finite  $\F_p[[G]]$-module. By
\cite[Theorem 3.4.6]{NSW}, $\dim H^{n-i}(G,M)=\dim H^i(G, M^*)$.   This implies  the first and second  statements.

 In order to prove (3) we have to note that if $U$ is an open subgroup of $G$, then the correstriction map
 $H^3(U,\F_p)\to H^3(G,\F_p)$ is an isomorphism (see proof of Proposition 30, item (5), in \cite[I.\S 4.5]{Serre}). Hence the restriction map $H^3(G,\F_p)\to H^3(U,\F_p)$ is trivial if $p$ divides $[G:U]$. Therefore if $L$ is an open subgroup of $N$
 $$H^3(L,\F_p)={\displaystyle\lim_{\displaystyle\longrightarrow}}_{L<U\le_o G }H^3(U,
\F_p)=0.$$
because $[G:L]_p$ is infinite.  By Proposition \ref{projdim}, $\cd_p(N)\le 2$.
\end{proof}

In general it is not true that if  $G$ is a   $PD^3$-group at $p$,
then $\df_p(G)=0$, even in orientable case (i.e. $G$ acts
trivially on $I_p(G)$); the  example showing this was obtained in
communication with Peter Symonds.

\begin{example} Let $p\ge 5$ be a prime number and $a\in\Z_p^\vee$ such that $a^2\ne 1$ and $a^{p-1}=1$. Let $x$ be a generator of $C_{p-1}$. Consider $G=(\Z_p^3)\rtimes
C_{p-1}$,  where $x$ acts on $\Z_p^3$ as  multiplication by $a$ on
the first and second coordinates   and as multiplication by
$a^{-2}$   on the third.   So $G$ is an orientable
$PD^3$-profinite group at $p$, because $x$ acts on $I_p(G)$ as
multiplication by $(a\cdot a\cdot a^{-2})^{-1}=1$.

Set $H=\Z_p^3$ and let $M=\F_p$ be a $G$-module such that $H$ acts
trivially and $x$ acts as multiplication by $a^{-1}$.  By
Proposition \ref{consLHS}(4), $$H^\bullet(G,M) \cong
H^\bullet(H,M)^{G/H}.$$
 Moreover, $ H^\bullet(H,\F_p) \cong \bigwedge^\bullet~
H^1(H,\F_p)$. Thus $\dim H^1(H,M)=3$ with eigenvalues $a^{-2}$,
$a^{-2}$ and $a$ for the action of  $x$ and  $\dim H^2(H,M)=3$
with eigenvalues $1$, $1$ and $a^{-3}$ for the action of  $x$.
Therefore,
$$
\dim H^1(G,M) = \dim (H^1(H,M) ^{G/H}=0 $$ and $$ \dim H^2(G,M) =
\dim (H^2(H,  M)^{G/H}\ge 2.
$$
Thus,$$ \ochi_2(G,M)=\frac { - \dim  H^2(G,M) + \dim H^1(G,M) -
\dim H^0(G,M) } { \dim(M) } \le -2
$$ and so $\df_p(G)\leq 1+ \ochi_2(G,M) \le -1$.

%Note that $\df_p(G)=0$ is equivalent to the condition $$\dim H^1(G,M)=\dim H^1(G,M^\vee)$$ for all irreducible $\F_p[[G]]$-modules.
 \end{example}

The following theorem is a consequence of   \cite[Theorem 3.7.4]{NSW}.
\begin{theorem}\label{duality} Let $1\to N\to G\to G/N\to 1$ be an exact sequence of profinite groups such that

a) $G$, $N$ and $G/N$ are $p$-$FP_\infty$,

b) $\cd_p(G/N)<\infty$.

Then if two of three groups are $PD_{\bullet}$-groups at $p$, so
is the third. Moreover, $\cd_p(G)=\cd_p(G/N)+\cd_p(N)$.
\end{theorem}
The next result is an analog of Theorem \ref{def1}.

\begin{theorem}\label{def0}
Let $G$ be a finitely generated profinite $PD^3$-group  at $p$,
$K\le N$ two normal subgroup such that $|G/N|_p$ is infinite and
$M$ is a finite self-dual $\F_p[[G]]$-module on which $K$ acts
trivially. Suppose that
$$
\inf \left\{\ \frac{d_{p}(H,K)-1 }{[G:H]_p}\  \vrule \ N<H\le_0 G\
\right\}=0.
$$
Then, $\dim H^2(N,M)\le \dim M^N $.

Moreover,  either $\cd_p(N)\le 1$ or a $p$-Sylow subgroup of $G/N$
is virtually cyclic.
\end{theorem}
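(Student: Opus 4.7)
The plan is to argue by contradiction, mirroring the strategy of Theorem \ref{def1} but with the bound $\df_p(P,L)\ge 1$ replaced by the $PD^3$ information from Proposition \ref{proppd3}. Suppose $\dim H^2(N,M)\ge \sigma_M\dim M+1$. Choose a descending chain $G=H_1>H_2>\cdots$ of open normal subgroups of $G$ with $\bigcap_i H_i=N$, so that by continuity $H^2(N,M)=\varinjlim H^2(H_i,M)$. For some index $j$ the image of the restriction $\beta_j\colon H^2(H_j,M)\to H^2(N,M)$ then has $\F_p$-dimension at least $\sigma_M\dim M+1$. For any open normal subgroup $H$ of $G$ with $N\subseteq H\subseteq H_j$, let $P$ be the intermediate subgroup such that $P/H$ is a $p$-Sylow subgroup of $H_j/H$; then $P$ is open in $G$, hence again a $PD^3$-group at $p$.

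Iterating the weighted inequality $\dim M\cdot\ochi_2(P,M)\ge \dim M'\cdot\ochi_2(P,M')+\dim M''\cdot\ochi_2(P,M'')$ from the proof of Lemma \ref{short} along a composition series of $M$ as an $\F_p[[P]]$-module, Proposition \ref{proppd3} yields $\dim M\cdot\ochi_2(P,M)\ge -\tau$, where $\tau$ counts the composition factors isomorphic to the trivial module. Since $N\subseteq P$, every trivial $\F_p[[P]]$-composition factor descends to a trivial $N$-section of $M$, so $\tau=0$ when $\sigma_M=0$ (giving $\ochi_2(P,M)\ge 0$), while in general $\tau\le \dim M$ (giving $\ochi_2(P,M)\ge -1$). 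The restriction $\beta_j$ factors through $H^2(P,M)\xrightarrow{\beta_P}H^2(N,M)$, so $\dim\im\beta_P\ge \dim\im\beta_j\ge \sigma_M\dim M+1$. Applying Proposition \ref{restr} with $G_1=P$ and $G_2=H$ then yields
$$\ochi_1(H,M)\ge [P:H]\left(\ochi_2(P,M)+\frac{\dim\im\beta_P}{\dim M}\right)\ge \frac{[P:H]}{\dim M}$$
in both cases $\sigma_M\in\{0,1\}$ (the two terms combine to $1/\dim M$). Therefore $d_p(H,L)-1\ge [G:H]_p/([G:H_j]_p\dim M)$ for every such open normal $H$. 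Extending to an arbitrary open subgroup $H'\supseteq N$ by applying Lemma \ref{cof} to $H'$ and $H'\cap H_j$, exactly as in the last paragraph of the proof of Theorem \ref{def1}, produces a uniform positive lower bound on $(d_p(H',L)-1)/[G:H']_p$, contradicting the infimum hypothesis.

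For the moreover statement, suppose $\dim H^2(N,\F_p)=1$ but that $(G/N)_p$ is not virtually cyclic. I would adapt the first case analysis in the proof of Theorem \ref{tma}: non-cyclicity supplies an open subgroup $J$ of $G$ with $\ochi_1(J\cap N,\F_p)\ge 1$, and Proposition \ref{fgns} combined with the $PD^3$ identity $\ochi_2(U,\F_p)=-1$ forces
$$\ochi_1(U/(J\cap N),\F_p)\cdot\ochi_1(J\cap N,\F_p)+\dim H^2(U/N,\F_p)\le 1$$
for sufficiently small open $U\supseteq J\cap N$. Since non-cyclicity also allows $\ochi_1(U/(J\cap N),\F_p)$ to be made arbitrarily large, this produces the desired contradiction. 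The main obstacle is verifying that Proposition \ref{fgns} applies, namely that $\ochi_1(J\cap N,\F_p)$ is non-negative and finite; I expect this to follow from the bound $\dim H^2(N,\F_p)\le 1$ established in the first part combined with a careful Lyndon-Hochschild-Serre spectral sequence analysis exploiting the constraint $\dim H^3(H,\F_p)=1$ coming from the $PD^3$ structure.
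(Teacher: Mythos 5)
Your treatment of the bound $\dim H^2(N,M)\le \sigma_M\dim M$ is correct and is essentially the paper's own argument: the same chain $H_i$ with intersection $N$, the same Sylow-type subgroup $P$ with $P/H$ a $p$-Sylow subgroup of $H_j/H$, the estimate $\ochi_2(P,M)\ge-\sigma_M$ from Proposition \ref{proppd3} together with Lemma \ref{short}, and then Proposition \ref{restr} and Lemma \ref{cof} played off against the infimum hypothesis. That half needs no further comment.

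The ``moreover'' part is where the proposal has genuine gaps. First, a confusion of hypotheses: you assume that $(G/N)_p$ is not virtually cyclic and claim this ``supplies an open subgroup $J$ with $\ochi_1(J\cap N,\F_p)\ge 1$''; in Theorem \ref{tma} that step comes from non-cyclicity of the $p$-Sylow subgroup of $N$, not of $G/N$, so it does not follow from your assumption (it could be extracted from $\dim H^2(N,\F_p)=1$ plus the fact that a $PD^3$-group at $p$ has no $p$-torsion, but you do not argue this). Second, the contradiction you aim for does not materialize as stated: non-virtual cyclicity of $(G/N)_p$ yields only $\ochi_1\ge 1$ for suitable open subgroups of $G/N$, not ``arbitrarily large'' values --- every open subgroup of $\Z_p\times\Z_p$ has $\ochi_1=1$ --- so from $\ochi_2(U,\F_p)=-1$ and Proposition \ref{fgns} you only get $\ochi_1(U/(J\cap N),\F_p)\,\ochi_1(J\cap N,\F_p)+\dim H^2(U/(J\cap N),\F_p)\le 1$, which is consistent with both $\ochi_1$ factors being $1$ unless you additionally control the $H^2$ term; that would require a case analysis of the Sylow subgroup of $G/N$ which you do not supply. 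Third, and most seriously, Proposition \ref{fgns} requires $\ochi_1(J\cap N,\F_p)$ to be \emph{finite}, i.e.\ $\dim H^1(J\cap N,\F_p)<\infty$; the theorem does not assume $N$ finitely generated, you concede you cannot verify this, and in the paper's own application (the corollary following the theorem, where $N$ is an arbitrary normal subgroup of a pro-$p$ $PD^3$-group) finite generation of $N$ is \emph{deduced from} the ``moreover'' conclusion, so assuming it here would be circular. The paper avoids all of this by a different, module-theoretic route: after passing to an open subgroup so that $H_2(N,\F_p)$ embeds in $H_2(G,\F_p)$, it applies $\F_p[[G/N]]\widehat{\otimes}_{\Z_p[[G]]}-$ to the partial resolution built from the relation module $\bar R_p$, shows via Proposition \ref{proppd3} and formula (\ref{numgen}) that the quotient of $\F_p[[G/N]]\widehat{\otimes}_{\Z_p[[G]]}\bar R_p$ by the one-dimensional trivial submodule $H_2(N,\F_p)$ is free of rank $d(G)-1$, deduces $\cd_p(G/N)=1$, and only then obtains cyclicity of the $p$-Sylow of $G/N$ from Corollary \ref{consLHS}(5) together with $\dim H^3=1$; none of this needs any finiteness of $H^1(N,\F_p)$.
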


\begin{proof}
 First, let us show that $\dim H^2(N,M)\le   \dim M^N$.  By way of contradiction let us assume that $\dim H^2(N,M)>
\dim M^N$. Let $G=H_1>H_2>\ldots$ be a chain of open normal
subgroups such that $\cap_i H_i=N$. Thus we have that
$$N=\displaystyle \underleftarrow{\lim}\  H_i.$$
Hence $H^2(N,M)=\underrightarrow{\lim}\ H^2(H_i,M)$. Since $\dim
H^2(N,M)>\dim M^N$, we obtain that there exists $j$ such that if
$\beta$ denotes  the restriction map $ H^2(H_j,M)\to H^2(N,M)$,
then $\dim \im \beta\ge \dim M^N +1$.

Let $N\le H$ be a open normal subgroup of $G$ contained in $H_j$
and $P$ a subgroup of $H_j$ containing $H$ such that $P/H$ is a
$p$-Sylow subgroup of $H_j/H$. Since $P$ is also a $PD^3$-group at
$p$ and $M$ is self-dual,
 Proposition \ref{proppd3}  gives that
$$\ochi_2(P,M)\ge  -\frac{\dim M^N}{\dim M}.$$ Therefore by Proposition \ref{restr},
$$
\ochi_1(H,M)\ge    ( \ochi_2(P,M)+\frac{\dim(\im
\beta)}{\dim(M)})[P:H]\ge \frac{[P:H]}{
\dim(M)}=\frac{[G:H]_p}{[G:H_j]_p\dim(M)}.
$$

 Hence $d_{p}(H,K)\ge 1+ \frac{[G:H]_p}{[G:H_j]_p\dim(M)}$. Now, if $H$ is an
arbitrary open subgroup containing $N$, applying Lemma \ref{cof}
for $H$ and $H \cap H_j$ and the latter inequality for $H\cap
H_j$, we get
$$\frac{d_{p}(H,K)-1}{[G:H]_p} \ge \frac{d_{p}((H\cap H_j),K )-1}{[H:(H\cap H_j )][G:H]_p}\ge
 \frac{1}{[G:H_j]_p[H:(H\cap H_j )]_{p^\prime}\dim(M)}.$$
 \medskip
  Since $H_j$ is fixed, $\frac{1}{ [H:(H\cap H_j )]_{p^\prime}}$ has positive lower bound that gives a contradiction   with the hypothesis.  Hence, $\dim H^2(N,M)\le \dim M^N$.

Note that by Proposition \ref{proppd3}, $\cd_p(N)\le 2$. Let us
now analyze the case $\cd_p(N)=2$. In this case there exists an
open subgroup $L$ of $N$ such that $H^2(L,\F_p)\ne 0$. Since
$L=U\cap N$ for some open subgroup $U$ of $G$, we conclude that
$H^2(L,\F_p)=\F_p$. Now, Theorem \ref{comptes} implies that $U/L$
and so $G/N$ are of virtual cohomological $p$-dimension 1.
Without loss of generality let us assume that $\cd_p(U/L)=1$.

 If a $p$-Sylow subgroup of $U/L$ is not  cyclic then, by Lemma
\ref{propSylow}, there exists an open subgroup $V$ of $U$
containing $L$ such that $\ochi_1(V/L,\F_p)\ge 1$ and $V$ acts
trivially on $H^2(L,\F_p)$. Thus, by Corollary \ref{consLHS}(4),
$$
1=\dim  H^3(V,\F_p)=\dim H^1(V/L,H^2(L,\F_p))>1,
$$
a contradiction. Hence, the Sylow pro-$p$ subgroup of $U/L$ is
cyclic. \qed \end{proof}

Now we need the following.

\begin{lemma}\label{pd1} Let $G$ be a profinite group with infinite cyclic  Sylow pro-$p$ subgroup. Then there  is an open subgroup $U$ isomorphic to a  semidirect product of a profinite pro-$p'$ group and $\Z_p$.
In particular, $U$ and therefore $G$ are profinite $PD^1$-groups
at $p$.
\end{lemma}
\begin{proof}  We can find an open subgroup $U$ such that $H^1(U,\F_p)= \F_p$. Then Lemma \ref{propquot} implies that $U_{[p]}\cong \Z_p$. Therefore, the kernel $N$ of the natural map $U\to U_{[p]}$ is a pro-$p'$ group.

%If $M$ is an irreducible   $\Z_p[[G]]$-module, then by \cite[Proposition 4.5.1]{SW}, $$\dim M \ochi_1(M)=\dim H^1(G,M)-\dim H^0(G,M)= \dim H^0(G,M^*)-\dim H^0(G,M)=0.$$
\end{proof}
Now we are ready to prove Theorem \ref{thpd3}.
\begin{theorem}
Let $G$ be a profinite $PD^3$-group at a prime $p$ and $N$ be a
finitely generated normal  subgroup of $G$ such that $|G/N|_p$ is
infinite and $p$ divides $|N|$. Then either $N$ is $PD^1$ at $p$
and $G/N$ is virtually $PD^2$ at $p$ or $N$ is $PD^2$ at $p$ and
$G/N$ is virtually  $PD^1$ at $p$.
\end{theorem}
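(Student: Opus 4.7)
The argument will proceed in three stages, leaning heavily on Theorem \ref{def0}, Theorem \ref{comptes}, Proposition \ref{proppd3} and Lemma \ref{commuting}.

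First I would show $\cd_p(N)\in\{1,2\}$. The upper bound comes from proving $H^3(N,\F_p)=0$ by the same device used near the end of the proof of Theorem \ref{def0}: for open $N\le V\le U\le_o G$, the corestriction $H^3(U,\F_p)\to H^3(G,\F_p)$ is an isomorphism between one-dimensional spaces, so the identity $\mathrm{cor}\circ\mathrm{res}=[U:V]\cdot\mathrm{id}$ forces the restriction $H^3(U,\F_p)\to H^3(V,\F_p)$ to vanish whenever $p\mid[U:V]$. Since $|G/N|_p=\infty$, one may choose a cofinal chain $N\le\cdots\le U_2\le U_1\le_o G$ with $p\mid[U_i:U_{i+1}]$, whence $H^3(N,\F_p)=\varinjlim H^3(U_i,\F_p)=0$ and Proposition \ref{projdim} gives $\cd_p(N)\le 2$; the lower bound $\cd_p(N)\ge 1$ is immediate from $p\mid|N|$.

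Next I apply Theorem \ref{def0} with $L=N$ and $M=\F_p$ to obtain $\dim H^2(N,\F_p)\le 1$; the required infimum hypothesis is verified by noting that every open $H\supseteq N$ is itself $PD^3$ at $p$, so $\ochi_2(H,\F_p)=-1$ by Proposition \ref{proppd3}(1) and $d_p(H,N)-1$ is controlled by a quantity depending only on $[G:H]_{p'}$ (via Poincar\'e duality $\dim H^1(H,M)=\dim H^2(H,M)$ and the Schreier bound), so the infimum vanishes along a tower of $p$-power-index subgroups. In the case $\cd_p(N)=1$, $N$ is projective at $p$ and, since $H^1(N,\F_p)$ is finite nonzero, Theorem \ref{comptes} gives $\vcd_p(G/N)=2$. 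In the case $\cd_p(N)=2$ we have $\dim H^2(N,\F_p)=1$; Theorem \ref{def0} additionally asserts that the $p$-Sylow of $G/N$ is virtually cyclic, while Theorem \ref{comptes} gives $\vcd_p(G/N)=1$, so $G/N$ is already seen to be virtually $PD^1$ at $p$. In both cases, Theorem \ref{critfp} combined with the bounds from Theorem \ref{def0} shows that $N$ is of type $p$-$FP_\infty$.

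To extract the Poincar\'e duality structures on $N$ and (in Case 1) on $G/N$, choose an open subgroup $V\le G$ containing $N$ such that $V/N$ carries the expected $PD^{3-\cd_p(N)}$ structure (immediate in Case 2, to be bootstrapped in Case 1). Then consider the Lyndon--Hochschild--Serre spectral sequence
\[
E_2^{r,s}=H^r(V/N,H^s(N,\Z_p[[V]]))\Rightarrow H^{r+s}(V,\Z_p[[V]]).
\]
Since $\Z_p[[V]]$ is a free $\Z_p[[N]]$-module over the profinite space $V/N$, Lemma \ref{commuting} identifies $H^s(N,\Z_p[[V]])\cong H^s(N,\Z_p[[N]])\widehat{\otimes}_{\Z_p}\Z_p[[V/N]]$. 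Together with the concentration of $H^{r+s}(V,\Z_p[[V]])$ at $r+s=3$ (equal to $\Z_p$) and the $PD$-structure of $V/N$, the spectral sequence is forced to concentrate at the single position $(r,s)=(3-\cd_p(N),\cd_p(N))$, yielding $H^{\cd_p(N)}(N,\Z_p[[N]])\cong\Z_p$. This establishes that $N$ is $PD^{\cd_p(N)}$ at $p$; a parallel application (reversing the roles) in Case 1 pins down the $PD^2$-structure on a finite-index subgroup of $G/N$.

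\textbf{Main obstacle.} The principal difficulty is this final spectral-sequence computation: in Case 1 one must bootstrap the $PD^2$ structure on a finite-index subgroup of $G/N$ without already knowing it, and one must show that the relevant $E_2$-terms $H^r(V/N,W\widehat{\otimes}\Z_p[[V/N]])$ concentrate in the correct degree with the expected value and that no nontrivial differentials survive. A secondary technical point is the verification of the infimum hypothesis in Theorem \ref{def0}: although intuitively clear from the $PD^3$ structure, the uniform bound on $d_p(H,N)$ requires careful control of Poincar\'e duality dimensions on open subgroups.
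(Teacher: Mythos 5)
Your overall skeleton (control $H^2(N,\F_p)$ via Theorem \ref{def0}, split by $\cd_p(N)$, get $\vcd_p(G/N)$ from Theorem \ref{comptes}, get $p$-$FP_\infty$ from Theorem \ref{critfp}, then extract duality from $\Z_p[[G]]$-coefficients via Lemma \ref{commuting}) is the right one, but two steps are genuinely broken. The first is your verification of the infimum hypothesis of Theorem \ref{def0}. You claim that because every open $H\supseteq N$ is $PD^3$ at $p$, Poincar\'e duality bounds $d_{p}(H,N)-1$ by a quantity depending only on $[G:H]_{p'}$. Duality gives only the Euler-characteristic identity $\ochi_2(H,\F_p)=-1$ (Proposition \ref{proppd3}(1)); it gives no upper bound on $\dim H^1(H,M)$, which can a priori grow like $[G:H]_p$ --- indeed, whether $\dim H^1(\cdot,\F_p)/[G:\cdot]$ tends to $0$ along $p$-towers in such groups is precisely the open Question \ref{pgradient} discussed in the paper for arithmetic Kleinian groups, and Theorem \ref{congr} exists exactly because this is \emph{not} automatic. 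The paper checks the hypothesis by a different route and in a different order: when the $p$-Sylow of $N$ is not cyclic it first proves, via Proposition \ref{fgns} and the constraint $\ochi_2(U,\F_p)=-1$, that the $p$-Sylow of $G/N$ is virtually cyclic; only then does the five-term sequence $\dim H^1(U,M)\le \dim H^1(U/N,M^N)+\dim H^1(N,M)^{U/N}$ together with finite generation of $N$ give a bound on $d_p(U,N)$ that is uniform in $U$, so that $[G:U]_p\to\infty$ makes the infimum vanish. Your plan applies Theorem \ref{def0} before knowing anything about $G/N$, and as stated the hypothesis cannot be verified.

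The second gap is the one you yourself flag as the main obstacle, and it is real: in Case 1 your spectral-sequence argument is circular (you need a $PD^2$ structure on $V/N$ to force concentration, and concentration to produce that structure), and you never show the off-diagonal $E_2$-terms vanish. The paper avoids any bootstrapping: since one of $\cd_p(N)$, $\cd_p(G/N)$ equals $1$, the Lyndon--Hochschild--Serre sequence already collapses for finite coefficients, giving $H^3(G,A)\cong H^1(G/N,H^2(N,A))$ (Corollary \ref{consLHS}(5)) or $H^3(G,A)\cong H^2(G/N,H^1(N,A))$ (Corollary \ref{consLHS}(4)); after establishing that $N$ and $G/N$ are of type $p$-$FP_\infty$ (Theorem \ref{critfp}, Corollary \ref{pf2}) one passes to $A=\Z_p[[G]]$ by inverse limits and uses the K\"unneth-type identity (\ref{bieri}), $H^j(G/N,H^i(N,\Z_p[[G]]))\cong H^j(G/N,\Z_p[[G/N]])\widehat{\otimes}_{\Z_p}H^i(N,\Z_p[[N]])$, so that $\Z_p$ is exhibited as a completed tensor product whose two factors are then forced to be $\Z_p$ simultaneously --- no prior knowledge of either duality structure is needed. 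The upgrade from $H^{\cd}(\cdot,\Z_p[[\cdot]])\cong\Z_p$ to the full $PD$ statements is then taken from \cite[Theorem 3]{KP} and \cite[Theorem 3.7.4]{NSW}, not from a direct concentration argument; without these (or a substitute) your final stage does not close.
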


\begin{proof}
During this proof when we write $PD^n$ we shall mean $PD^n$ at
$p$.

\

{\it Claim 1.} Assume that the $p$-Sylow subgroup of $N$ is not
cyclic. Then

\begin{enumerate}

\item[(i)] The Sylow pro-$p$ subgroup of $G/N$ is virtually
cyclic; \item[(ii)] $N$ is of type $p$-$FP_\infty$; \item[(iii)]
$G/N$ is virtually $PD^1$ at $p$ and   $N$ is $PD^2$ at $p$.
\end{enumerate}

\

(i) By Lemma \ref{propSylow} we can find an open subgroup $J$ of
$G$ such that $\ochi_1(J\cap N,\F_p)\geq 1$. Applying Proposition
\ref{fgns} and repeating the same argument as in (\ref{t-U/NJ}),
we obtain that there exists an open subgroup $V$ of $J$ containing
$J\cap N$ such that for any open subgroup $U$ of
 $J$ containing $J\cap N$, $\ochi_1(U/N\cap J,\F_p)\le 0$.
 This means that the Sylow pro-$p$ subgroup of $G/N$ is virtually
 cyclic.

 \

 For simplicity let us assume that the Sylow pro-$p$
 subgroup of $G/N$ is cyclic, and so $\cd_p(G/N)=1$.

\medskip

 (ii)  Note that if $U$ is an open subgroup of $G$ containing $N$ and $M$
a finite $\Z_p[[U]]$-module, then by Corollary \ref{consLHS}(1),
$$\dim H^1(U,M) \le \dim H^1(U/N, M^N)+\dim H^1(N,M)^{U/N}.$$ Since by Lemma \ref{pd1}, $U/N$ is $PD^1$ at $p$ one has
$$\dim H^1(U/N, M^N)=\dim H^0(U/N, (M^N)^\vee)=\dim ((M^N)^\vee)^{U/N}$$ and using $$ \dim
((M^N)^\vee)^{U/N}+\dim H^1(N,M)^{U/N}\leq \dim M + \dim
H^1(N,M)$$ one has $$\dim H^1(U,M)\leq \dim M + \dim H^1(N,M).$$
Thus, $d_p(U)\le 1+d_p(N)$ and so we may apply Theorem \ref{def0}
for $K=1$.

 Let $S$ be an irreducible finite $\F_p[[N]]$-module.  There exists an open subgroup $V$ of $G$ such that $V\cap N$ acts trivially on $S$. We convert $S$ in a $VN$-module by assuming that the elements of $V$ act trivially on $S$. Since $VN$ is open subgroup of  $G$, it is also $PD^3$ at $p$. By  Theorem \ref{def0}, $\dim H^2(N,  S\oplus  S^\vee)\le 2\dim S$. Hence $ \dim H^2(N,S)\le 2\dim S$.

 Therefore since, by Proposition \ref{proppd3}  ,  $N$ has cohomological
$p$-dimension 2, it follows from Theorem \ref{critfp} that  $N$ is
of type $p$-$FP_\infty$.

 \medskip

 (iii) By Lemma \ref{pd1}, $G/N$ is $PD^1$ and we can apply Theorem
\ref{duality} and conclude that $N$ is $PD^2$.

\

 \

 {\it Claim 2.} Suppose that the Sylow pro-$p$ subgroup of $N$ is  cyclic.  Then $N$ is $PD^1$ at $p$ and $G/N$ is virtually $PD^2$ at $p$

 \

   Applying Corollary \ref {corwz}, we obtain that $\vcd_p(G/N)=2$. Let $U$ be an open subgroup of $G$ containing
   $N$ such that $\cd_p(U/N)=2$. Since $N$ is finitely generated,
$U/N $ is of type $p$-$FP_2$ by Corollary \ref{pf2} and so it is
also of type $p$-$FP_\infty$.
  By Lemma \ref{pd1}, $N$ is $PD^1$. Now, Theorem \ref{duality} implies that $U/N$ is $PD^2$ and so $G/N$  is virtually $PD^2$.
\end{proof}

\begin{corollary} Let $G$ be a finitely generated pro-$p$ $PD^3$-group  at
$p$ and $ N$ a normal subgroup of infinite index. Suppose that $G$ has subexponential subgroup growth.
 Then $N$ is either free pro-$p$ or a  Demushkin group.
\end{corollary}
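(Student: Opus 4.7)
The plan is to apply Theorem \ref{def0} with $L = N$ and $M = \F_p$. In a pro-$p$ group $H$, the only simple $\F_p[[H]]$-module in $\mathcal{M}_p(N)$ is the trivial module $\F_p$, so Lemma \ref{short} gives $d_p(H,N) - 1 = \ochi_1(H, \F_p) = d(H) - 1$. Thus the growth hypothesis of the corollary coincides exactly with the hypothesis of Theorem \ref{def0}, which yields $\dim H^2(N, \F_p) \le 1$. If $H^2(N, \F_p) = 0$, then $\cd_p(N) \le 1$, and the pro-$p$ group $N$ is free pro-$p$.

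Suppose therefore $\dim H^2(N, \F_p) = 1$. By the ``moreover'' clause of Theorem \ref{def0}, the $p$-Sylow subgroup of $G/N$ is virtually cyclic; since $G/N$ is pro-$p$ of infinite order, it is virtually $\Z_p$. Replace $G$ by an open subgroup $U \supseteq N$ with $U/N \cong \Z_p$; the growth hypothesis transfers to $U$, which remains a pro-$p$ $PD^3$-group. The aim is now to deduce $d(N) < \infty$, after which the preceding theorem applied to $(U,N)$ yields the conclusion: since $U/N \cong \Z_p$ is $PD^1$ at $p$ but not virtually $PD^2$, the dichotomy there forces $N$ to be a pro-$p$ $PD^2$-group, that is, a Demushkin group.

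To prove $d(N) < \infty$, set $V = H^1(N, \F_p)$ and regard it as a discrete $\F_p[[U/N]]$-module. Choose a topological generator $\sigma$ of $U/N \cong \Z_p$ and write $T = \sigma - 1$. Since $\cd_p(U/N) = 1$, Corollary \ref{consLHS}(5) applied with $i = 0$ and $i = 1$, combined with Poincar\'e duality $\dim H^2(U, \F_p) = d(U)$ and the triviality of the $U/N$-action on $H^2(N, \F_p) \cong \F_p$, gives
$$
\dim V^{\sigma} \;=\; d(U) - 1 \;=\; \dim V/TV.
$$
A discrete (hence torsion) $\F_p[[T]]$-module splits as a reduced part plus a divisible part; each Pr\"ufer (divisible) summand contributes $1$ to $\dim V^{\sigma}$ but $0$ to $\dim V/TV$, so the displayed equality forces the divisible part to vanish. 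The finite-dimensional socle $V^{\sigma}$ then embeds $V$ into a finite direct sum of Pr\"ufer envelopes, which is Artinian, and a reduced Artinian torsion $\F_p[[T]]$-module is a finite direct sum of cyclic modules $\F_p[[T]]/(T^{n_i})$, each of finite dimension. Hence $\dim_{\F_p} V < \infty$ and $N$ is finitely generated.

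The principal obstacle is this finite generation step, which combines the collapsed Lyndon--Hochschild--Serre spectral sequence with the structure theory of torsion $\F_p[[T]]$-modules; the decisive input is the coincidence $\dim V^{\sigma} = \dim V/TV$, which ultimately encodes Poincar\'e duality on $U$. Once $N$ is known to be finitely generated, the preceding theorem completes the proof.
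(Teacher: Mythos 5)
Your proof is correct, and it follows the paper's skeleton at both ends: you invoke Theorem \ref{def0} with $L=N$, $M=\F_p$ (after noting that $d_p(H,N)=d(H)$ for pro-$p$ $H$, via Lemma \ref{short}) to get $\dim H^2(N,\F_p)\le 1$, the case $H^2(N,\F_p)=0$ gives that $N$ is free pro-$p$, and the Demushkin conclusion comes from the preceding $PD^3$ splitting theorem once $N$ is known to be finitely generated. Where you genuinely diverge from the paper is the finite-generation step. The paper uses the growth hypothesis a second time: since $G/N$ is virtually cyclic, an infinite $N/\Phi(N)$ would force $G/\Phi(N)$ to have exponential subgroup growth (the $C_p\wr\Z_p$ argument from the subexponential-growth section), contradicting $\inf (d(H)-1)/[G:H]=0$; hence $N/\Phi(N)$ is finite. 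You instead never reuse the hypothesis (your remark that it ``transfers to $U$'' is in fact not needed): passing to $U$ with $U/N\cong\Z_p$, you combine Corollary \ref{consLHS}(5) for $i=0,1$ with the Euler-characteristic identity $\dim H^2(U,\F_p)=d(U)$ from Proposition \ref{proppd3}(1), the triviality of the pro-$p$ action on the one-dimensional $H^2(N,\F_p)$, and the identification $H^1(\Z_p,V)\cong V/TV$ for the discrete $T$-primary torsion $\F_p[[T]]$-module $V=H^1(N,\F_p)$, obtaining $\dim V[T]=\dim V/TV=d(U)-1$; finiteness of the socle then makes $V$ Artinian, hence of the form $E^a\oplus F$ with $F$ of finite length, and the equality of socle and cosocle dimensions kills the divisible part, so $V$ is finite and $N$ is finitely generated. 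This buys a slightly stronger intermediate fact — finite generation of $N$ is forced by duality alone once $\dim H^2(N,\F_p)=1$ and $G/N$ is virtually $\Z_p$ — at the cost of invoking Matlis-type structure theory over $\F_p[[T]]$, whereas the paper's route is shorter given its subgroup-growth machinery. One presentational point: as written you conclude that ``the divisible part vanishes'' from $\dim V^{\sigma}=\dim V/TV$ before establishing that the reduced part has finite length (equivalently, equal socle and cosocle dimensions); the correct order is finite socle $\Rightarrow$ Artinian $\Rightarrow$ $V\cong E^a\oplus F$ with $F$ finite, and only then does the equality force $a=0$. All the needed facts appear in your text, so this is a reordering, not a gap.
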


\begin{proof} Proposition \ref{subexp} implies that $$\inf \left\{\ \frac{d(H)-1 }{[G:H]}\  \vrule \ N<H\le_0 G\ \right\}=0.$$ Therefore, by Theorem \ref{def0}, $\dim H^2(N,\F_p)\le 1$. If $H^2(N,\F_p)=0$ then $N$ is free pro-$p$. If
$H^2(N,\F_p)=\F_p$ then Theorem \ref{def0} implies that $G/N$ is
virtually cyclic. Hence since $G/\Phi(N)$ is not of exponential subgroup growth,
$N/\Phi(N)$ should be  finite. Thus, $N$ is finitely generated. By the
previous theorem, $N$ is a Demushkin group.
\end{proof}

%%%%%%%%%%%%%%%%%%%%%%%%%%%%%%%%%%%%%%%%%%%%%%%%%%%%%%%%%%%
\section{Applications for discrete groups}
%%%%%%%%%%%%%%%%%%%%%%%%%%%%%%%%%%%%%%%%%%%%%%%%%%%%%%%%%%%%

In this section we shall describe applications of our profinite
results to discrete finitely generated groups.

%%%%%%%%%%%%%%%%%%%%%%%%%%%%%%%%%
\subsection{Good groups}\label{Good groups}
%%%%%%%%%%%%%%%%%%%%%%%%%%%%%%%%%%

Let $\Gamma$ be a group,
$\widehat \Gamma$ its profinite completion. The group $\Gamma$ is
called {\bf $p$-good} if the homomorphism of cohomology groups
$$i^n(M): H^n(\widehat \Gamma,M)\longrightarrow H^n( \Gamma,M)$$
induced by the natural homomorphism $i:\Gamma\longrightarrow
\widehat \Gamma$    is an isomorphism for every finite $p$-primary
$\Z[\Gamma]$-module $M$ and for all $n\ge 0$. The group $\Gamma$ is
called {\bf good} if it is $p$-good for all primes $p$. This notion was introduced by Serre (see \cite[I.2.6]{Serre}) and has been studied recently in several papers (see, for example, \cite{GJZ}).

Let $\Gamma_{\hat p}$ be the pro-$p$ completion of $\Gamma$, and let $i_p:\G\to \G_{\hat p}$ denote the canonical map. This map induces natural homomorphisms
$$
i_p^n(M): H^n(\Gamma_{\hat p},M)\longrightarrow H^n( \Gamma,M)
$$ for all $n\ge 0$ and for all finite $\Z[\Gamma]$-modules $M$ of $p$-power order for which all composition factors are trivial $\Gamma$-modules.
The group $\Gamma$ is called {\bf pro-$p$ good}  if $i_p(\F_p)$ is an isomorphism for all $n$.
Note that this also implies that $i_p^n(M)$ is an isomorphism for all $n\ge 0$ and for all finite $\Z[\Gamma]$-modules $M$ of $p$-power order for which all composition factors are trivial $\Gamma$-modules.

The following relation between $p$-goodness and pro-$p$ goodness
was discovered by  Thomas  Weigel \cite{We}:
\begin{prop}\label{p-good-pro} Let $\Gamma$ be a group. Assume that all the subgroups of $\Gamma$ of finite index are pro-$p$ good. Then $\Gamma$ is $p$-good.
\end{prop}
The following result is a variation of a result of  Serre from \cite[I.2.6]{Serre}.
\begin{prop}\label{prserre}
Let $\Gamma$ be a discrete group, $G$ a profinite group and $\phi:
\Gamma\to G$ a homomorphism with dense image. For any finite
(topological) $G$-module $M$, denote by $\phi^{n}(M)$ the
restriction  map $\phi^{n}(M): H^n(G,M)\to H^n(\G, M)$. Then the
following properties are equivalent:
\begin{itemize}
\item[$A_k$]  For every finite $\F_p[[G]]$-module $M$, $\phi^n(M)$ is bijective for all $n\le k$ and injective for $n=k+1$.
\item [$B_k$] For every finite $\F_p[[G]]$-module $M$, $\phi^n(M)$ is surjective for all $n\le k$.
\item[$D_k$] ${\displaystyle\lim_{\displaystyle\longrightarrow}} _{U\le_o G} H^n(\Gamma \cap U, \F_p)=0$
 for all $n\le k$.

\end{itemize}

 \end{prop}

As an application of some results of Section \ref{Normal subgroups of profinite groups of positive deficiency} we obtain the following theorem.
\begin{theorem}\label{gooddef1} Let $\Gamma$ be finitely presented group of deficiency 1 and cohomological dimension 2. Assume that $\ochi_2(U,\F_p)\le 0$ for any open subgroup $U$ of $\Gamma_{\hat p}$. Then $\Gamma$ is pro-$p$ good.
\end{theorem}

\begin{proof}
 First observe that the condition $B_1$ from Proposition \ref{prserre} always holds in the case $G=\G_{\hat p}$.
  %(it is immediate when $M=\F_p$ and the general case follows from the induction on  $\dim M$).
   Hence  $A_1$ and $D_1$ holds. In particular,  for any $U\le _o \Gamma_{\hat p}$,
$$\begin{array}{lll}\dim H^1(U,\F_p)& = &\dim H^1(\G_{\hat p}, Coind_U^{\G_{\hat p}}(\F_p))\\ & &\\ &=& \dim H^1(\G, Coind_{U\cap \G}^{\G}(\F_p))= \dim H^1(\Gamma\cap U,\F_p)\\ &&\\\dim H^2(U,\F_p)& = &\dim H^2(\G_{\hat p}, Coind_U^{\G_{\hat p}}(\F_p))\\ & &\\ &\le& \dim H^2(\G, Coind_{U\cap \G}^{\G}(\F_p))= \dim H^2(\Gamma\cap U,\F_p).\end{array}$$
Thus, since any subgroup of $\Gamma$ of finite index has positive deficiency, we obtain that
$$\begin{array}{lll}
1 & \le & \df (\Gamma\cap U)\le \dim H^1(\Gamma\cap U,\F_p)-\dim H^2(\Gamma\cap U,\F_p)\\ &&\\ &\le& \dim H^1( U,\F_p)-\dim H^2(  U,\F_p)= \ochi_2(U,\F_p)+1\le 1.\end{array}$$
Therefore $  \dim H^2(\Gamma\cap U,\F_p)=  \dim H^2(  U,\F_p)$ and so
$${\displaystyle\lim_{\displaystyle\longrightarrow}} _{U\le_o G} H^2(\Gamma \cap U, \F_p)={\displaystyle\lim_{\displaystyle\longrightarrow}} _{U\le_o G} H^2( U, \F_p)=H^2(\{1\},\F_p)=0.$$
Hence the condition $D_2$ from Proposition \ref{prserre} holds. Thus, by the condition $A_2$,  $i_p^k(\F_p)$ is an isomorphism for $k\le 2$ and $H^3(\G_{\hat p},\F_p)\le H^3(\Gamma,\F_p)=0$.  Hence $\cd_p(\G_{\hat p})=2$ by Proposition \ref{projdim}.
 We conclude that $\Gamma$ is pro-$p$ good.
\end{proof}

\begin{corollary}\label{semi}
Let $\Gamma$ be finitely presented group of deficiency 1 and
cohomological dimension 2. Assume that $\widehat \Gamma$ is a
semidirect product of  a finitely generated  profinite group $P$
and $\hat \Z$. Then $\Gamma$ is good and pro-$p$ good for any
prime $p$.
\end{corollary}

\begin{proof}
Let $H$ be a subgroup of $\Gamma$ of finite index and $p$ a prime.
Note that $\cd(H)=2$ and its deficiency is positive. Since
$\widehat H$  is a subgroup of $\widehat \G$, it is also a
semidirect product of a finitely generated profinite group and
$\hat \Z$. Hence $H_{\hat p}\cong \widehat H_{[p]}$ is a
semidirect product of a  finitely generated pro-$p$ group and
$\Z_p$. Thus, by Corollary \ref{def2ns}, if $U$ is an open
subgroup of $H_{\hat p}$, then  $$\ochi_2(U,\F_p)=\df_p(U)-1\le
0.$$ Applying, Theorem \ref{gooddef1}, we obtain that $H$ is
pro-$p$ good. Proposition  \ref{p-good-pro} implies that $\Gamma$
is good.
\end{proof}

Let $\Gamma $ be a finitely presented group. A {\bf chain} in $\Gamma $ is
a decreasing infinite sequence $\Gamma =\Gamma _{0}>\Gamma _{1}>\ldots $ of
subgroups of finite index in $\Gamma $. The chain is {\bf normal} if all $
\Gamma _{n}$ are normal in $\Gamma $. From a result of Luck (see, for example, \cite{Luek}) we know that for any normal chain $\{\G_i\}$ with trivial intersection there exists
$$\lim_{i\to \infty} \frac{\dim H^1(\Gamma_i,\Q)}{[\Gamma:\Gamma_i]}.$$
Moreover this limit does not depend on $\{\G_i\}$ and if $\Gamma$ is infinite it is equal to the firtst $L^2$- Betti number $\beta_1^{(2)}(\Gamma)$.  We need the  following auxiliary proposition. We say that a group $\Gamma$ is {\bf residually-$p$} if $G$ is residually (a finite $p$-group) group

\begin{prop} Let $\Gamma$ be residually-$p$  group. Then

a)  $\beta_1 (\Gamma)=\dim_{\Q} H^1(\Gamma, \Q)\ge \df_p(\G_{\hat p})$,

b) $\beta^{(2)}_1(\Gamma)\ge \ochi_2(\G_{\hat p},\F_p)$.
\end{prop}

\begin{proof}
In order to prove a), note that $ \df_p(\G_{\hat p})$ is the
difference between the minimal number of generators and the
minimal number of relations of $\G_{\hat p}$ as a  pro-$p$ group.
Hence arguing as in \cite[Window 5, Sec.1, Lemma 3]{LS}, we obtain
that $$\dim_{\Q_p} \Q_p\otimes_{\Z_p}
 \G_{\hat p}/[\G_{\hat p}, \G_{\hat p}]
 \ge \df_p(\G_{\hat p}).$$
Since $\dim_{\Q} H^1(\Gamma, \Q)=\dim_{\Q_p}   \Q_p\otimes_{\Z_p}
 \G_{\hat p}/[\G_{\hat p}, \G_{\hat p}]$ we obtain a).

Let $U$ be  open subgroup of $\G_{\hat p}$. By Lemma \ref{cof} and
a),
$$
\frac{\dim_\Q H^1(U\cap \Gamma,\Q)-1}{[\G:(U\cap \G)]} \ge \frac{
\df_p(U)-1}{[\G_{\hat p}:U]}\ge \df_p(\Gamma_{\hat
p})-1=\ochi_2(\Gamma_{\hat p},\F_p).
$$
Since $\Gamma$ is residually-$p$ group, then if $\{U_i\}$ is a
normal chain of $\G_{\hat p}$ with trivial intersection, then
$\{\G\cap U_i\}$ is a normal chain with trivial intersection of
$\Gamma$. Hence $\beta^{(2)}_1(\Gamma)\ge \ochi_2(\Gamma_{\hat
p},\F_p).$ \qed \end{proof}
Now we are ready to prove another criterion of goodness.
\begin{theorem}\label{betti0}
Let $\G$ be finitely presented group. Suppose that $\G$ is
virtually residually-$p$ group and $\beta_1^{(2)}(\Gamma)=0$.
Assume also that either

a) $\cd (\G)=2$ and it is of deficiency 1 or

b) $\G$ is an orientable Poincare duality group of dimension 3.

 Then $\Gamma$ is  $p$-good. Moreover, if $G$ is residually-$p$
 then $G$ is pro-$p$ good.
 \end{theorem}
 \begin{proof} Assume first that $\Gamma$ is residually-$p$ group. Let $U$ be an open subgroup of  $G_{\hat p}$. By the previous proposition  and \cite[Theorem 1.35 (9)]{Luek}, $$\df_p(U)-1=\ochi_2(U,\F_p)\le \beta_1^{(2)}(\G\cap  U)=[\G:(\G\cap U)]\beta_1^{(2)}(\Gamma)=0.$$

 If we are in the case a), then by Theorem \ref{gooddef1}, $\Gamma$ is pro-$p$ good and the same argument shows that any subgroup of $\Gamma$ of finite
 index is pro-$p$ good. Hence by Proposition \ref{p-good-pro}, $\Gamma$ is also
 $p$-good.

Assume now that we are in the case b). Then the same conclusion
follows from \cite[Theorem A and B]{KZ}.

 Now if $\Gamma$ is not residually-$p$, there is a subgroup of
 finite index of $\Gamma$ which is and since $p$-goodeness is
 preserved by overgroups of finite index (cf. Proposition
 \ref{prserre}) $\Gamma$ is $p$-good.

\qed \end{proof}
%%%%%%%%%%%%%%%%%%%%%%%%%%%%%%%%%
\subsection{Ascending HNN-extensions}\label{Ascending HNN-extensions}
%%%%%%%%%%%%%%%%%%%%%%%%%%%%%%%%%%

In this section we study mapping tori of injective endomorphisms
of free groups. For a free group $F_n:=\langle x_1,\ldots,
x_n\rangle$ ($n\in\N$) let $\phi : F_n\to F_n$ be an endomorphism.
The HNN-extension
\begin{equation}
M_\phi:=\langle \, x_1,\ldots, x_n, t \ \vrule \ t^{-1} x_i
t=\phi(x_i) \ {\rm for}\ i=1,\ldots n \ \rangle
\end{equation}
is traditionally called the mapping torus of $\phi$. Sometimes we
shall also say that $M_\phi$ is the ascending HNN-extension of
$\phi$ or of the free group $F_n$. Groups of this type often
appear in group theory and topology and were extensively studied
see \cite{FH}, \cite{BS}, \cite{IK} for example. In particular,
many one-relator groups are ascending HNN-extensions of free
groups and  many of such  groups are hyperbolic. Our methods are
applicable to the study of mapping tori because
\begin{itemize}
\item the group $M_\phi$ has positive deficiency, \item the
cohomological dimension of $M_\phi$ is less or equal to $2$.
\end{itemize}
The second property follows by an application of the
Mayer-Vietoris of cohomology while the first property is obvious.

Let us discuss a simple example. Let the endomorphism $\phi_1 :
F_1=\langle x\rangle\to F_1$ be given by $\phi_1(x):=x^2$. It is
elementary to see that the corresponding mapping torus is
metabelian and in fact isomorphic to a split extension
$$M_{\phi_1}=\langle x,t\mid
x^t=x^2\rangle\cong \Z\left[ \frac{1}{2}\right]\rtimes \Z.$$ Where
the generator $1$ of the infinite cyclic group acts by
multiplication by $2$ on the group $\Z[1/2]$ of rational numbers
with a $2$-power denominator. The commutator subgroup of
$M_{\phi_1}$ is equal to $\Z[1/2]$ appropriately embedded in
$M_{\phi_1}$. From here it is easy to detect the profinite
completion of
 $M_{\phi_1}$.
\begin{prop}\label{example}
We have
$$\widehat{M_{\phi_1}}= \widehat{\Z[1/2]}\rtimes \widehat{\Z}.$$
The profinite completion $\widehat{\Z[1/2]}$ is a projective but
not a free profinite group.
\end{prop}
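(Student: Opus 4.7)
The plan is to construct a continuous surjection $\widehat{M_{\phi_1}} \twoheadrightarrow \widehat{\Z[1/2]} \rtimes \widehat{\Z}$ from the universal property of profinite completion, and then to prove injectivity by analyzing the finite quotients of $M_{\phi_1}$ directly. First I would identify $\widehat{\Z[1/2]} \cong \prod_{p \neq 2} \Z_p$: the finite quotients of $\Z[1/2]$ are precisely the cyclic groups $\Z[1/2]/k\Z[1/2] \cong \Z/k\Z$ with $k$ odd (for even $k$ the quotient collapses since $2$ is a unit in $\Z[1/2]$), and the inverse limit over odd $k$ equals the ``odd part'' $\prod_{p \neq 2}\Z_p$ of $\widehat{\Z}$ by the Chinese remainder theorem. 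Because $2$ is a unit in $\Z_p$ for every odd $p$, multiplication by $2$ is a continuous automorphism of $\widehat{\Z[1/2]}$, and the $\Z$-action on $\Z[1/2]$ extends continuously to a $\widehat{\Z}$-action on $\widehat{\Z[1/2]}$. Thus $\widehat{\Z[1/2]} \rtimes \widehat{\Z}$ is a profinite group, and the inclusion $M_{\phi_1} = \Z[1/2] \rtimes \Z \hookrightarrow \widehat{\Z[1/2]} \rtimes \widehat{\Z}$ has dense image, producing the desired continuous surjection.

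The core of the argument is to show that every finite quotient $F$ of $M_{\phi_1}$ factors through some finite quotient of $\widehat{\Z[1/2]} \rtimes \widehat{\Z}$. Let $A$ denote the image of $\Z[1/2]$ in $F$, $\bar{x}$ the image of $x$, and $m$ the order of the image $\bar{t}$ of $t$ in the cyclic quotient $F/A$. Since every finite quotient of $\Z[1/2]$ is cyclic and $2$ acts invertibly, $A$ is cyclic of odd order and generated by $\bar{x}$. The crux is that $t^m$ lies in $A$, say $t^m = \bar{x}^k$; conjugation by $t$ acts on $A$ as multiplication by $2$, so
\[
\bar{x}^k \;=\; t^m \;=\; t \cdot t^m \cdot t^{-1} \;=\; t \cdot \bar{x}^k \cdot t^{-1} \;=\; \bar{x}^{2k},
\]
which forces $\bar{x}^k = 1$ and hence $t^m = 1$ in $F$. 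Consequently $F \cong A \rtimes \langle \bar{t}\rangle$ is a split extension with $A$ cyclic of odd order and $\langle\bar{t}\rangle$ cyclic of order $m$, and this finite semidirect product is manifestly a quotient of $\widehat{\Z[1/2]} \rtimes \widehat{\Z}$.

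For the remaining assertions about $\widehat{\Z[1/2]} \cong \prod_{p \neq 2}\Z_p$: the $p$-Sylow subgroup is $\Z_p$ for odd $p$ and trivial for $p=2$, and $\Z_p$ is free pro-$p$ of rank $1$; by the standard criterion (a profinite group is projective iff each Sylow subgroup is free pro-$p$) $\widehat{\Z[1/2]}$ is projective. It is not free, however: any free profinite group of rank $\geq 2$ is non-abelian, and the only pro-cyclic free profinite group is $\widehat{\Z} = \prod_p \Z_p$, which has non-trivial $2$-component, unlike $\widehat{\Z[1/2]}$. I expect the main obstacle to be the splitting step of the middle paragraph: although $H^2(\Z/m, A)$ vanishes automatically (the fixed subgroup $A^{\Z/m}$ is trivial because $2$ has no fixed point on the odd-order group $A$), the concrete factorization through $\widehat{\Z[1/2]} \rtimes \widehat{\Z}$ needs the explicit identity $t^m = 1$, which the short calculation above supplies cleanly.
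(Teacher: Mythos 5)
Your proposal is correct and follows the same route the paper takes, namely the identification $M_{\phi_1}\cong \Z[1/2]\rtimes\Z$, the computation $\widehat{\Z[1/2]}\cong\prod_{p\neq 2}\Z_p$, and projectivity via the Sylow subgroups being free pro-$p$ (with non-freeness because the group is abelian with trivial $2$-Sylow); the paper simply asserts that the profinite completion is "easy to detect," whereas you supply the missing verification through the finite-quotient analysis and the identity $t^m=1$. The only cosmetic point is the conjugation convention: with $x^t=t^{-1}xt=x^2$ one should conjugate by $t^{-1}$ in your displayed computation (or note that $t\bar{x}t^{-1}=\bar{x}^{2^{-1}}$ on the odd-order group $A$), but either way $\bar{x}^{k(c-1)}=1$ with $c-1$ a unit modulo $|A|$, so the conclusion stands.
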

In fact the isomorphism
$$\widehat{\Z[1/2]}\cong \prod_{p\ne 2} \Z_p$$
shows that all $p$-Sylow subgroups of $\widehat{\Z[1/2]}$ are free
pro-$p$ groups which implies that $\widehat{\Z[1/2]}$ is
projective. Our example also shows that the projectivity of
$[G,G]$ in Corollary \ref{cora} can not be replaced by freeness.

Corollary \ref{profinite} allows to establish the structure of the
profinite completion of this important class of groups.

\begin{theorem}\label{HNN}
Let $M_\phi$ be an ascending HNN-extension of a finitely generated
free group $F=F_n$ of rank $n\in\N$ with respect to an
endomorphism $\phi: F\to F$. Let $P$ be the closure of $F$ in the
profinite completion $\widehat M_\phi$. Then $P$ is normal in
$\widehat M_\phi$, the profinite completion of $M_\phi$ is
isomorphic to the split extension $\widehat M_\phi=P\rtimes
\widehat{\mathbb{Z}}$ and $P$ is a projective profinite group. The
group $P$ is free profinite of rank $n$ if and only if $\phi$ is
an automorphism.
\end{theorem}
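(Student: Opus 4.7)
The strategy is to identify $P$ as the kernel of a natural surjection $\widehat M_\phi\twoheadrightarrow \widehat \Z$, apply Corollary \ref{profinite} to conclude projectivity, and then analyze the rank statement via Hopfianness of free profinite groups together with M.~Hall's theorem on finitely generated subgroups of free groups.

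Let $\alpha:M_\phi\to\Z$ be the homomorphism sending $F$ to $0$ and $t$ to $1$, and let $\widehat\alpha$ be its continuous extension to $\widehat M_\phi$. The inclusion $P\subseteq\ker\widehat\alpha$ is clear. For the reverse, in any finite quotient $Q$ of $M_\phi$ the image $\bar F$ of $F$ is finite and the HNN relations give $\bar t^{-1}\bar F\bar t\subseteq \bar F$; a cardinality count forces $\bar t^{-1}\bar F\bar t=\bar F$, so $\bar F$ is normal in $Q$. Hence $t^kFt^{-k}\subseteq P$ for every $k\geq 0$, which yields $\ker\alpha=\bigcup_{k\geq 0}t^kFt^{-k}\subseteq P$; passing to closures gives $\ker\widehat\alpha\subseteq P$. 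Thus $P$ is normal in $\widehat M_\phi$ with $\widehat M_\phi/P\cong\widehat\Z$, and the extension splits because $\widehat\Z$ is free profinite of rank one; explicitly the closure of $\langle t\rangle$ in $\widehat M_\phi$ is isomorphic to $\widehat\Z$ and provides the splitting.

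Next I apply Corollary \ref{profinite}. The presentation of $M_\phi$ with $n+1$ generators and $n$ relations shows $\widehat M_\phi$ has positive deficiency. The normal subgroup $P$ is topologically generated by the images of $x_1,\dots,x_n$, hence finitely generated. The quotient $\widehat M_\phi/P\cong\widehat\Z$ has $p$-Sylow equal to $\Z_p$ for every $p$, which is infinite. Corollary \ref{profinite} therefore yields that $P$ is projective.

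For the rank statement, one direction is immediate: if $\phi$ is an automorphism then $F$ is normal in $M_\phi$, so $M_\phi=F\rtimes\Z$, $\widehat M_\phi=\widehat F\rtimes\widehat\Z$, and $P=\widehat F$ is free profinite of rank $n$. Conversely, suppose $P$ is free profinite of rank $n$. The inclusion $F\hookrightarrow P$ extends to a continuous surjection $\widehat F\twoheadrightarrow P$ between free profinite groups of rank $n$, which is an isomorphism by Hopfianness of finitely generated free profinite groups. Conjugation by $t$ is then a continuous automorphism $\psi$ of $P\cong\widehat F$, and the continuous extension $\widehat\phi$ of $\phi$ coincides with $\psi^{-1}$, so $\widehat\phi$ is an automorphism of $\widehat F$. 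The main remaining obstacle is to deduce that $\phi$ is itself an automorphism of $F$; I would invoke M.~Hall's theorem that every finitely generated subgroup of a free group is closed in its profinite topology: if $\phi$ were non-surjective, then $\phi(F)$ would be a proper finitely generated subgroup of $F$, hence not dense in $\widehat F$, contradicting surjectivity of $\widehat\phi$.
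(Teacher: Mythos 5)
Your proposal is correct and follows essentially the same route as the paper's proof: normality of $P$ via the equal-order argument for conjugate subgroups in finite quotients, projectivity from positive deficiency together with Corollary \ref{profinite}, and the rank statement from M.~Hall's theorem combined with Hopficity of finitely generated free profinite groups. The only difference is organizational — you run the converse direction directly through the automorphism $\widehat\phi$ of $\widehat F\cong P$, while the paper argues contrapositively that the profinite topology of $M_\phi$ cannot induce the full profinite topology on $F$ — but the underlying ingredients are identical.
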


\begin{proof}
Let $P$ be  the closure of the image of $F$ in $\widehat M_\phi$.
To see that $P$ is normal consider any finite quotient $H$ of
$M_\phi$. Then observe  that the images of $F$ and $F^{t}$ in $H$
coincide because in a finite group conjugate subgroups have to be
of the same order. Thus $\widehat M_\phi=P\rtimes
\widehat{\mathbb{Z}}$.

The group $M_\phi$ has positive deficiency  and therefore so has
its profinite completion  $\widehat M_\phi$. Corollary
\ref{profinite} implies that $P$ is projective.

If $\phi$ is an automorphism clearly $P=\widehat F$. Suppose now
that $\phi$ is not an isomorphism. Let $F_0$ be the image of
$\phi$ it is a finitely generated subgroup of $F$ distinct from
$F$. By a result of M. Hall $F_0$ is not dense in the profinite
completion of $F$, see \cite{LyS}, Proposition 3.10. To see that
$P$ is not free profinite of rank $n$, it suffices to show that
the profinite topology of $M_\phi$ does not induce the full
profinite topology on $F$. But this follows from the theorem of
Hall since as was just observed $F$ coincides with $F_0$ in every
finite image of $M_\phi$.\end{proof}

\begin{remark}\label{projectivity} \rm If $f$ is not an automorphism the
theorem in principal
allows that $P$ is a free profinite group of rank
less than $n$. It is, however, easy to give criteria when this does
not happen. Indeed, if $F$ admits a finite quotient $F/N$ modulo a
characteristic subgroup $N$ such that $d(F/N)=n$ and $FN=f(F)N$
then $d(P)=n$. So for instance, if $f(x_i)=x_i^p$ for $i=1,\ldots
n$ for some prime $p$ then $F/N$ can be taken to be elementary
$q$-group of rank $n$ where $q$ is coprime to $p$.\end{remark}

\begin{problem} Describe the projective groups $P$ obtained in this
manner. \end{problem}

The following interesting example  was communicated to us by I. Kapovich.

\begin{example}
Let $F:=F_3=\langle \, a,\, b,\, c\,\rangle$ be a free group of
rank $3$. Let the endomorphism $\phi:F\to F$ be given by
$$\phi(a)=a,\quad \phi(b)=a^{-1}ca, \quad \phi(c)=a^{-1}bab^{-1}.$$
Let
$$\G:=M_\phi=\langle a,b,c,t\ \vrule\  tat^{-1}=a,
tbt^{-1}=a^{-1}ca, tct^{-1}=a^{-1}bab^{-1}\rangle$$ be the corresponding
ascending HNN-extension.
Then $\phi(F)=\langle a,c, bab^{-1}\rangle$  is a proper subgroup of F, so that
$\G$ is a strictly ascending HNN-extension.

However, we can also rewrite the defining relations for $G$ as
follows:
$$a^{-1}ta=t,\quad
a^{-1}ca=tbt^{-1},\quad a^{-1}ba=tct^{-1}b.$$
Thus $G$ is an HNN-extension of $H:=\langle\, t,\, b,\, c\, \rangle$
with respect to
$\psi: H\to H$ where
$$\psi(t)=t, \quad \psi(c)=tbt^{-1} , \quad \psi(b)=tct^{-1}b$$
and with stable letter $a$.
We have $\psi(H)=\langle \, t,tbt^{-1}, tct^{-1}b\,
\rangle=\langle t,b,c \rangle=H$. Thus $\psi$ is an automorphism
of $H$ and hence $H$ is normal in $G$.
\end{example}

This example combined with Remark \ref{projectivity} shows that
the profinite completion $\widehat \G$ can be written as semidirect
product $P\rtimes \widehat \Z$ of a projective (non-free) finitely
generated profinite group and as a semidirect product $F\rtimes
\widehat\Z$ of a free profinite group of rank 3 and $\widehat\Z$.

%We note also that ascending HNN-extensions can be linear (see
%\cite{CaDu}).

The next result that comes as an application of Corollary
\ref{semi} is that   an ascending HNN-extension  is good.
\begin{theorem}\label{good} An ascending HNN-extension $\G$ of a
finitely generated free group is good and pro-$p$ good for every prime $p$.\end{theorem}

\begin{proof}
By Theorem \ref{HNN}, $\widehat{\G}\cong P\rtimes \widehat{\Z}$
where $P$ is finitely generated. Now  by Corollary
\ref{semi}, $\G$ is good and pro-$p$ good for every prime $p$. \end{proof}
\begin{remark} \rm We note that Lorensen
\cite{L} proved that if $M$ is a finite $\Z[\Gamma]$-module, then
$$
i^n(M): H^n(\widehat{\G} ,M)\longrightarrow H^n( \G,M)$$ is an isomorphism
for $n\le 2$. Thus, the goodness of $\G$ also follows directly from his result and our Theorem \ref{HNN}, because $\cd(\Gamma)=\cd(\widehat \Gamma)=2$. \end{remark}

We finish the subsection with a similar construction in the
profinite setting that not necessarily arises from the profinite
completion of the respective abstract construction, but gives  a
similar result. The proof is analogous to the above.

\begin{prop} Let $F(x_1,\ldots x_n)$ be a free profinite
group of finite rank $n$ and $f:F\longrightarrow F$ an
endomorphism. Form a profinite
 HNN-extension $G=\langle F,t\mid x_i^t=f(x_i)\rangle$.
Then  $G = P\rtimes \widehat{\mathbb{Z}}$, where $P$ is
projective. Here $P$ is the image of the natural homomorphism
$F\to G$.
\end{prop}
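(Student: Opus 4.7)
The plan is to mirror the proof of Theorem \ref{HNN} in the profinite setting. First, for any continuous surjection $\pi\colon G\to H$ onto a finite group, the defining relations give $\pi(t^{-1}Ft)\subseteq\pi(F)$; since these two finite sets have the same cardinality they coincide, so $\pi(F)$ is normal in $H$. Passing to the inverse limit, the closed image $P$ of $F$ in $G$ is normal in $G$. The universal property of the profinite HNN-extension then produces a continuous homomorphism $G\to\widehat{\mathbb{Z}}$ sending each $x_i$ to $0$ and $t$ to $1$, whose kernel is exactly $P$, so $G/P\cong\widehat{\mathbb{Z}}$. The closed subgroup of $G$ generated by $t$ supplies a splitting, yielding $G=P\rtimes\widehat{\mathbb{Z}}$.

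Next I would verify the hypotheses of Corollary \ref{profinite} for the pair $(G,P)$. The given presentation exhibits $G$ with $n+1$ topological generators and $n$ relators, so $\df(G)\ge 1>0$. The subgroup $P$ is topologically generated by the images of $x_1,\dots,x_n$, hence is finitely generated. Finally, for every prime $p$ the $p$-Sylow subgroup of $G/P\cong\widehat{\mathbb{Z}}$ is $\mathbb{Z}_p$, which is infinite; in particular this holds for every prime dividing $|P|$. Corollary \ref{profinite} then immediately yields that $P$ is projective, completing the proof.

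The only mildly delicate step is the identification $G/P\cong\widehat{\mathbb{Z}}$, which hinges on constructing the collapse map $G\to\widehat{\mathbb{Z}}$ via the universal property of the profinite HNN-extension; because both sides of each relation $x_i^t=f(x_i)$ lie in the kernel of the tentative collapse, the relations impose no constraint on the image of $t$, and the map is well defined. No new conceptual obstacle arises beyond Theorem \ref{HNN}, since the behavior of finite continuous quotients, deficiency, and finite generation of $P$ are formally the same as in the discrete setting once one invokes the universal property in the category of profinite groups.
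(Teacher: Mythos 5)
Your proposal is correct and follows exactly the route the paper intends: the paper gives no separate proof for this proposition, stating only that it is analogous to Theorem \ref{HNN}, and your argument (normality of $P$ via equal cardinality of $\pi(F)$ and $\pi(F)^{\pi(t)}$ in finite quotients, the collapse map onto $\widehat{\mathbb{Z}}$ giving the semidirect decomposition, positive deficiency from the $(n+1,n)$-presentation, and Corollary \ref{profinite} applied to the finitely generated normal subgroup $P$ with all Sylow subgroups of $G/P\cong\widehat{\mathbb{Z}}$ infinite) is precisely that adaptation. The extra care you take in constructing the quotient map via the universal property and identifying $G/P$ with $\widehat{\mathbb{Z}}$ is a welcome elaboration of a step the paper leaves implicit.
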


%%%%%%%%%%%%%%%%%%%%%%%%%%%%%%%%%%%
\subsection{More examples}
%%%%%%%%%%%%%%%%%%%%%%%%%%%%%%%%%%%

This section contains some more constructions for groups for which
our techniques are applicable.

Let $F=F(x_1,\ldots x_n, y_1\ldots y_m)$ be a free group of finite
rank $n+m$. Let
$$f_1:F(x_1\ldots x_n)\longrightarrow F(x_1\ldots
x_n), \quad f_2:F(y_1\ldots y_m)\longrightarrow F(y_1\ldots y_m)$$
 be  injective endomorphisms.

\begin{theorem}  Let
$G=\langle F,t\mid x_i^t=f_1(x_i),\ y_j^{t^{-1}}=f_2(y_j) \rangle$. Then
the profinite completion of $G$ is $\widehat G=P\rtimes
\widehat{\mathbb{Z}}$, where $P$ is projective. $P$ is free
profinite of rank $n+m$ if and only if $f_1,\, f_2$ are automorphisms.
\end{theorem}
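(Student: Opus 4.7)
The plan is to adapt the proof of Theorem \ref{HNN} to this two-sided situation, with $t^{-1}$ playing for $F(y_1,\ldots,y_m)$ the role $t$ plays for $F(x_1,\ldots,x_n)$. The given presentation has $n+m+1$ generators and $n+m$ relations, so $\df(G)\ge 1$, and the map $G\twoheadrightarrow\mathbb{Z}$ sending $t\mapsto 1$ and $x_i,y_j\mapsto 0$ is split by $1\mapsto t$.

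First I will show that the closure $P$ of $F=F(x_1,\ldots,x_n,y_1,\ldots,y_m)$ in $\widehat G$ is normal with quotient $\widehat{\mathbb{Z}}$. In any finite quotient $H$ of $G$ the relation $x_i^t=f_1(x_i)$ gives $\bar t^{-1}\,\overline{F(x_i)}\,\bar t\subseteq\overline{F(x_i)}$, which is forced to be equality by finiteness; so $\overline{F(x_i)}$ is $\bar t$-invariant. Symmetrically, $y_j^{t^{-1}}=f_2(y_j)$ forces $\overline{F(y_j)}$ to be $\bar t$-invariant. Hence the image of $F$ in $H$ is normal, and passing to the inverse limit gives $P\trianglelefteq\widehat G$ with $\widehat G/P\cong\widehat{\mathbb{Z}}$. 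The abstract section $1\mapsto t$ profinite-completes to yield $\widehat G=P\rtimes\widehat{\mathbb{Z}}$. Since $P$ is topologically generated by the $n+m$ images of the $x_i$ and $y_j$ and the $p$-Sylow subgroup of $\widehat{\mathbb{Z}}$ is $\mathbb{Z}_p$, which is infinite for every $p$, Corollary \ref{profinite} applies and $P$ is projective.

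It remains to characterize when $P$ is free profinite of rank $n+m$. If both $f_1$ and $f_2$ are automorphisms, the $x$-relations may be rewritten as $tx_it^{-1}=f_1^{-1}(x_i)$, so conjugation by $t$ acts as an automorphism of $F=F(x_i)*F(y_j)$ and $G=F\rtimes\langle t\rangle$; hence $\widehat G=\widehat F\rtimes\widehat{\mathbb{Z}}$ and $P=\widehat F$ is free profinite of rank $n+m$.

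The main obstacle is the converse, which I plan to handle exactly as in the proof of Theorem \ref{HNN}. Suppose, without loss of generality, that $f_1$ is not an automorphism. Being injective, its image $F_0=f_1(F(x_i))$ is a proper finitely generated subgroup of $F(x_i)$, hence by M.~Hall's theorem not dense in $\widehat{F(x_i)}$. But the normality argument already shows that in every finite image of $G$ the subgroups $F_0$ and $F(x_i)$ coincide, so the topology of $G$ induces on $F$ a topology strictly weaker than the full profinite topology, and the natural continuous surjection $\widehat F\to P$ has non-trivial kernel. If $P$ were free profinite of rank $n+m$, this would exhibit a non-injective continuous surjection between topologically finitely generated free profinite groups of equal rank, contradicting the Hopfian property of such groups.
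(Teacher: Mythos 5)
Your proof is correct and follows essentially the same route as the paper: normality of $P$ via the order argument in finite quotients, the splitting over $\widehat{\mathbb{Z}}$, and projectivity from positive deficiency together with Corollary \ref{profinite}. The paper's printed proof in fact stops after projectivity, and your last two paragraphs supply the intended rank criterion exactly along the lines of Theorem \ref{HNN} (automorphisms give $P=\widehat F$; otherwise M.~Hall's theorem shows the induced topology on $F$ is strictly weaker, with the Hopfian property of finitely generated profinite groups making explicit the final step the paper leaves implicit).
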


\begin{proof} The group $G$ has positive
deficiency  and therefore so is $\widehat G$.
Let $P$ be  the closure of the image of $F$ in $\widehat G$. To
see that $P$ is normal fix any finite quotient $\bar G$ of $G$ and
use bar to denote  the images in $\bar G$. Then one  observes
that $\bar F(x_1\ldots x_n)=\bar F(x_1\ldots x_n)^{\bar t}$ and
$\bar F(y_1\ldots y_m)=\bar F(y_1\ldots y_m)^{\bar t}$ because
finite conjugate groups have to be of the same order. Thus
$\widehat G=P\rtimes \widehat{\mathbb{Z}}$ and so by Corollary
\ref{profinite} $P$ is projective. \end{proof}

\begin{theorem}
Let $G=\langle F,t\mid x_i^t=f_1(x_i),\ y_j^{t^{-1}}=f_2(y_j)
\rangle$. Then $G$ is residually finite and good.
\end{theorem}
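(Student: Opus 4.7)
The overall strategy is to treat goodness and residual finiteness separately, both building on the structural result $\widehat G = P \rtimes \widehat{\mathbb{Z}}$ with $P$ projective from the previous theorem.

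For goodness I would closely follow the argument of Theorem \ref{good}. The structure of $\widehat G$ gives $\cd(\widehat G) \le \cd(P) + \cd(\widehat{\mathbb{Z}}) \le 2$. On the abstract side, $G$ is an HNN-extension of the free group $F$ with free associated subgroups $A = f_1(F(x_1,\ldots,x_n)) \ast F(y_1,\ldots,y_m)$ and $B = F(x_1,\ldots,x_n) \ast f_2(F(y_1,\ldots,y_m))$, and the corresponding Mayer--Vietoris long exact sequence in cohomology forces $\cd(G) \le 2$. The main result of \cite{L} then ensures that the natural map $H^n(\widehat G, M) \to H^n(G, M)$ is an isomorphism for $n = 1, 2$ and every finite $G$-module $M$; combined with the two cd bounds this propagates to an isomorphism in all degrees, which is goodness.

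For residual finiteness I would decompose $G$ as the amalgamated free product
$$G \;=\; G_1 \ast_{\langle t \rangle} G_2,$$
where $G_1 = \langle F(x_1,\ldots,x_n), t \mid x_i^t = f_1(x_i)\rangle$ and $G_2 = \langle F(y_1,\ldots,y_m), s \mid y_j^s = f_2(y_j)\rangle$ are ascending HNN-extensions of finitely generated free groups, glued via the identification $s \leftrightarrow t^{-1}$. Each $G_i$ is residually finite by the theorem of Borisov--Sapir, and Theorem \ref{HNN} gives $\widehat{G_i} = P_i \rtimes \widehat{\mathbb{Z}}$ with $P_i$ projective. I would then invoke G.~Baumslag's criterion: an amalgamated free product $G_1 \ast_H G_2$ of finitely generated residually finite groups is residually finite provided $H$ is closed in the profinite topology of each factor. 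It remains to verify that $\langle t\rangle$ is closed in each $G_i$, which one extracts from the canonical projection $G_i \to \widehat{G_i} \to \widehat{\mathbb{Z}}$ (sending $t$ to a topological generator) together with the split structure $\widehat{G_i} = P_i \rtimes \widehat{\mathbb{Z}}$.

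The main obstacle is this last closedness verification. Showing that $\langle t \rangle$ is profinitely closed in $G_i$ is strictly stronger than residual finiteness of $G_i$, since one must control how the vertex free group intersects the finite-index normal subgroups of $G_i$ across the ``depth'' induced by powers of $t$; in the ascending setting the vertex subgroup is not normal in $G_i$, only its profinite closure is. The goodness half, by contrast, is essentially a cohomological exercise once the cd bound for $\widehat G$ is in hand.
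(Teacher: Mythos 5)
Your goodness argument has a genuine gap at the appeal to \cite{L}. You are right that $G$ is an HNN-extension of $F=F_1\ast F_2$ with free associated subgroups $F_1\ast f_2(F_2)\cong f_1(F_1)\ast F_2$, and this does give $\cd(G)\le 2$; but the result of \cite{L} invoked in Theorem \ref{good} (Corollary 4.16 there) concerns \emph{ascending} HNN-extensions of free groups, i.e.\ the case where one associated subgroup is the whole base. Here that happens only when $f_1$ and $f_2$ are both automorphisms, so the isomorphism $H^2(\widehat G,M)\to H^2(G,M)$ does not follow from the cited statement. Nor can any unconditional theorem of this shape exist for HNN-extensions or amalgams of free groups over finitely generated subgroups: there are such amalgams which are not residually finite (even simple), and for those the degree-two comparison map fails; any applicable result must therefore carry hypotheses on the profinite topology (separability of the edge and vertex groups, efficiency of the decomposition), which your argument never verifies. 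This is precisely why the paper takes a different route: it writes $G=G_1\ast_{\langle t\rangle}G_2$ with $G_1,G_2$ ascending HNN-extensions of free groups, quotes Theorem \ref{good} for goodness of the factors, and then applies the goodness criterion for amalgamated products, Proposition 3.5 of \cite{GJZ}, whose topological hypotheses are exactly what the residual-finiteness discussion supplies. If you want to keep your ``direct'' approach, you must either prove the degree-two comparison by hand (comparing the Mayer--Vietoris sequences for $G$ acting on its Bass--Serre tree and $\widehat G$ acting on the corresponding profinite tree, which again needs efficiency), or fall back on the amalgam criterion as the paper does.

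The residual-finiteness half uses the paper's decomposition and correctly imports \cite{BS} for the factors, but the criterion you attribute to Baumslag is not a theorem as stated: closedness of the amalgamated subgroup in the profinite topology of each factor does not suffice. Indeed, finitely generated subgroups of free groups are closed by M.~Hall's theorem, yet there exist non-residually finite amalgams of two finitely generated free groups over finitely generated subgroups. The correct sufficient condition --- and, in substance, what the paper's reference \cite[Exercise 9.2.7]{RZ} encodes --- additionally requires compatible filtrations: for every $n$ one needs finite-index subgroups $U_i\le G_i$ with $U_1\cap\langle t\rangle=U_2\cap\langle t\rangle\le\langle t^n\rangle$. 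In your situation this is easy to arrange and also settles the closedness point you flag as the main obstacle: killing the free group gives a retraction $G_i\to\langle t\rangle$, a retract of a residually finite group is closed, and the preimages of $n\mathbb{Z}$ under these retractions intersect $\langle t\rangle$ in $\langle t^n\rangle$ on both sides. (Your suggested verification via $G_i\to\widehat{G_i}\to\widehat{\mathbb{Z}}$ alone is not enough, since an element of $Ft^k$ has the same image as $t^k$; it is the retraction, not the projection, that does the work.) With these repairs the residual-finiteness half is sound and matches the paper's argument.
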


\begin{proof}

Put $F_1=F(x_1\ldots, x_n)$ and $F_2=F(y_1\ldots y_m)$. The group
$G$ can be represented as an amalgamated free product of ascending
HNN-extensions $G_1*_Z G_2$, where $G_1=\langle F_1,t\mid
x_i^t=f_1(x_i)\rangle$, $G_2=\langle F_2,t\mid
y_j^{t^{-1}}=f_2(y_j)\rangle$ and $Z=\langle t\rangle$. Then by
Exercise 9.2.7 in \cite{RZ} $G$ is residually finite and by
Proposition 3.5 in \cite{GJZ} combined with Theorem \ref{good} is
good.
\end{proof}

%%%%%%%%%%%%%%%%%%%%%%%%%%%%%%%%%%%%%%%%%%%%%%%%%%%%%%%
\subsection{Implications for the congruence kernel}
%%%%%%%%%%%%%%%%%%%%%%%%%%%%%%%%%%%%%%%%%%%%%%%%%%%%%%%

Here we give some results relating to the structure of the
congruence kernels of certain arithmetic groups.

Recall that a lattice in $\SL_2(\CC)$ is a discrete group
(Kleinian group) of finite covolume. One particular family of
lattices is a family of arithmetic groups. We recall the
definition of an arithmetic group in this case; see \cite{EGM},
\cite{MaRe} for more details. Let $k$ be a number field with
exactly one pair of complex places and let $A$ be a quaternion
algebra over $k$ which is ramified at all real places. Let $\rho$
be a $k$-embedding of $A$ into the algebra ${\rm M}_2(\CC)$ of two
by two matrices over $\CC$ (using one of the complex places). Let
$\cal O$ be the ring of integers of $k$ and let $\cal R$ be a
$\cal O$-order of $A$. Let $A^1({\cal R})$ be the corresponding
group of elements of norm one. It is well known that
$\rho(A^1({\cal R}))$ is a lattice in $\SL_2(\CC)$. Then a
subgroup $\Gamma$ of $\SL_2(\CC)$ is an arithmetic Kleinian group
if it is commensurable with some such a $\rho(A^1({\cal R}))$
(groups are commensurable if they have $\SL_2(\CC)$-conjugate
subgroups of finite index). The quotient $\SL_2(\CC)/
\rho(A^1({\cal R}))$ is not compact if $k$ is an imaginary
quadratic number field and if $A={\rm M}_2$.

To define the congruence kernels let us (without loss of
generality) in the following consider the arithmetic Kleinian
group $\Gamma=\rho(A^1({\cal R}))$. The congruence
 kernel ${\bf C}(A,{\cal R})$
is the kernel of the canonical map from the profinite completion
$\widehat {\Gamma} $ of $\Gamma$ to $\rho(A^1(\widehat{\cal R}))$.
Here $\widehat{\cal R}$ stands for the profinite completion of the
ring $\cal R$. The congruence subgroup  problem (in general, for
arithmetic groups) asks whether the congruence kernel is trivial.
If the congruence kernel is finite, i.e. the congruence subgroup
problem has almost positive solution, one says that $\Gamma$ has a
congruence subgroup property. It is  proved by Lubotzky \cite{Lu2}
that the congruence kernels ${\bf C}(A,{\cal R})$ of the
arithmetic lattices in $\SL_2(\CC)$ are infinite. But for some of
the arithmetic Kleinian groups, like for example for the
$\SL_2({\cal O})$ ($\cal O$ the ring of integers in some imaginary
quadratic number field), some further information has been
obtained. For these arithmetic Kleinian groups there are subgroups
of finite index which map onto non-abelian free groups. For the
$\SL_2({\cal O})$ this was proved in \cite{GS}, many more cases
are treated in \cite{Lu3}. The fact that $\Gamma$ has a subgroup
of finite index which maps onto non-abelian free group, leads to
an embedding of the free profinite group on countably many
generators $\widehat F_\omega$ into the corresponding congruence
kernel (see \cite{Lu4}).

Led by a result of Melnikov \cite{Mel} in the case
$\Gamma=\SL_2(\mathbb{Z})$ we ask:
\begin{question}\label{melni}
Is the congruence kernel of an arithmetic Kleinian group
isomorphic to $\widehat F_\omega$? Or more vaguely, what can be
said about the congruence kernel in this case?
\end{question}
Of course the answer to question \ref{melni} is negative if the
cohomological dimension of the congruence kernel is not one. We
shall describe in the following an interesting connection between
question \ref{melni} and certain cohomological problems.

In \cite{LoLu} it is proved that if $\Gamma$ is a lattice in
$\SL_2(\CC)$, then for any chain of normal subgroups $\Gamma_i$ of
finite index of $\Gamma$
  with trivial intersection the numbers
$$\frac{\dim H^1(\Gamma_i,\Q)}{[\Gamma:\Gamma_i]}$$
tend to zero when $i$ tends to infinity (this means that $\beta_1^{(2)}(\Gamma)=0$). Let us formulate the
following analoguous problem for the dimensions of the first
cohomology groups over $\F_p$.

\begin{question}\label{pgradient} Let $\Gamma$ be an arithmetic
Kleinian group and $p$ a prime number. Do the numbers
$$\frac{\dim H^1(\Gamma_i,\F_p)}{[\Gamma:\Gamma_i]}$$
tend to zero when $i$ tends to infinity for any chain of  normal $p$-power index
subgroups $\Gamma_i$ of $\Gamma$  with trivial intersection.
\end{question}
There is an interesting case studied by Calegari, Dunfield
\cite{CaDu1} and Boston, Ellenberg \cite{BE} where the answer to
Question \ref{pgradient} is positive. The paper \cite{CaDu1}
contains the description of a subgroup $\Gamma_{CD}\le \SL_2(\CC)$
with the following properties (proofs in \cite{CaDu1}, \cite{BE}).
\begin{itemize}
\item $\Gamma_{CD}$ is a cocompact arithmetic Kleinian lattice in
$\SL_2(\CC)$,  \item the pro-3 completion of $\Gamma_{CD}$ is
analytic, \item $\dim H^1(\Delta,\F_3)=3$ for any normal subgroup
of 3-power index in $\Gamma_{CD}$.
\end{itemize}
We shall now describe a connection beween the problems posed in
Questions \ref{melni} and \ref{pgradient}. Our methods are
applicable since the discrete subgroups of $\SL_2(\CC)$ have a
very restrictive structure. If $\Gamma$ is a finitely generated
torsion free discrete subgroup of $\SL_2(\CC)$, then the
deficiency of $\Gamma$ is 0 or 1 depending whether the quotient
space $\SL_2(\CC)/\Gamma$ is compact or not. Moreover, if
$\SL_2(\CC)/\Gamma$ is compact, then $\Gamma$ is Poincar\'e
duality groups of dimension 3 and if $\SL_2(\CC)/\Gamma$ is not compact then $\Gamma$ is of cohomological dimension 2. First using Theorem \ref{betti0}, we prove that the arithmetic lattices are good. For the Bianchi groups it was shown in \cite{GJZ}.

\begin{theorem}\label{arithgood} Let $\Gamma$ be an arithmetic Kleinian group in $\SL_2(\CC)$. Then $\Gamma$ is  good and pro-$p$ good for every prime $p$.
\end{theorem}
\begin{proof}
Notice that for any prime $p$ an arithmetic lattice in $\SL_2(\CC)$ is  a virtually residually-$p$ group. Now the result follows from Theorem \ref{betti0}, because $\beta_1^{(2)}(\G)=0$.
\end{proof}
\begin{remark}\rm
 Using a recent deep result of D. Wise \cite{Wi} it is also possible to prove goodness for all fundamental groups of 3-manifolds. We give a sketch of the argument.

Let $\G=\pi_1(M)$ be the fundamental group of a 3-manifold $M$ (possibly with a boundary). Without loss of generality we may assume that the boundary of $M$ is incompressible and $M$ is irreducible, because a free product of good groups is also good. From the  Geometrization Conjecture proved by Perelman, it follows that we can cut $M$ along a finite collection of incompressible tori so that the resulting pieces $\{M_i\}$ are geometric. Wilton and Zalesski \cite{WZ} proved that $\pi_1(M)$ is good if  $\pi_1(M_i)$ are good. Also they proved that $\pi_1(M_i)$ is good if $M_i$ is a Seifert manifold and  it is clear that  $\pi_1(M_i)$ is goodif $M_i$ is a solvmanifold. Thus, we have to show that $\pi_1(M_i)$ is good when $M_i$ is hyperbolic. Hence assume that $M$ is hyperbolic. In the case when $M$ is not virtually Haken the goodness of $\pi_1(M)$ was first observed by Reznikov \cite{Re} (see also \cite{KZ,We}). If $M$ is virtually Haken, then by \cite{Wi} and \cite{Al}, $M$ is virtually fibered over circle and so $\pi_1(M)$ is also good.\end{remark}

Now we are ready to prove our main result.
\begin{theorem}\label{congr} Let $\Gamma$ be an arithmetic Kleinian group in
$\SL_2(\CC)$ and $p$ be a prime number. If the answer to the
Question \ref{pgradient} is positive for all subgroups of $\Gamma$
of finite index, then
   the $p$-cohomological
dimension of the congruence kernel of $\Gamma$ is 1.
\end{theorem}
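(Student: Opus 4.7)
The plan is to deduce both claims from the structural theory of Sections 3 and 4 applied to the profinite and pro-$p$ completions of $\Gamma$ and of its finite-index subgroups. After passing to a torsion-free subgroup of finite index, $\Gamma$ is either a discrete $PD^3$-group (cocompact case) or a group of deficiency $1$ and $\cd(\Gamma)=2$ (non-cocompact case). The first preparatory step is to reinterpret the hypothesis on Question \ref{pgradient}, applied to every finite-index subgroup $U \leq \Gamma$, as a Lackenby-type subexponential subgroup growth statement for $U_{\hat p}$ via Proposition \ref{subexp}, using that $\dim H^1(U_i,\F_p) = d((U_i)_{\hat p})$. Combined with the non-negative deficiency of $U$ and the bound $\cd(U)\le 3$, the main theorem of Section 3.4 together with Corollary \ref{def1prop} then forces $\cd_p(U_{\hat p}) \le \cd(U)$ and supplies strong vanishing of $H^2$ on suitable normal subgroups of $U_{\hat p}$.

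For part (1), with these vanishing statements in hand I would run the five-term exact sequence of Corollary \ref{consLHS}(1) for the natural surjection $U \to \widehat U$ to obtain the degree-$2$ comparison $H^2(\widehat U, \F_p) \cong H^2(U, \F_p)$, then bootstrap to arbitrary finite $p$-primary coefficients using Theorem \ref{critfp}. The top degree (relevant only in the cocompact case) is handled by a computation of the dualizing module $H^3(\widehat U, \Z_p[[\widehat U]])$ via Lemma \ref{commuting}, combined with the discrete Poincar\'e duality of $U$, to show that $\widehat U$ is a profinite $PD^3$-group at $p$. Once this is available for every finite-index $U\le\Gamma$, $p$-goodness of $\Gamma$ follows in the standard way.

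For part (2), once $p$-goodness is established, $\cd_p(\widehat\Gamma)=\cd(\Gamma)\le 3$. To prove $\cd_p(C)\le 1$, I would fix an open subgroup $V\le C$ and an open subgroup $W\le\widehat\Gamma$ with $W\cap C = V$, and analyze the Lyndon-Hochschild-Serre spectral sequence
$$E_2^{r,s}=H^r(W/V,H^s(V,\F_p))\Longrightarrow H^{r+s}(W,\F_p).$$
Since $W/V$ is an open subgroup of the $p$-adic analytic group $\overline\Gamma$, Lazard's theorem makes it a profinite $PD^d$-group at $p$ with $d$ strictly larger than $\cd_p(W)\le 3$. If $H^s(V,\F_p)\neq 0$ for some minimal $s\ge 2$, Poincar\'e duality applied to the top row $E_2^{d,s}$ together with the minimality of $s$ controls the incoming and outgoing differentials and forces a nonzero surviving class in $H^{d+s}(W,\F_p)$, contradicting $d+s>\cd_p(W)$. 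Thus $\cd_p(C)\le 1$, and the reverse inequality follows from Lubotzky's embedding $\widehat F_\omega\hookrightarrow C$ of \cite{Lu4}. The main obstacle is precisely this spectral-sequence bookkeeping: the minimality of $s$ must be leveraged carefully so that the differentials leaving $E_r^{d,s}$ (which vanish on dimensional grounds for $W/V$) and those entering it (which involve lower $r$-columns and $H^{s+r-1}(V,\F_p)$ terms) do not collectively kill the class supplied by Poincar\'e duality; this is analogous to the arguments underpinning Bieri--Eckmann duality in the inverse-limit setting, and I would mimic those here.
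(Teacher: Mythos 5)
Your plan for part (2) has a fatal structural flaw: it never uses the hypothesis on Question \ref{pgradient}. If your spectral-sequence argument worked, it would show unconditionally that $\cd_p({\bf C})=1$ for every arithmetic Kleinian group, which is much stronger than the theorem and is precisely what is \emph{not} available; in the paper the hypothesis is the engine of part (2). Concretely, two steps fail. First, $\widehat\Gamma/{\bf C}$ is the congruence completion, an open subgroup of $A^1(\widehat{\cal R})=\prod_{\mathfrak q}A^1({\cal R}_{\mathfrak q})$ over \emph{all} finite places; it is not $p$-adic analytic, it has $p$-torsion coming from places of residue characteristic prime to $p$, and hence it is not a $PD^d$-group at $p$ (its $\cd_p$ is infinite in general). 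The paper avoids this by interposing a normal subgroup $N\supseteq{\bf C}$ with $G/N$ $p$-adic analytic and (after shrinking $\Gamma$) pro-$p$. Second, even granting a $PD^d$ quotient, a nonzero $H^s(V,\F_p)$ does not force a surviving class: $E_2^{d,s}=H^d(W/V,H^s(V,\F_p))$ is a twisted-coinvariants-type group which can vanish for a nonzero coefficient module, and the incoming differentials $d_r\colon E_r^{d-r,s+r-1}\to E_r^{d,s}$ involve $H^{s+r-1}(V,\F_p)$ with $s+r-1>s$, which the minimality of $s$ does not control; the ``bookkeeping'' you defer is exactly the missing argument. The paper's mechanism is different: if $H^2({\bf C},\F_p)\neq 0$, a direct-limit argument produces $N\le H\unlhd_o G$ with $H^2(N,\F_p)\neq 0$, and then Theorems \ref{def1} and \ref{def0} (applied in contrapositive) yield a constant $c>0$ with $\dim H^1(\Gamma\cap H,\F_p)=d_p(H,N)\ge c[\Gamma:\Gamma\cap H]$ for all such $H$, contradicting the vanishing $p$-gradient hypothesis.

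Part (1) also has gaps. The five-term sequence of Corollary \ref{consLHS}(1) concerns a closed normal subgroup of a profinite group and cannot be applied to the map $U\to\widehat U$ from a discrete group to its profinite completion, so your degree-$2$ comparison is not obtained this way; Theorem \ref{critfp} is a criterion for type $p$-$FP_m$ and does not bootstrap a comparison isomorphism from $\F_p$ to all finite $p$-primary coefficients; and proving that the relevant completion is $PD^3$ at $p$ is exactly the nontrivial input the paper imports rather than re-proves. The paper's route is: the hypothesis together with Proposition \ref{def2} shows every open subgroup of $\Gamma_{\hat p}$ has deficiency at most $1$; then \cite[Corollary 4.3]{KZ} makes $\Gamma_{\hat p}$ a pro-$p$ $PD^3$-group, \cite[Theorem A]{KZ} gives pro-$p$ goodness, and \cite[Proposition 3.1]{We} upgrades this to $p$-goodness, while the non-cocompact case is quoted from \cite{GJZ}. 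Your Lackenby-type reformulation via Proposition \ref{subexp} is neither needed nor quite correct as stated (the hypothesis concerns chains of normal subgroups with trivial intersection, whereas subexponential growth requires control over all open subgroups), and Corollary \ref{def1prop} by itself does not produce the duality statement you need. So both halves require the ideas you replaced to be reinstated.
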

\begin{proof}  By Theorem \ref{arithgood}, $\G$ is $p$-good and so $G=\widehat \Gamma$ is either a Poincar\'e duality profinite group at $p$
of dimension 3 or has $p$-deficiency 1.

Let $\bf C$ denote the congruence kernel corresponding to
$\Gamma$. Suppose that  the $p$-cohomological dimension of $\bf C$ is not 1.
Then by Proposition \ref{projdim} there exists an open subgroup
$C_0$ of $\bf C$ with $H^2(C_0,\F_p)\ne 0$. Further,
there exists  a subgroup $\Gamma_0$ of finite index in $\Gamma$
which congruence kernel is equal to $C_0$. Hence without loss of
generality we may assume that ${\bf C}=C_0$ and $\Gamma=\Gamma_0$.

Note that
$$H^2({\bf C},\F_p)=\varinjlim_{{\bf C}\le
H\triangleleft_oG}  H^2(H, \F_p).$$  Hence there exist a subgroup ${\bf C}\le
H\triangleleft_o G$ such that the image of the restriction map
$H^2(H,\F_p)\to H^2({\bf C},\F_p)$ is not trivial. Note that
$H\cap \Gamma$ is a congruence subgroup of $\Gamma$ and
$H\cong\widehat{H\cap \Gamma}$. Hence without loss of generality
we may also assume that $H=G$.

Since $\Gamma$ is an arithmetic Kleinian subgroup of $\SL_2(\CC)$
there exists a normal subgroup $N$ of $G$ such that $C\le N$ and
$G/N$ is $p$-adic analytic group, because every finitely generated subgroup of the congrunce completion is $p$-adic analytic (see Window 9 \cite{LS}). Moreover $\Gamma$ is embedded in
$G/N$. Replacing $\Gamma$ by one of its congruence subgroups we
may also assume that  $G/N$ is a pro-$p$ group.

Note that the image of compositions of two restriction maps
$$H^2(G,\F_p)\to H^2(N,\F_p)\to H^2({\bf C},\F_p)$$
is not zero. Hence $H^2(N,\F_p)\ne 0$. Thus, we may apply Theorem
\ref{def1} and Theorem \ref{def0} and obtain that there exists
$c>0$ such that for any open subgroup $N\le H\le_o G$
$$\dim H^1(\Gamma\cap H,\F_p)= d_{p}( H,N)\ge
c[G:H]=c[\Gamma:(\Gamma\cap H)].$$ But this contradicts  the
assumption that the answer to Question \ref{pgradient} is positive
for $\Gamma$.
\end{proof}

%%%%%%%%%%%%%%%%%%%%%%%%%%%%%%%%%%%%

\bigskip
{\it Author's Adresses:}

\medskip
Andrei Jaikin-Zapirain\\
Departamento de Matematicas, Facultad de Ciencias,\\
Universidad Autonoma de Madrid,\\
Cantoblanco Ciudad Universitaria,\\
28049 Madrid\\
Spain\\
and:
Instituto de Ciencias Matematicas-CSIC, UAM, UCM,UC3M

\medskip
Aline G.S. Pinto\\
Departamento de Matem\'atica,\\
~Universidade de Bras\'\i lia,\\
70910-900 Bras\'\i lia DF,\\
Brazil

\medskip
Pavel A. Zalesski\\
Departamento de Matem\'atica,\\
~Universidade de Bras\'\i lia,\\
70910-900 Bras\'\i lia DF\\
Brazil


\begin{thebibliography}{999}
%%%%%%%%%%%%%%%%%%%%%%%%%%%%%%%%%%%%


\bibitem{Al} I. Agol, {\em Criteria for virtual fibering.}  J. Topol., \textbf{1} (2008), 269Ð284.

\bibitem{Bieri} R. Bieri,  {\em Deficiency and the geometric
invariants of a group. With an appendix by Pascal Schweitzer}, J.
Pure Appl. Algebra  \textbf{208}  (2007) 951--959.

%\bibitem{B2} R.Bieri, {\em On groups of cohomology dimension $2$},
%Topology and algebra (Proc. Colloq., Eidgenöss. Tech. Hochsch.,
%Z\"urich, 1977),  pp. 55--62, Monograph. Enseign. Math.,
%\textbf{26}, Univ. Gen\`eve, Geneva, 1978.

\bibitem{BS} A. Borisov and M. Sapir,
{\em Polynomial maps over finite fields and residual finiteness of
mapping tori of group endomorphisms.} Invent. Math. \textbf{160} (2005),
341-356.

\bibitem{BE}
N. Boston, J. Ellenberg, {\it Pro-$p$ groups and towers of
rational homology spheres.} Geom. Topol. {\bf 10} (2006),
331--334.

%\bibitem{Brumer} A. Brumer,
%{\em Pseudocompact algebras, profinite groups and class
%formations}, J. Algebra \textbf{4} (1966), 442--470.

%\bibitem{CaDu}
%D. Calegari, N.M. Dunfield, {\em An ascending HNN extension of a
%free group inside ${\rm SL}\sb 2\Bbb C$.} Proc. Amer. Math. Soc.
%\textbf{134},  (2006),  no. 11, 3131--3136.

\bibitem{CaDu1}
F. Calegari, N.M. Dunfield, {\it Automorphic forms and rational
homology 3-spheres.} Geom. Topol.  {\bf 10}  (2006), 295--329.

\bibitem{DDMS} J.  Dixon,  M. du Sautoy, A. Mann, D. Segal,
{\em Analytic pro-$p$ groups}.  Cambridge University Press,
Cambridge 1999.

%\bibitem{D} A. Dold, {\em Lectures on algebraic topology.}
%Springer-Verlag 1972.

%\bibitem{DL} D. Dummit and J. Labute, {\em On a new characterization of
%Demushkin groups.} Invent. Math. \textbf{73} (1983) 413-418.

\bibitem{EGM}
J. Elstrodt, F. Grunewald, J. Mennicke, {\em Groups acting on
hyperbolic space. Harmonic analysis and number theory. } Springer
Monographs in Mathematics. Springer-Verlag, Berlin 1998.

\bibitem{FH} M. Feighn, M. Handel, {\em Mapping tori of free group
automorphisms are coherent.} Ann. Math \textbf{149} (1999),
1061--1077.


\bibitem{GJZ} F. Grunewald, A. Jaikin-Zapirain, P.A. Zalesskii
{\em Cohomological goodness and the profinite completion of
Bianchi groups.}  Duke Math. J.  \textbf{144}  (2008), 53--72.

\bibitem{GS}
F. Grunewald,  J. Schwermer, {\it Free nonabelian quotients of
${\rm SL}\sb{2}$ over orders of imaginary quadratic numberfields.}
J. Algebra  {\bf 69}  (1981), no. 2, 298--304.


\bibitem{HW} J. Hempel, W. Jaco,
Fundamental groups of $3$-manifolds which are extensions. Ann. of
Math. (2) \textbf{95} (1972) 86--98.

\bibitem{Hilman} J. Hillman, {\em Three-dimensional Poincar\'e
duality groups wich are extensions}, Math. Z. {\bf 195} (1987),
89--92.

%\bibitem{Hillman1} J. Hillman, {\em Four-Manifolds, geometries and
%knots} Geometry and topology publications, University of Warwick
%2002. Revision 2007.

\bibitem{HS} J. Hillman, A. Schmidt, {\em Pro-$p$ groups of positive
deficiency}, Bulletin of LMS {\bf 40} (2008), 1065--1069.

\bibitem{IK} I. Kapovich, {\em Mapping tori of endomorphisms of free groups.}
Communications in Algebra, \textbf{28} (2000), 2895--2917.

\bibitem{Khukhro} E.I. Khukhro,
{\it Nilpotent groups and their automorphisms.}
de Gruyter Expositions in Mathematics, 8. Walter de Gruyter and
Co., Berlin, 1993, 252 pp.

%\bibitem{Ko} T. Kobeda, {\em Residual properties of 3-manifold groups I: fiberd and hyperbolic 3-manifolds},  arXiv:0910.2035v2.


%\bibitem{K1} D. Kochloukova, {\em On a conjecture of E. Rapaport Strasser
%about knot-like groups and its pro-$p$ version}, J. Pure Appl.
%Algebra \textbf{204} (2006), 536--554.


\bibitem{K2} D. Kochloukova, {\em Some Novikov rings that are von
Neumann finite and knot-like groups}, Comm. Math Helvetici { \bf
81} (2006),  931--943.


\bibitem{KZ} D. Kochloukova, P. Zalesskii, {\em Profinite completions
of Poincar\'e duality groups of dimension 3}. Transactions of the
American Mathematical Society \textbf{360} (2008) 1927-1949.


\bibitem{Lac} M. Lackenby, {\em Large groups, Property (tau) and the
homology growth of subgroups}, Math. Proc. Camb. Phil. Soc. {\bf 146} (2009), 625--648 .

%\bibitem{LS1} P. Linnell, Schick, T.
%{\it Finite group extensions and the Atiyah conjecture}, J. Amer.
%Math. Soc. {\bf 20} (2007), no. 4, 1003--1051.


\bibitem{L} K. Lorensen, {\em Groups with the same cohomology as
their profinite completions},  J. Algebra {\bf 320} (2008), 1704--1722.

\bibitem{LoLu} J. Lott, W. L\"uck, {\em $L^2$-topological invariants of
3-manifolds.} Invent. Math. \textbf{120}, (1995), 15-60.

\bibitem{Lu4}
A. Lubotzky, {\it Free quotients and the congruence kernel of
${\rm SL}\sb{2}$.} J. Algebra \textbf{77} (1982), 411--418.


\bibitem{Lu2} A. Lubotzky,
{\em Group presentation, $p$-adic analytic groups and lattices in
${\rm SL}\sb{2}(C)$}. Ann. of Math. (2) \textbf{ 118} (1983),
115--130.

\bibitem{Lu3}
A. Lubotzky, {\it Free quotients and the first Betti number of
some hyperbolic manifolds.} Transform. Groups  {\bf 1}  (1996),
71--82.


\bibitem{Lu} A. Lubotzky, {\em Pro-finite presentations.}
J. Algebra  \textbf{242},  (2001), 672--690.


\bibitem{LS} A. Lubotzky, D. Segal, {\em Subgroup growth}, Progress in
Mathematics, \textbf{212}. Birkh\"auser Verlag, Basel, 2003.

\bibitem{Luek}
W. L\"uck, {\it $L\sp 2$-invariants: theory and applications to
geometry and $K$-theory.} Ergebnisse der Mathematik und ihrer
Grenzgebiete. 3. Folge. {\bf 44}. Springer-Verlag, Berlin, 2002.
xvi+595 pp.


\bibitem{LyS} R.C. Lyndon, P.E. Schupp, {\it Combinatorial group theory.}
Ergebnisse der Mathematik und ihrer Grenzgebiete, Band
\textbf{89}, Springer-Verlag, Berlin-New York, 1977.

\bibitem{MaRe}
C. Maclachlan, A.W. Reid, {\it The arithmetic of hyperbolic
3-manifolds.} Graduate Texts in Mathematics, {\bf 219}.
Springer-Verlag, New York, 2003.

\bibitem{Mel} O.V. Melnikov,
{\it Congruence kernel of the group ${\rm SL}\sb{2}(Z)$.}
(Russian) Dokl. Akad. Nauk SSSR \textbf {228} (1976), 1034--1036.


\bibitem{NSW} J. Neukirch, A. Schmidt, K. Wingberg, {\em Cohomology of number
fields}, Grundlehren der Mathematischen Wissenschaften,
\textbf{323}. Springer-Verlag, Berlin, 2000.


\bibitem{rapap} E. Rapaport-Strasser, {\em Knot-like groups.}
in Knots, groups and 3-manifolds, in: L.P. Neuwirt (Ed.), Annals
of Mathematical Study, vol. 84, Princeton University Press,
Princeton, NJ, 1975, pp. 119--133.

\bibitem{Re} A. Reznikov, {\em Three-manifolds class field theory (homology of coverings for a nonvirtually $b_1$-positive manifold)}.  Selecta Math. (N.S.)  \textbf{3}  (1997),   361--399.

\bibitem{RZ} L. Ribes, P. Zalesskii,  {\em Profinite groups.}
Springer-Verlag, Berlin, 2000.


%\bibitem{Scott} G.P. Scott,
%{\em Finitely generated $3$-manifold groups are finitely
%presented.} J. London Math. Soc., \textbf{(2) 6}, (1973),
%437--440.



\bibitem{Serre} J-P. Serre, {\em Galois cohomology.}
Berlin, Springer-Verlag, 1997.

%\bibitem{St} J.R. Stallings, {\em On torsion-free groups with
%infinitely many ends.} Ann. of Math. (2) \textbf{88}, (1968),
%312--334.


\bibitem{SW} P. Symonds and T. Weigel, {\em Cohomology
of $p$-adic analytic groups. New horizons in pro-$p$ groups},
Progr. Math.  \textbf{184}, 349--410. Birkh\"auser Boston, Boston,
MA, 2000.

\bibitem{We} T. Weigel, {\em On profinite groups with finite
abelinization.} Selecta Mathematica \textbf{13}, (2007), 175--181.


\bibitem{WZ} T. Weigel, P. Zalesski, {\em Profinite groups of
finite cohomological dimension.} C. R. Acad. Sci. Paris, Ser. I
\textbf{338} (2004), 353--358.

\bibitem{Wi}   D. T. Wise, {\em Research announcement: the structure of groups with a quasiconvex
hierarchy.} Electronic research announcements in mathematical sciences, \textbf{16} (2009), 44--55.


 %\bibitem{Z}
%E. I. Zelmanov, \emph{On the restricted Burnside problem.}
%Proceedings of the International Congress of Mathematicians, Vol.
%I, II (Kyoto, 1990), 395--402, Math. Soc. Japan, Tokyo, 1991.


%\bibitem{Zel} E. Zelmanov, On groups satisfying the Golod-Shafarevich
%condition. New horizons in pro-$p$ groups, 223--232, Progr. Math.,
%184, Birkh�user Boston, Boston, MA, 2000.

\end{thebibliography}
\end{document}